\numberwithin{equation}{section}
\theoremstyle{plain}
\newtheorem{definition}{Definition}[section]
\newtheorem{proposition}[definition]{Proposition}
\newtheorem{theorem}[definition]{Theorem}
\newtheorem{corollary}[definition]{Corollary}
\newtheorem{lemma}[definition]{Lemma}
\newtheorem{example}{Example}
\newtheorem{remark}{Remark}
\def\argmax{\mathop{\rm argmax}}
\def\argmin{\mathop{\rm argmin}}
\def\Ker{\mathop{\rm Ker}}
\def\dim{\mathop{\rm dim}}
\newcommand{\bR}{\mathbb{R}}
\newcommand{\cX}{{\cal X}}
\newcommand{\cY}{{\cal Y}}
\def\cY{{\cal Y}}
\def\rank{{\rm rank}}
\newcommand{\lleq}{\mathrel{\mathpalette\gl@align<}}
\newcommand{\ggeq}{\mathrel{\mathpalette\gl@align>}}
\newcommand{\gl@align}[2]{
\vbox{\baselineskip\z@skip\lineskip\z@
\ialign{$\m@th#1\hfil##\hfil$\crcr#2\crcrV_{\vec{\theta}}_{{}_{(=)}}\crcr}}}
\def\Label#1{\label{#1}\ [\ \text{#1}\ ]\ }
\def\Label{\label}
\newenvironment{proofof}[1]{\vspace*{5mm} \par \noindent
{\it Proof of #1:\hspace{2mm}}}{\endproof
\hfill$\Box$ 
}
\newenvironment{proofsof}[1]{\vspace*{5mm} \par \noindent
         \quad{\it Proofs of #1:\hspace{2mm}}}{\endproof
\hfill$\Box$ 
}
\def\PF#1{\noindent{\sf #1}:\quad}
\newcommand{\mh}[1]{#1}
\begin{document}

\begin{frontmatter}
\title{Information Geometry Approach to Parameter Estimation in Hidden Markov Model}
\runtitle{Information Geometry Approach in Hidden Markov Model}

\begin{aug}
\author[A]{Masahito Hayashi}\ead[label=e1]{hayashi@sustech.edu.cn},\ead[label=e2]{masahito@math.nagoya-u.ac.jp}
\address[A]{
Shenzhen Institute for Quantum Science and Engineering, Southern University of Science and Technology,
Shenzhen, 518055, 
China. \\
Guangdong Provincial Key Laboratory of Quantum Science and Engineering,
Southern University of Science and Technology, Shenzhen, 518055, 
China. \\
Shenzhen Key Laboratory of Quantum Science and Engineering, Southern
University of Science and Technology, Shenzhen, 518055, 
China. \\
Graduate School of Mathematics, Nagoya University, Nagoya, 464-8602, Japan. \\
Center for Advanced Intelligence Project, RIKEN, Tokyo, 103-0027, Japan.\\
Centre for Quantum Technologies, National University of Singapore, 117543, Singapore.\\
Center for Quantum Computing, Peng Cheng Laboratory, Shenzhen, 518066, China.
\printead{e1,e2}}
\end{aug}

\begin{abstract}
We consider the estimation of 
the transition matrix of a hidden Markovian process by using information geometry with respect to transition matrices.
In this paper, only the histogram of $k$-memory data is used for the estimation.
To establish our method, we focus on a partial observation model with the Markovian process and we 
propose an efficient estimator whose asymptotic estimation error 
is given as the inverse of projective Fisher information of transition matrices.
This estimator is applied to the estimation of the transition matrix of the hidden Markovian process.
In this application, we carefully discuss the equivalence problem for hidden Markovian process on the tangent space.
\end{abstract}
\begin{keyword}[class=MSC]
\kwd[Primary ]{62M05}
\end{keyword}

\begin{keyword}
\kwd{hidden Markov}
\kwd{em-algorithm}
\kwd{projective Fisher information matrix}
\kwd{partial observation model}
\end{keyword}

\end{frontmatter}

\section{Introduction}\Label{s1}
Information geometry has been established by Amari and Nagaoka \cite{AN}
as a very powerful method for statistical inference.
Recently, this approach was applied to the estimation of the transition matrix of the Markovian process \cite{HW-est}.
When we observe the $n$ sequential data subject to a certain Markovian process,
if we focus on the geometry based on the probability distribution of the $n$ data, 
the geometrical structure changes according to the increase of the number $n$.
To resolve this problem, 
the paper \cite{HW-est} focuses on information geometry of transition matrices given by 
Nakagawa and Kanaya \cite{NK} and Nagaoka \cite{HN}. 
This is because this geometric structure depends only on the transition matrices, and 
does not change as the number $n$ increases.
Based on this property, the paper \cite{HW-est} introduced the curved exponential family of transition matrices, and derived the Cram\'{e}r-Rao inequality for the family, 
which shows the optimality of the inverse of Fisher information matrix for the transition matrices.

\begin{figure}[htbp]
\begin{center}
\scalebox{1}{\includegraphics[scale=0.5]{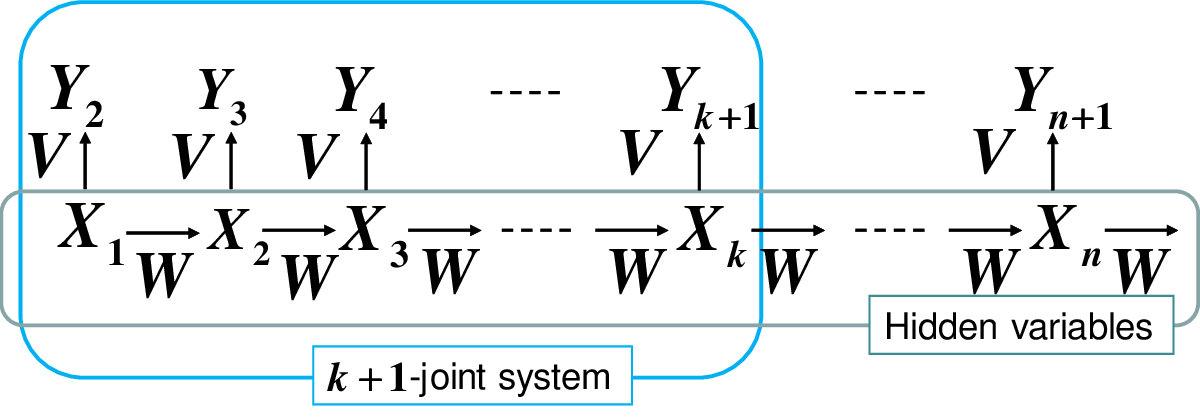}}
\end{center}
\caption{Pair of transition matrices: 
The transition matrix $W$ determines the Markovian process on the set ${\cal X}$ of hidden states.
The transition matrix $V$ determines the observed variable $Y$
with the condition on the hidden variable $X$.
If $k$ is sufficiently large, the joint distribution on $Y_1, \ldots, Y_{k+1}$ uniquely determines
the transition matrices $W$ and $V$.
The partial observation model is applied to 
the $k+1$ joint system on $X_{1},Y_2,\ldots, X_{k+1},Y_{k+2}$.}
\Label{2model}
\end{figure}%

On the other hand, 
the hidden Markov model (HMM) is an important
statistical model in many fields including Bioinformatics \cite{Durbin}
Econometrics \cite{Kim} and Population genetics \cite{Felsenstein}
(See also the recent overview \cite{Capp}.).
Some of preceding studies \cite{Amari-em,FN} to estimate the transition matrix of the hidden Markov process have already employed information geometry by using an em-algorithm
when the transition matrix is unknown and the state spaces is finite.
However, their em-algorithm is based on the geometry of probability distributions, which changes according to the increase of the number $n$ of observations. 
So, their estimation processes become complicated when $n$ is large.
Hence, it is hard to evaluate the asymptotic behavior of the estimation error, rigorously.

In this paper, we apply the information geometry of transition matrices to the estimation of 
the transition matrix of the hidden Markovian process with finite state space.
Since we need to estimate the hidden structure from the observed value,
we apply the em-algorithm based on the geometry of transition matrices.
\mh{Usually, the joint distribution for $Y_i,Y_{i+1}$ cannot uniquely determine the transition matrix to characterize the hidden Markov process.
However, if the positive integer $k$ is sufficiently large and the system is stationary,
the joint distribution for $Y_i,\ldots, Y_{i+k}$, i.e., the $k$-memory transition matrix for $Y$, 
uniquely determines the transition matrix to characterize the hidden Markov chain.
In this paper, exploiting this relation, we propose our method to estimate the transition matrix to characterize the hidden Markov process.
For this aim, we formulate a partial observation model of the Markovian process, which is composed of 
observed variables and unobserved variables, and establish the em-algorithm based on this model.
Then, we apply the partial observation model to the $k+1$ joint system on $X_{i-1},Y_i,\ldots, X_{i+k},Y_{i+k+1}$, where
the observed variables are given as random variables determined by $Y_i,\ldots, Y_{i+k+1}$.}

Next, using the transition matrix version of the projective Fisher information,
we evaluate the asymptotic error in the partial observation model of the Markovian process under a certain regularity condition.
To apply the partial observation model of the Markovian process
to the observed $k$-memory joint distribution,
we formulate an exponential family of $k$-memory transition matrices.
As the merit of our method, the calculation complexity of the em-algorithm does not depend on the number $n$ of observations
in this application because the geometrical structure does not depend on $n$.
As another result of the partial observation model of the Markovian process, 
we propose an efficient method for $\chi^2$-test to verify the validity of the given model, 
and show that it works well.
Since it checks the validity of a given model, we can employ it to choose a model among plural models.

We have another difficulty in the hidden Markovian process.
There is ambiguity for the transition matrix to express the hidden Markovian process. 
That is, there is a possibility that two different transition matrices express
the same hidden Markovian process. 
This problem is called the equivalence problem and was solved by Ito, Kobayashi, and Amari \cite{IAK}.
The asymptotic error is characterized by the local geometrical structure,
and the equivalence problem with local structure was discussed in another paper \cite{LEH},
which is needed to make parametrization without duplication.
Employing the parametrization given in \cite{LEH}, 
we apply our partial observation model of the Markovian process to the estimation of 
the transition matrix of the hidden Markovian process.

The remaining of this paper is organized as follows.
Section \ref{s2} gives a brief summary of the obtained results, which is crucial for understanding the structure of this paper.
As a preparation, Section \ref{s3} reviews the fundamental facts for an exponential family of transition matrices and its properties.
Section \ref{s4} discusses the partial observation model for the Markovian process, in which, we can observe a part of random variables.
Section \ref{s4-5} applies the model to the case of $k$-memory transition matrices.
To apply our result to the hidden Markovian process given in Fig. \ref{2model},
Section \ref{s8} gives a proper parametrization
based on the results of the paper \cite{LEH} for an exponential family of pairs of transition matrices.
Section \ref{s6B} applies the result of Section \ref{s4} to the estimation of the transition matrix of the hidden Markovian process.

\section{Summary of results}\Label{s2}
\subsection{Estimation in partial observation model}\Label{s2-1}
As the first part of our main result, in Section \ref{s4}, we address 
the partial observation model in a general Markovian process,
and discuss the estimation of the parameter to identify the 
Markovian process in a parametric family of Markovian processes.
Since the partial observation model is a general model containing 
hidden Markovian process,
this discussion works as a crucial preparation of estimation of hidden Markovian process.
\mh{In Sections 5, 6, and 7,
the partial observation model will be applied 
to the $k$ joint distribution on $X_i,Y_{i+1},\ldots, X_{i+k},Y_{i+k+1}$.}
We consider a large exponential family of 
transition matrices $W_{\vec{\theta}_1,\vec{\theta}_{2,3}}$
generated by 
$l_1+l_2+l_3$ independent variables $g_{1}, \ldots, g_{l_1}, g_{l_1+1}, \ldots, g_{l_1+l_2+l_3} $.
In this setting,
our observation is limited to the $l_1+l_2$ variables $g_1, \ldots, g_{l_1+l_2} $,
and the true transition matrix is assumed to belong to 
an exponential family of transition matrices $W_{0,\vec{\theta}_{2,3}}$.
\mh{In the above application, the variables $g_1, \ldots, g_{l_1+l_2} $ are functions of 
$Y_{i+1},\ldots, Y_{i+k+1}$.}
The definition of the exponential family of 
transition matrices generated by variables is given in Section \ref{s3-2}.

In this setting, we discuss how to estimate the parameter $\vec{\theta}$ from the average $\vec{Y}_{1,2}^n$ of 
$n$ observed data of the variables $g_1, \ldots, g_{l_1+l_2} $.
For this aim, as in Section \ref{s3-2},
we introduce the expectation parameter $\vec{\eta}_{1,2},\vec{\eta}_3$ corresponding 
the variables $g_{1}, \ldots, g_{l_1+l_2+l_3} $.
Using Legendre transform, we define the conversion from 
the expectation parameter $\vec{\eta}_{1,2},\vec{\eta}_3$ to
the natural parameter $\vec{\theta}(\vec{\eta}_{1,2},\vec{\eta}_3) $, 
whose detail calculation formula is given in \eqref{eq1-28-2}.
Then, we propose our estimator as
\begin{align}
\hat{\theta}_{2,3}(\vec{Y}_{1,2}^n) :=
\argmin_{\vec{\theta}_{2,3}' \in \Theta_{2,3}}
\min_{\vec{\eta}_3' \in {\cal V}_3}
D\big(W_{\vec{\theta}(\vec{Y}_{1,2}^n,\vec{\eta}_3')}\big\|W_{0,\vec{\theta}_{2,3}'}\big),
\Label{eq6-9-1R}
\end{align}
where ${\cal V}_3 $ is the $l_3$-dimensional real vector space and 
$D(W_{\vec{\theta}_1,\vec{\theta}_{2,3}}\|W_{\vec{\theta}_1',\vec{\theta}_{2,3}'})$
is the divergence between two transition matrices
$W_{\vec{\theta}_1,\vec{\theta}_{2,3}}$ and $W_{\vec{\theta}_1',\vec{\theta}_{2,3}'}$,
whose detail calculation formula is given in Section \ref{s3-4}.
As explained in Section \ref{s4}, this estimator can be calculated by using the em-algorithm with respect to 
the divergence for transition matrices.
Then, we show the following theorem.
\begin{theorem}\Label{PT1}
Under regularity conditions {\bf C1}, {\bf C2}, and {\bf C3}
given in Section \ref{s4}, 
the distribution of the estimation error of the estimator $\hat{\theta}_{2,3}(\vec{Y}_{1,2}^n) $ 
converges to the zero-mean Gaussian distribution
whose variance is the inverse of the projective Fisher information, whose definition will be given in 
\eqref{FTE}.
\end{theorem}

In the above theorem,
the estimation error means 
the random variable $\hat{\theta}_{2,3}(\vec{Y}_{1,2}^n) - \vec{\theta}_{2,3;o}$,
where $\vec{\theta}_{2,3;o}$ is the true natural parameter.
Next, we consider how to select a suitable model among several models.
In the case of hidden Markov processes, a submodel is  given as a set of singular points.
Hence, it is quite hard to apply conventional model selection methods.
Fortunately, the $\chi^2$-test can be formulated as follows.

\begin{theorem}\Label{T26}
We assume that $\Theta_{2,3}$ is an open set, 
any element of  $\Theta_{2,3}$ satisfies the same condition as Theorem \ref{PT1},
and the true parameter belongs to $\Theta_{2,3}$.
The random variable 
$2n \min_{\vec{\theta}_{2,3}' \in \Theta_{2,3}}
\min_{\vec{\eta}_3' \in {\cal V}_3}
D(W_{\vec{\theta}(\vec{Y}_{1,2}^n,\vec{\eta}_3')}\|W_{0,\vec{\theta}_{2,3}'})$
is subject to $\chi^2$-distribution with degree $l_1-l[\Theta_{2,3}]$,
where $l[\Theta_{2,3}]$ 
is an integer defined in Theorem \ref{6-13-thB}.
\end{theorem}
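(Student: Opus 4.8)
The plan is to mimic the proof of Wilks' theorem, but carried out entirely inside the information geometry of transition matrices: I would expand the divergence to second order, turning the nested minimizations in the test statistic into a Gaussian projection, and read off the degrees of freedom from a dimension count. The first step is localization. By the central limit theorem for additive functionals of the underlying Markov chain --- the same limit that drives Theorem~\ref{PT1} --- the empirical means satisfy $\sqrt{n}\,(\vec{Y}_{1,2}^n-\vec{\eta}_{1,2}^{\ast})\Rightarrow N(0,\Sigma_{1,2})$, where $\vec{\eta}_{1,2}^{\ast}$ is the true observed expectation parameter and $\Sigma_{1,2}$ is the observed block of the transition-matrix Fisher information supplied by the partial-observation theory of Section~\ref{s4}. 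Under the null hypothesis the true point lies on the e-flat model $\cM\defeq\{\vec{\theta}_1=0\}$.

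Writing $G$ for the transition-matrix Fisher metric, I would then expand $D(W_{\vec{\theta}}\|W_{\vec{\theta}'})=\frac12(\vec{\theta}-\vec{\theta}')^{\top}G\,(\vec{\theta}-\vec{\theta}')+o(\|\vec{\theta}-\vec{\theta}'\|^2)$ about the common true point, so that the two nested minimizations collapse to a single quadratic minimization. Geometrically the first argument sweeps out the m-flat $\cD\defeq\{\vec{\eta}_{1,2}=\vec{Y}_{1,2}^n\}$ (dimension $l_3$, obtained as $\vec{\eta}_3'$ varies) while the second sweeps out $\cM$ (dimension $l_2+l_3$). The em-algorithm returns exactly the associated pair of e- and m-projections, and by the Pythagorean relation for the em-structure, $2n$ times the minimal divergence equals the squared $G^{-1}$-norm of the Fisher-orthogonal projection of the rescaled error onto the gap space $V\defeq(T\cD+T\cM)^{\perp}$, the orthogonal complement being taken in the Fisher metric.

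A dimension count then yields the degrees of freedom. Since $\dim(T\cD\cap T\cM)=l_3-r$, where $r=\rank$ of the block of $G$ coupling the $\vec{\theta}_1$-directions to the $\vec{\eta}_3$-directions, one gets $\dim(T\cD+T\cM)=l_2+l_3+r$ and hence $\dim V=(l_1+l_2+l_3)-\dim(T\cD+T\cM)=l_1-r$. This $r$ is precisely the integer $l(\vec{\theta}_{2,3})$: it counts the out-of-model directions $\vec{\theta}_1$ that can be absorbed by adjusting the unobserved coordinate $\vec{\eta}_3$. The Jacobi matrix condition forces this rank to be locally constant, so that $l(\hat{\theta}_{2,3;o})\to l(\vec{\theta}_{2,3}^{\ast})$, and openness of $\Theta_{2,3}$ keeps the outer minimizer in the interior.

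The step I expect to be the main obstacle is upgrading this quadratic form to an \emph{exact} $\chi^2_{l_1-r}$ rather than a weighted mixture $\sum_i\lambda_i\chi^2_1$. On the residual testable directions left after profiling out both $\vec{\eta}_3$ and the model parameters, the effective divergence quadratic form must coincide with the inverse of the asymptotic covariance $\Sigma_{1,2}$; equivalently, the Schur complement of $G$ along the observed coordinates must equal the inverse projective Fisher information. This matching is exactly the identity behind Theorem~\ref{PT1} (asymptotic variance $=$ inverse projective Fisher information), and it is special to the transition-matrix geometry: for a Markov chain the limiting covariance of an empirical mean is the sandwiched CLT covariance, not the stationary one, and the clean $\chi^2$ emerges only because the transition-matrix Fisher information is \emph{defined} to equal that covariance. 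Once this identity is invoked the residual is a standard Gaussian in the Fisher metric, and its squared norm is $\chi^2$ with $l_1-l(\hat{\theta}_{2,3;o})$ degrees of freedom.
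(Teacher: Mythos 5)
Your proposal is correct and follows essentially the same route as the paper's proof in Appendix \ref{s4-3}: a second-order expansion of the divergence, whitening by the Hessian of the potential (which, by Proposition \ref{P2-9-1}, is simultaneously the asymptotic covariance of $\sqrt{n}\,\vec{Y}_{1,2}^n$ --- exactly the coincidence you flag as the crux), reduction of the double minimization to the orthogonal projection of the whitened data onto the complement ${\cal V}_5$ of ${\cal V}_3 + B\mathsf{H}_{0,\vec{\theta}_{2,3;o}}[\phi]{\cal V}_{2,3}$, and the dimension count $\dim {\cal V}_5 = l_1 - l(\vec{\theta}_{2,3;o})$. The one imprecision is your verbal gloss of $r$: it is not the rank of the raw block of $G$ coupling the $\vec{\theta}_1$- and $\vec{\eta}_3$-directions but $l(\vec{\theta}_{2,3;o})=\rank A(\vec{\theta}_{2,3;o})-l_2$ with $A=P_{1,2}\mathsf{H}P_{2,3}$ (equivalently the rank of the partial-covariance-adjusted block $H_{13}-H_{12}H_{22}^{-1}H_{23}$), although your own dimension count via $\dim(T\cD+T\cM)$ already forces this correct value.
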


Hence, when the number $n$ of observations is sufficiently large,
the random variable given in Theorem \ref{T26} can be used for the $\chi^2$-test.
Thus, a suitable model can  be selected among several models 
when we fixed a certain significance level $\alpha>0$.
We choose the model with the minimum dimension 
among models passing this $\chi^2$-test with the significance level $\alpha$.
As explained in Section \ref{s4}, 
the minimum in \eqref{eq6-9-1R} and Theorem \ref{T26} can be calculated by 
the em-algorithm, and the calculation complexity does not depend on $n$.

\subsection{Estimation in hidden Markovian model}\Label{s2-2}
Next, we address the estimation of the transition matrix of the hidden Markovian process given in Fig. \ref{2model},
where ${\cal Y}$ is the observed finite state system and ${\cal X}$ is the hidden finite state system. 
In this paper, to clarify the correspondence between the finite state system and the random variable, 
we denote the finite state space by ${\cal X}$ when it corresponds to the variable $X$.
As explained in Section \ref{s1}, the hidden Markov process  is modeled by a pair of
transition matrices $(W,V)$.
For this aim, after reviewing the result of the paper \cite{LEH},
we parametrize the hidden Markov process
as an exponential family of pairs of transition matrices $(W_{\vec{\theta}},V_{\vec{\theta}})$
with generators $g_{l_1+1}, \ldots, g_{l_1+l_2+l_3}$.
Then, we estimate the parameter $\vec{\theta}$ under the above family
from $\bar{n}-1$ observed data $Y_1, \ldots, Y_{\bar{n}-1}$.
If we employ several functions of $Y_1, \ldots, Y_{\bar{n}-1}$ as our estimator of $\vec{\theta}$,
the calculation complexity increases in some order as $\bar{n}$ increases.
To avoid this problem, 
we impose the following constraints.
We set $\bar{n}=n+k$ and allow to use only the averages of $k$-input functions.
That is, we prepare functions $g_1, \ldots, g_{l_1+l_2}$ with $k$ inputs
and use only their averages
$Y_i^n:=\sum_{j=0}^{n-1} \frac{1}{n}g_i(Y_{j+1}, \ldots,Y_{j+k}) $ with $i=1, \ldots, l_1+l_2$.
Here, $g_{l_1+1}, \ldots, g_{l_1+l_2}$ are observed generators across $k$ observed systems.
In this case, the estimate is given as a function 
$\vec{Y}^n:=(Y_1^n,\ldots, Y_{l_1+l_2}^n)\mapsto \hat{\theta}$.

We focus on the Markovian process with $k-1$-memory,
and apply the discussion in Section \ref{s2-1} to the $l_1+l_2+l_3$ variables $g_1, \ldots, g_{l_1+l_2+l_3} $ 
given in the above way.
Then, the exponential family of pairs of transition matrices $(W_{\vec{\theta}},V_{\vec{\theta}})$
is parametrized by an $l_2+l_3$-dimensional parameter space $\Theta_{2,3}$, and
we have the estimate $\hat{\theta}_{2,3}(\vec{Y}^n)  $ given in \eqref{eq6-9-1R}.
In this application, we obtain the following result. 

\begin{theorem}\Label{PT2}
We fix $k$ 
as an integer greater than a certain threshold 
\mh{(the concrete value will be given in Condition {\bf F3} after Theorem \ref{T3-29}.)}, 
and 
\mh{we consider the limit as $n$ tends to infinity.}
\mh{If the parameter space $\Theta_{2,3}$ satisfies some regularity conditions
and the true pair of transition matrices belongs to 
$\{(W_{\vec{\theta}},V_{\vec{\theta}})\}_{\vec{\theta}\in \Theta_{2,3}}$,
then} the projective Fisher information 
is invertible and 
the distribution of the estimation error of the estimator $\hat{\theta}_{2,3}(\vec{Y}^n)  $ converges to the zero-mean Gaussian distribution
whose variance is the inverse of the projective Fisher information.
(For the definition of the estimation error, see the sentence after Theorem \ref{PT1}.)
\end{theorem}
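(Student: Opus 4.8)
The plan is to deduce Theorem~\ref{PT2} from Theorem~\ref{PT1} by realizing the hidden Markovian estimation problem as a concrete instance of the partial observation model of Section~\ref{s4}. First I would pass from the pair $(W_{\vec{\theta}},V_{\vec{\theta}})$ to a Markovian process with $(k-1)$-memory: one enlarges the state so that a single transition of the enlarged first-order chain corresponds to emitting one new observed symbol while sliding a window of length $k-1$ over $\cY$. Under this lifting the generators $g_1,\ldots,g_{l_1+l_2}$, which are functions of $k$ consecutive observations, become ordinary functions of a single transition of the enlarged chain, and their empirical averages $Y_i^n$ occupy exactly the place of the observed averages $\vec{Y}_{1,2}^n$ appearing in \eqref{eq6-9-1R}. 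This identification embeds the whole estimation problem into the exponential family of $(k-1)$-memory transition matrices, so that the rewriting of Theorem~\ref{PT1} in that language becomes directly applicable.

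Next I would install the parametrization of \cite{LEH} in order to remove the redundancy caused by the equivalence problem. Since distinct pairs $(W,V)$ may generate the same observed law, the naive map $\vec{\theta}\mapsto(W_{\vec{\theta}},V_{\vec{\theta}})$ need not be locally injective; the coordinates supplied by \cite{LEH} resolve this degeneracy on the tangent space and exhibit $\{(W_{\vec{\theta}},V_{\vec{\theta}})\}_{\vec{\theta}\in\Theta_{2,3}}$ as a genuine curved exponential family with a locally identifiable parameter. With this parametrization fixed, all the hypotheses of the $(k-1)$-memory version of Theorem~\ref{PT1} can be verified term by term, the only substantive one being the Jacobi matrix condition.

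The hard part is the Jacobi matrix condition, equivalently the invertibility of the projective Fisher information, and this is precisely where the threshold on $k$ enters. The relevant Jacobi matrix relates the expectation of the observed generators to the parameter $\vec{\theta}$, and its full column rank is the infinitesimal statement that the $k$-memory histogram determines $\vec{\theta}$ to first order. For small $k$ the window is too short to distinguish parameter directions that differ only through long-range correlations of the observed process, so the Jacobi matrix is rank-deficient; as $k$ grows the histogram records longer correlations and the rank can only increase. I would therefore show that once $k$ exceeds a threshold $k_0$ governed by the cardinality of the hidden state space $\cX$ and the structural information carried by the parametrization of \cite{LEH}, the Jacobi matrix attains full column rank at every point of $\Theta_{2,3}$, so that the projective Fisher information becomes positive definite. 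This is the step I expect to demand the most care, since it couples the combinatorics of block statistics with the local identifiability results.

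Finally, with the Jacobi matrix condition secured for all $k\ge k_0$---which already yields the claimed invertibility of the projective Fisher information---I would invoke the $(k-1)$-memory form of Theorem~\ref{PT1} verbatim to obtain the remaining conclusion: the distribution of the estimation error converges to a centered Gaussian whose covariance is the inverse of the projective Fisher information, completing the proof of Theorem~\ref{PT2}.
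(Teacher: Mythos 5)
Your overall architecture matches the paper's: lift the pair $(W_{\vec{\theta}},V_{\vec{\theta}})$ to a $k-1$-memory transition matrix on ${\cal Y}\times{\cal X}$, use the parametrization of \cite{LEH} to handle the equivalence problem, verify the Jacobi matrix condition, and then invoke the partial-observation result (Theorem \ref{6-13-th}). However, the step you yourself identify as the hard one --- establishing the Jacobi matrix condition for $k$ above a threshold --- is left as an assertion supported only by a monotonicity heuristic, and the heuristic as stated is not what closes the argument. It is false that taking $k$ large enough forces the Jacobi matrix to have full column rank at \emph{every} point of the parameter space: at singular points (e.g.\ where two columns $V_{*,x}$ coincide, or where $\Ker P^{k_{(W,V)}}[(W,V)]\neq\{0\}$) the observed $k$-block distribution has vanishing first-order response in some tangent direction for \emph{all} $k$, so the rank deficiency persists no matter how long the window is. The full rank is a consequence of two things jointly: the non-singularity conditions (E1, E2 in the paper's Theorem \ref{T3-29}) and the threshold $k\ge k_{(W,V)}+k_{(P,(W,V))}+1$ (condition E3), and your sketch attributes it to the threshold alone.

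Concretely, the missing mechanism is the following chain, which the paper supplies via imported results. Proposition \ref{27-3} says that for $k$ at or above the threshold, a direction $\vec{a}$ annihilates $\frac{\partial}{\partial\theta}P^{k}[(W_{\vec{\theta}},V_{\vec{\theta}})]\cdot P_{W_{\vec{\theta}}}$ exactly when $\sum_j a^j g_j$ lies in ${\cal N}_{2,(W,V)}+{\cal N}_{P_W,(W,V)}$; Proposition \ref{VDT} says that under E1 and E2 these null spaces are zero-dimensional, so the derivatives of the $k$-block distribution in the $l_2+l_3$ parameter directions are linearly independent; and Lemma \ref{L-1-25-Y} (resting on Lemmas \ref{STR2} and \ref{STR}, i.e.\ on the observed generators spanning ${\cal G}({\cal Y}^k_{(W,V)})/{\cal N}({\cal Y}^k_{(W,V)})$) converts that linear independence into $\rank A(\vec{\theta})=l_2+l_3$, which by Lemma \ref{6-13-thL} is the invertibility of the projective Fisher information. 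Without identifying this equivalence between ``derivative of the block law vanishes'' and ``generator lies in the local-equivalence null space,'' and without the genericity hypotheses that kill that null space, your argument for the rank condition does not go through; the rest of your proof (the reduction and the final appeal to Theorem \ref{PT1}) is fine and is exactly what the paper does.
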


Since 
the estimator of Theorem \ref{PT1} is calculated by the em-algorithm,
the calculation complexity of our estimator from the averages
$Y_i^n:=\sum_{j=0}^{n-1} \frac{1}{n}g_i(Y_{j+1}, \ldots,Y_{j+k}) $ with $i=1, \ldots, l_1+l_2$
depends only on the integer $k$ and does not depend on the other integer $n$.
This is because the maximization problem to obtain the estimate from 
$Y_i^n$
is written as a maximization problem in a vector space whose dimension 
depends only on $k$ and the sizes of state spaces of ${\cal X}$ and ${\cal Y}$,
and is independent of $n$.
As the integer $k$ is fixed, the calculation complexity is $O(1)$ once we obtain
the averages. 
Although the calculation complexity of $Y_i^n$ is $O(n)$, its space complexity is $O(\log n)$,
which is an advantage over existing methods \cite{Amari-em,FN,Baum,KGMS}
\mh{as explained in the 4th paragraph of Section \ref{s9}.}

\section{Exponential family of transition matrices}\Label{s3}
\subsection{Preparation}
The aim of this paper is to handle hidden Markov process in the terms of exponential family of transition matrices.
To achieve this aim, we need 
several definitions for transition matrices on the finite state space ${\cal X}$, 
which are introduced as follows.
A non-negative matrix $W$ is called {\it irreducible} 
when for each $x,x'\in \cX$, there exists a natural number $n$ such that $W^n(x|x')>0$ \cite{MU}.
The irreducibility depends only on the support $\cX^2_W:=\{(x,x') \in \cX^2| W(x|x')>0\}$
for a non-negative matrix $W$ over $\cX$.
In the irreducible case, the average $ \sum_{i=1}^n \frac{1}{n} W^i P$
converges to the stationary distribution $P_{W}$ for any initial distribution $P$ on ${\cal X}$
as $n$ goes to infinity \cite{DZ,kemeny-snell-book}. 
An irreducible matrix $W$ is called {\it ergodic} when 
there are no input $x'$ and no integer $n'$
such that $W^{n} (x'|x')=0$ unless 
$n$ is divisible by $n'$ \cite{MU} \footnote{The definition of the ergodicity depends on
the research area. 
For example, in the quantum information,
the ergodicity is defined as the property of the convergence of
the average $ \sum_{i=1}^n \frac{1}{n} W^i P$ to the unique stationary distribution.
In this area, the property of the output distribution is called mixing property \cite{BG,Burgarth,Yoshida}.}.
It is known that 
the output distribution $W^n P$ converges to the stationary distribution of $W$
if and only if the transition matrix $W$ is ergodic \cite{kemeny-snell-book,DZ,MU}.
In this section, we treat only irreducible transition matrices and 
the class of ergodic transition matrices will be discussed in Section \ref{s4-5-1}.

To handle the transition matrix by using the joint distribution on ${\cal X}^2$,
we introduce the stationary condition for a distribution $P$ on ${\cal X}^2$ as
\begin{align}
\sum_{x' \in {\cal X}} P(x', x)
=\sum_{x' \in {\cal X}} P(x,x').\Label{symB}
\end{align}
For a transition matrix $W$ on ${\cal X}$,
we introduce 
the joint stationary distribution $P_W^2$ as $P_W^2(x,x') := W(x|x') P_W(x')$
and
the set of transition matrices ${\cal W}_{\cX,W}$
as\footnote{${\cal X}^2_{V}$
is defined by the same way as
${\cal X}^2_{W}$ with replacing the transition matrix $W$ by $V$.}
\begin{align}
{\cal W}_{\cX,W} := \{V| V \hbox{ is a transition matrix and }
{\cal X}^2_{W}={\cal X}^2_V \}.
\end{align}
Then, we have the following lemma.

\begin{lemma}\Label{STR3W}
The set of distributions $P_W^2$ equals that the set of 
distributions on ${\cal X}^2_W$ satisfying the stationary condition \eqref{symB}.
More precisely, 
\begin{align*}
\{ P_{W'}^2 | W' \in {\cal W}_{\cX,W}\}
=
\left\{ P\left| 
\begin{array}{l}
P \hbox{ is a distribution with support } {\cal X}^2_W \\
\hbox{to satisfy the stationary condition \eqref{symB}.}
\end{array}
\right.\right\}
\end{align*}
\end{lemma}

\begin{proof}
Since the joint distribution $P_W^2$ satisfies the stationary condition \eqref{symB} for any 
$W' \in {\cal W}_{\cX,W}$, we have the relation $\subset$.
For a joint distribution $P$ on ${\cal X}^2$ satisfying the stationary condition \eqref{symB},
we define the conditional distribution $W$ as $W(x|x'):=\frac{P(x,x')}{P_X(x')}$, where
$P_X(x'):=\sum_{x''}P(x'',x')$.
Due to the stationary condition \eqref{symB},
$P_X$ is the stationary distribution.
Hence, we obtain the opposite relation $\supset$.
\end{proof}

\subsection{Linear independence}\Label{s3-1}
Exponential family of transition matrices is generated by the variables defined as functions on 
$\cX^2_W $. 
The linear independence of generators is a key concept for the exponential family
because we need to avoid duplicate parametrization of transition matrices.
Hence, before introducing an exponential family, 
we prepare the following lemma.

\begin{proposition}[\protect{\cite[Lemma 3.1]{HW-est}}]\Label{L1}
Consider an irreducible transition matrix $W$ over $\cX$
and a real-valued function $g$ on $\cX \times \cX$.
Define $\phi(\theta)$ as the logarithm of the Perron-Frobenius eigenvalue of the matrix\footnote{For the 
Perron-Frobenius eigenvalue and Perron-Frobenius eigenvector, see Appendix \ref{PFTH}.}:
$
\overline{W}_{\theta}(x|x'):= W(x|x') e^{\theta g(x,x')}$.
Then, the function $\phi(\theta)$ is convex.
Further, the following conditions for the function $g$ are equivalent.
\begin{description}
\item[(1)] 
No real-valued function $f$ on $\cX$ satisfies that 
$g(x,x')=f(x)-f(x')+c$ for any $(x,x')\in \cX^2_W$ with a constant $c \in \mathbb{R}$.
\item[(2)]
The function $\phi(\theta)$ is strictly convex, i.e., 
$\frac{d^2 \phi}{d \theta^2}(\theta)>0$ for any $\theta$.
\item[(3)]
$\frac{d^2 \phi}{d \theta^2}(\theta)|_{\theta=0}>0$.
\end{description}
\end{proposition}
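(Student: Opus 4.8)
The plan is to set $\phi(\theta)=\log\lambda(\theta)$, where $\lambda(\theta)$ is the Perron--Frobenius eigenvalue of $\overline{W}_\theta$, and to extract all three properties from a probabilistic formula for $\phi''$. Since $W$ is irreducible and $e^{\theta g(x,x')}>0$, the matrix $\overline{W}_\theta$ is irreducible with the same support $\cX^2_W$ for every $\theta$; hence by Perron--Frobenius theory (Appendix \ref{PFTH}) $\lambda(\theta)$ is a simple, strictly positive eigenvalue depending analytically on $\theta$, with strictly positive right and left eigenvectors $r_\theta,l_\theta$ normalised by $\sum_x l_\theta(x)r_\theta(x)=1$. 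Convexity is quickest to see from the entrywise expansion $\overline{W}_\theta^{\,n}(x|x')=\sum_{\text{paths}}\big(\prod_{i=1}^n W(x_i|x_{i-1})\big)\,e^{\theta\sum_{i=1}^n g(x_i,x_{i-1})}$, a sum of exponentials in $\theta$ with nonnegative coefficients; thus $\frac1n\log\overline{W}_\theta^{\,n}(x|x')$ is convex (log-sum-exp), and letting $n\to\infty$ (so that $\frac1n\log\overline{W}_\theta^{\,n}(x|x')\to\phi(\theta)$) shows $\phi$ is convex. The same conclusion will reappear as $\phi''\ge 0$ below.

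Next I would compute the derivatives by first-order perturbation of $\overline{W}_\theta r_\theta=\lambda(\theta)r_\theta$. Because $\partial_\theta\overline{W}_\theta(x|x')=g(x,x')\overline{W}_\theta(x|x')$, pairing with $l_\theta$ gives $\phi'(\theta)=\sum_{x,x'}g(x,x')\,p_\theta(x,x')$, where $p_\theta(x,x'):=l_\theta(x)\overline{W}_\theta(x|x')r_\theta(x')/\lambda(\theta)$ is a probability distribution supported exactly on $\cX^2_W$. To handle $\phi''(\theta)$ uniformly in $\theta$, I would reduce every $\theta$ to the case $\theta=0$ by the Doob transform $\hat{W}_\theta(x|x'):=l_\theta(x)\overline{W}_\theta(x|x')/(\lambda(\theta)l_\theta(x'))$, which is a genuine transition matrix with the same support $\cX^2_W$, stationary distribution $l_\theta r_\theta$, and consecutive-pair distribution $p_\theta$. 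A direct check (a diagonal similarity by $\mathrm{diag}(l_\theta)$ rescaled by $1/\lambda(\theta)$) shows that the log Perron--Frobenius function of $\hat{W}_\theta$ tilted by $g$ equals $s\mapsto\phi(\theta+s)-\phi(\theta)$, so $\phi''(\theta)$ is exactly the second derivative at $s=0$ for the chain $\hat{W}_\theta$. It therefore suffices to analyse $\phi''(0)$ for an arbitrary irreducible transition matrix.

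For that analysis I would use the martingale (Poisson-equation) decomposition of the additive functional $S_n=\sum_{i=1}^n g(X_i,X_{i-1})$ of the stationary chain. Writing $m:=\phi'(0)=\rE[g]$, $\bar g(x'):=\sum_x W(x|x')g(x,x')$, and solving the Poisson equation $(\mathrm{I}-W)h=\bar g-m$ (solvable because $\bar g-m$ has zero stationary mean and the chain is irreducible), one obtains $S_n-nm=\sum_{i=1}^n N_i+h(X_0)-h(X_n)$ with martingale differences $N_i=g(X_i,X_{i-1})+h(X_i)-h(X_{i-1})-m$ on the support. Since the boundary term is bounded and the $N_i$ are uncorrelated, $\phi''(0)=\lim_n\frac1n\mathrm{Var}(S_n)=\rE[N_1^2]\ge 0$, which also re-proves convexity. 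The hard part will be exactly this step: identifying $\phi''$ with a genuine variance (justifying the interchange of limit and differentiation in the scaled cumulant generating function via the analyticity of the simple eigenvalue) together with the Poisson solvability for finite irreducible chains.

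Finally I would read off the equivalences. Because $p_0>0$ precisely on $\cX^2_W$, the formula $\phi''(0)=\rE[N_1^2]$ gives $\phi''(0)=0$ iff $N_1\equiv 0$ on $\cX^2_W$, i.e.\ iff $g(x,x')=m+h(x')-h(x)$ for all $(x,x')\in\cX^2_W$; setting $f:=-h$ and $c:=m$ this is precisely the negation of (1). Since $\cX^2_W$ does not depend on $\theta$, applying the identical statement to $\hat{W}_\theta$ shows $\phi''(\theta)=0$ for some $\theta$ iff it holds for all $\theta$ iff $g$ has the coboundary form, which yields (1)$\Leftrightarrow$(2) in one stroke. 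Then (2)$\Rightarrow$(3) is immediate, and (3)$\Rightarrow$(1) is the contrapositive of the easy telescoping remark that if $g(x,x')=f(x)-f(x')+c$ on $\cX^2_W$ then $S_n=nc+f(X_n)-f(X_0)$ has bounded fluctuation, whence $\phi''(0)=0$. This closes the cycle.
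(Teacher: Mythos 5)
The paper does not actually prove this proposition: it is imported verbatim as Lemma 3.1 of \cite{HW-est}, so there is no in-paper argument to compare yours against. Judged on its own, your proof is correct. The three ingredients --- (i) convexity from the log-sum-exp structure of the path expansion of $\overline{W}_{\theta}^{\,n}$, (ii) the Doob transform $\hat{W}_{\theta}$, which turns $\phi''(\theta)$ into the second derivative at $s=0$ of the tilted generating function of a genuine stochastic matrix with the same support $\cX^2_W$, and (iii) the Poisson-equation/martingale identification $\phi''(0)=\rE[N_1^2]$, which vanishes exactly when $g$ is a coboundary plus a constant on $\cX^2_W$ --- are all sound, and the reduction in (ii) is precisely what upgrades $(1)\Leftrightarrow(3)$ to $(1)\Leftrightarrow(2)$, since the coboundary condition depends only on $g$ and on $\cX^2_W$, not on $\theta$. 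The one step you flag as hard, identifying $\phi''$ with the asymptotic variance of $S_n$, need not be reproved from scratch: it is exactly the content of the central limit theorem proposition quoted in Section \ref{s3-6} of this paper (with references), so you may simply invoke it. Two minor points: the Poisson equation should be written with the adjoint, $h-W^{T}h=\bar{g}-m$, given the paper's convention that $W(x|x')$ is the transition probability from $x'$ to $x$; and for $(3)\Rightarrow(1)$ it is slightly cleaner to note that if $g(x,x')=f(x)-f(x')+c$ on $\cX^2_W$ then $\overline{W}_{\theta}$ is a diagonal conjugate of $e^{\theta c}W$, so $\phi(\theta)=\theta c$ is affine and $\phi''\equiv 0$ follows immediately.
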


Given two distinct transition matrices $W$ and $V$,
we assume that $\cX^2_W \subset \cX^2_V$ and $\cX^2_W$ is irreducible.
Then, we denote the logarithm of 
the Perron-Frobenius eigenvalue of the matrix
$W(x|x')^{1+s}V(x|x')^{-s}$ by $\varphi_{W,V}(1+s)$ under the condition given below.
Proposition \ref{L1} guarantees the differentiability of $\varphi_{W,V}(1+s)$.
So, we define the divergence $D({W} \| V)$ as
$
D({W} \| V):= 
\frac{d \varphi_{W,V}}{d s}(1)$. 

\subsection{Exponential family}\Label{s3-2}
To define an exponential family for transition matrices,
we focus on an irreducible transition matrix $W(x|x')$ from $\cX$ to $\cX$.
A set of real-valued functions $\{g_j\}$ on $\cX \times \cX$
is called {\it linearly independent} under the transition matrix $W(x|x')$
when any linear non-zero combination of $\{g_j\}$ satisfies 
the condition (1), (2) or (3) for the function $g$ given in Proposition \ref{L1}.
For $\vec{\theta}=(\theta^1, \ldots,\theta^l)$
and linearly independent functions $\{g_j\}$,
we define the matrix ${W}_{\vec{\theta}}(x|x')$ from $\cX$ to $\cX$ in the following way.
\begin{align}
\overline{W}_{\vec{\theta}}(x|x'):= W(x|x') e^{\sum_{j=1}^l  \theta^j g_j(x,x')}.
\Label{5-6}
\end{align}
Using the Perron-Frobenius eigenvalue $\lambda_{\vec{\theta}}$ of $\overline{W}_{\vec{\theta}}$,
we define the potential function $\phi(\vec{\theta}):=\log \lambda_{\vec{\theta}}$. 
In the following, we introduce geometrical structure based on the potential function $\phi(\vec{\theta})$, 
i.e., we employ the information geometry induced by the potential function $\phi(\vec{\theta})$
because the potential function $\phi(\vec{\theta})$ yields
all the statistical characteristics even in a Markovian process \cite{HW14-2}.

Note that, since the value $\sum_{x}\overline{W}_{\vec{\theta}}(x|x')$ generally depends on $x'$, 
we cannot make a transition matrix by simply multiplying a constant with the matrix $\overline{W}_{\vec{\theta}}$.
For deeper understanding, 
we employ the linear space ${\cal V}_{{\cal X}}:=\{
v=(v_x)_{x \in {\cal X}}| v_x \in \mathbb{R} \}$.
The inner product between $v,v'\in {\cal V}_{{\cal X}}$
is given as $\langle v|v'\rangle:= \sum_{x \in {\cal X}}v_x {v_x'}$. 
We identify the vector $v'$ with the notation $|v'\rangle$.
The functional over ${\cal V}_{{\cal X}}$ given as
$v'\mapsto\langle v|v'\rangle$
is denoted by $\langle v|$.
Hence, $|v'\rangle\langle v|$ expresses the linear map on 
${\cal V}_{{\cal X}}$: $v''\mapsto |v'\rangle\langle v|v''\rangle$.
A transition matrix $W$ on ${\cal X}$ can be regarded as
a linear map on ${\cal V}_{{\cal X}}$:
$(W v)_x:=\sum_{x' \in {\cal X}}W(x|x')v_{x'} $.

To make a transition matrix from the matrix $\overline{W}_{\vec{\theta}}$, we recall that
a non-negative matrix $U$ from $\cX$ to $\cX$ is a transition matrix
if and only if the vector $u_{{\cal X}}:=(1, \ldots,1)^T \in {\cal V}_{{\cal X}}$ 
is an eigenvector of the transpose $U^T$
and the eigenvalue is $1$.
In order to resolve this problem, we focus on the structure of the matrix $\overline{W}_{\vec{\theta}}$.
We denote the Perron-Frobenius eigenvectors
of $\overline{W}_{\vec{\theta}}$ and its transpose $\overline{W}_{\vec{\theta}}^T$
by $\overline{P}^2_{\vec{\theta}}$ and $\overline{P}^3_{\vec{\theta}}$.
Then, 
similar to \cite[(16)]{NK} \cite[(2)]{HN},
we define the matrix ${W}_{\vec{\theta}}(x|x')$ as
\begin{align}
{W}_{\vec{\theta}}(x|x'):= \lambda_{\vec{\theta}}^{-1} \overline{P}^3_{\vec{\theta}}(x)
\overline{W}_{\vec{\theta}}(x|x')\overline{P}^3_{\vec{\theta}}(x')^{-1}.\Label{12-26-1}
\end{align}
The matrix ${W}_{\vec{\theta}}(x|x')$ is 
a transition matrix because the vector $(1, \ldots,1)^T$ is an eigenvector of the transpose $W_{\vec{\theta}}^T$
and its Perron-Frobenius eigenvalue is $1$.
The stationary distribution $P_{{W}_{\vec{\theta}}}$ of the given 
transition matrix ${W}_{\vec{\theta}}$
is the Perron-Frobenius normalized eigenvector of the transition matrix ${W}_{\vec{\theta}}$,
which is given as 
$
P_{{W}_{\vec{\theta}}}(x)
=
\frac{\overline{P}^3_{\vec{\theta}}(x)\overline{P}^2_{\vec{\theta}}(x)}{
\sum_{x''}\overline{P}^3_{\vec{\theta}}(x'')\overline{P}^2_{\vec{\theta}}(x'')}
$\cite[(4.3)]{HW-est}.
In the following, we call the family of transition matrices 
${\cal E}:=\{ {W}_{\vec{\theta}} \}$ an {\it exponential family} of transition matrices 
generated by $W$ with the generator $\{g_1,\ldots,g_l\}$.

Since the generator $\{g_1,\ldots,g_l\}$ is linearly independent,
due to Proposition \ref{L1},\par\noindent
$\sum_{i,j}c^i c^j\frac{\partial^2 \phi}{\partial \theta^i\partial \theta^j}=
\frac{d^2 \phi(\vec{c}t)}{d t^2}$ 
is strictly positive definite for an arbitrary non-zero vector $\vec{c}=(c^1, \ldots, c^l)$.
That is, the Hesse matrix 
$\mathsf{H}_{\vec{\theta}} [\phi]=[\frac{\partial^2 \phi}{\partial \theta^i\partial \theta^j}]_{i,j}$ 
is strictly positive.
Using the potential function $\phi(\theta)$,
we discuss several concepts 
for transition matrices based on Proposition \ref{L1}, formally.
While the parameter $(\theta^1,\ldots,\theta^l)$ is called 
the {\it natural parameter},
we introduce the {\it expectation parameter}
$\eta_j(\vec{\theta}):= 
\frac{\partial \phi}{\partial\theta^j}(\vec{\theta})
=\sum_{x,x'} g_j(x, x')
P_{W_{\vec{\theta}}}^2(x,x')$.
For $\vec{\eta}=(\eta_1, \ldots,\eta_l)$,
we define $\theta^1(\vec{\eta}), \ldots, \theta^l(\vec{\eta})$
as $\eta_j(\theta^1(\vec{\eta}), \ldots, \theta^l(\vec{\eta}))=\eta_j$.

Here, to avoid the duplicate parametrization, we introduce the quotient space.
For a given transition matrix $W$, 
we consider a two-input function $g(x,x')$ defined on the subset ${\cal X}^2_W$, and denote the linear space of such two-input functions
by ${\cal G}({\cal X}^2_W)$.
We define the linear subspace ${\cal N}({\cal X}^2_W)$ of
the space ${\cal G}({\cal X}^2_W)$
as the set of functions $f(x)-f(x')+c$.
Then, we discuss the quotient space ${\cal G}({\cal X}^2_W)/{\cal N}({\cal X}^2_W)$, and the equivalent class containing $g$ is written as $[g]$\footnote{For the definition of 
quotient space, see the textbook \cite{Halmos}.}.

\begin{proposition}[\protect{\cite[Lemma 4.1]{HW-est}}]\Label{L1-14-2}
The following conditions are equivalent for the generator $\{g_j\}$
and the transition matrix $W$.
\begin{description}
\item[(1)]
The set of functions $\{g_j\}$ are linearly independent in the 
quotient space ${\cal G}({\cal X}^2_W)/{\cal N}({\cal X}^2_W)$.

\item[(2)]
The map $\vec{\theta} \to \vec{\eta}(\vec{\theta})$
is one-to-one.

\item[(3)]
The Hesse matrix $\mathsf{H}_{\vec{\theta}} [\phi]$ is strictly positive definite for any $\vec{\theta}$,
which implies the strict convexity of the potential function $\phi(\vec{\theta})$.

\item[(4)]
The Hesse matrix $\mathsf{H}_{\vec{\theta}} [\phi]|_{\vec{\theta}=0}$ is strictly positive.

\item[(5)]
The parametrization 
$\vec{\theta}\mapsto W_{\vec{\theta}}$ is faithful for any $\vec{\theta}$.
\end{description}
\end{proposition}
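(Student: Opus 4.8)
The plan is to funnel all five conditions through the single-generator trichotomy of Proposition~\ref{L1}, using directional derivatives of the potential. Fix a nonzero $\vec{c}=(c_1,\ldots,c_d)$ and put $g_{\vec{c}}:=\sum_{j=1}^d c_j g_j$. Restricting the family to the line $t\mapsto\vec{\theta}+t\vec{c}$ again gives a single-generator exponential family, namely the one generated by the transition matrix $W_{\vec{\theta}}$ with the single generator $g_{\vec{c}}$, because $\overline{W}_{\vec{\theta}+t\vec{c}}=\overline{W}_{\vec{\theta}}\,e^{t g_{\vec{c}}}$ and the rescaling~\eqref{12-26-1} (a diagonal similarity together with a scalar) changes the log Perron--Frobenius eigenvalue only by the additive constant $\phi(\vec{\theta})$. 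Together with $\sum_{i,j}c_ic_j\frac{\partial^2\phi}{\partial\theta^i\partial\theta^j}(\vec{\theta})=\frac{d^2}{dt^2}\phi(\vec{\theta}+t\vec{c})\big|_{t=0}$, this identifies the directional quadratic form $\vec{c}^{\,T}\mathsf{H}_{\vec{\theta}}[\phi]\vec{c}$ with the second derivative at its base point of the single-generator potential for the pair $(W_{\vec{\theta}},g_{\vec{c}})$. Crucially, since $\cX^2_{W_{\vec{\theta}}}=\cX^2_W$, condition~(1) of Proposition~\ref{L1} for $g_{\vec{c}}$ --- that $g_{\vec{c}}$ is \emph{not} of the form $f(x)-f(x')+c$ on $\cX^2_W$, i.e.\ $g_{\vec{c}}\notin\cN(\cX^2_W)$ --- does not depend on the base point $\vec{\theta}$.

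This remark yields $(1)\Leftrightarrow(3)\Leftrightarrow(4)$ immediately. Condition~(1) says exactly that no nonzero $\vec{c}$ places $g_{\vec{c}}$ in $\cN(\cX^2_W)$; by the equivalence $(1)\Leftrightarrow(3)$ of Proposition~\ref{L1} applied to $(W_{\vec{\theta}},g_{\vec{c}})$ this is the same as $\vec{c}^{\,T}\mathsf{H}_{\vec{\theta}}[\phi]\vec{c}>0$ for all $\vec{\theta}$ and all $\vec{c}\neq 0$, which is condition~(3). Because the coboundary condition is base-point free, strict positivity at the single point $\vec{\theta}=0$ already forces it at every $\vec{\theta}$, giving $(4)\Rightarrow(3)$ and hence $(3)\Leftrightarrow(4)$.

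For $(2)\Leftrightarrow(3)$ I would use convex analysis. The Hesse matrix is everywhere nonnegative, so $\phi$ is convex and $\vec{\theta}\mapsto\vec{\eta}(\vec{\theta})=\nabla\phi(\vec{\theta})$ is the gradient map of a convex function; such a map is injective precisely when $\phi$ is strictly convex, i.e.\ under condition~(3). The only direction needing the first paragraph is $\neg(3)\Rightarrow\neg(2)$: a null direction $\vec{c}$ of $\mathsf{H}_{\vec{\theta}}[\phi]$ is null along the \emph{whole} line $\vec{\theta}+t\vec{c}$ (the coboundary condition being base-point free), so $\phi$ is affine along that line and $\nabla\phi$ takes equal values at two of its points.

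The step I expect to be the main obstacle is $(5)\Leftrightarrow(3)$, since faithfulness of $\vec{\theta}\mapsto W_{\vec{\theta}}$ must be read off the normalization~\eqref{12-26-1} rather than from $\phi$. If (3) fails, choose a null direction $\vec{c}$; then $g_{\vec{c}}(x,x')=f(x)-f(x')+a$ on $\cX^2_W$, whence $\overline{W}_{\vec{\theta}+t\vec{c}}(x|x')=\overline{W}_{\vec{\theta}}(x|x')\,e^{ta}\,e^{tf(x)}e^{-tf(x')}$. In~\eqref{12-26-1} the diagonal gauge $e^{tf(x)}e^{-tf(x')}$ is absorbed into the Perron--Frobenius eigenvectors and the scalar $e^{ta}$ into $\lambda_{\vec{\theta}}$, so $W_{\vec{\theta}+t\vec{c}}=W_{\vec{\theta}}$ for all $t$ and (5) fails. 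Conversely, each $W_{\vec{\theta}}$ determines its stationary joint distribution $P_{W_{\vec{\theta}}}^2$ and thereby the expectation parameters $\eta_j(\vec{\theta})=\sum_{(x,x')\in\cX^2_W}P_{W_{\vec{\theta}}}^2(x,x')g_j(x,x')$ (the $g_j$-averages under that distribution), so $W_{\vec{\theta}}=W_{\vec{\theta}'}$ forces $\vec{\eta}(\vec{\theta})=\vec{\eta}(\vec{\theta}')$; assuming (3), hence (2), this gives $\vec{\theta}=\vec{\theta}'$, so (5) holds. The delicate point is to verify cleanly that the two Perron--Frobenius rescalings exactly cancel the coboundary part $f(x)-f(x')$, for which the explicit gauge form of~\eqref{12-26-1} is precisely what is required.
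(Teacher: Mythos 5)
The paper itself offers no proof of Proposition \ref{L1-14-2}: it is imported verbatim as \cite[Lemma 4.1]{HW-est} (the remark after Lemma \ref{L9-29} explicitly lists it among the statements that are ``known''), so there is nothing in the source to compare you against line by line. Judged on its own terms, your argument is correct and is the natural self-contained route: you reduce every condition to the single-generator trichotomy of Proposition \ref{L1} via the identity $\overline{W}_{\vec{\theta}+t\vec{c}}=\overline{W}_{\vec{\theta}}e^{tg_{\vec{c}}}$, the observation that the normalization \eqref{12-26-1} is a diagonal similarity plus a scalar (so the one-parameter potential based at $W_{\vec{\theta}}$ is $t\mapsto\phi(\vec{\theta}+t\vec{c})-\phi(\vec{\theta})$), and the base-point independence of the coboundary condition because ${\cal X}^2_{W_{\vec{\theta}}}={\cal X}^2_W$. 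That correctly yields $(1)\Leftrightarrow(3)\Leftrightarrow(4)$, and your gauge computation for $\neg(3)\Rightarrow\neg(5)$ (absorbing $e^{tf(x)}e^{-tf(x')}$ into the Perron--Frobenius eigenvectors and $e^{ta}$ into $\lambda$) checks out exactly. Two points deserve one explicit sentence each rather than a gloss. First, in $\neg(3)\Rightarrow\neg(2)$ the passage from ``$\vec{c}$ is a null direction of the quadratic form along the whole line'' to ``$\nabla\phi$ is constant on the line'' needs the standard fact that for a positive semidefinite matrix $A$ the equality $\vec{c}^{\,T}A\vec{c}=0$ forces $A\vec{c}=0$, so that $\frac{d}{dt}\nabla\phi(\vec{\theta}+t\vec{c})=\mathsf{H}_{\vec{\theta}+t\vec{c}}[\phi]\vec{c}=0$; without this, affineness along a segment only controls the component of the gradient in the direction $\vec{c}$. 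Second, in $(3)\Rightarrow(5)$ you should cite the identity $\eta_j(\vec{\theta})=\sum_{(x,x')}g_j(x,x')P^2_{W_{\vec{\theta}}}(x,x')$ (it is what the paper uses in Lemma \ref{STR2W} and Subsection \ref{s3-6}) so that $W_{\vec{\theta}}=W_{\vec{\theta}'}$ really does force $\vec{\eta}(\vec{\theta})=\vec{\eta}(\vec{\theta}')$. With those two sentences added, the proof is complete.
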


For linearly independent functions, we have the following lemma.

\begin{lemma}\Label{STRW}
The following conditions for linearly independent function $g_1, \ldots, g_l$
in 
${\cal G}({\cal X}^2_W)/{\cal N}({\cal X}^2_W)$
are equivalent.
\begin{description}
\item[(1)]
The set of two-input functions $\{g_j\}$ form a basis of the quotient space 
${\cal G}({\cal X}^2_W)/{\cal N}({\cal X}^2_W)$.
\item[(2)]
The set $\{W_{\vec{\theta}}\}_{\vec{\theta} \in \bR^l}$
equals the set of transition matrices with the support 
${\cal X}^{k}_W$, i.e.,
$
 \{W_{\vec{\theta}}\}_{\vec{\theta} \in \bR^l}
=
{\cal W}_{\cX,W}$.
\item[(3)]
We have 
$\{ P_{W_{\vec{\theta}}}^2 \}_{\vec{\theta} \in \bR^l} 
=
\left\{ P\left| 
\begin{array}{l}
P \hbox{ is a distribution with support } {\cal X}^2_W \\
\hbox{to satisfy the stationary condition \eqref{symB}.}
\end{array}
\right.\right\}$.
\end{description}
\end{lemma}

\begin{proof}
\PF{${\bf (1)} \Leftrightarrow {\bf (2)}$}
Any element $W'\in {\cal W}_{\cX,W}$ can be written as
$W'(x|x')= W(x|x')e^{g(x,x')}$ by using an element 
$g \in {\cal G}({\cal X}^2_W)$
because of the relation $\log \frac{W'(x|x')}{W(x|x')}\in {\cal G}({\cal X}^2_W)$.
Since the relation
$ \{W_{\vec{\theta}}\}_{\theta \in \bR^l}
\subset {\cal W}_{\cX,W}$ holds,
we have the equivalence relation between conditions {\bf (1)} and {\bf (2)}.

\PF{${\bf (2)} \Leftrightarrow {\bf (3)}$}
The equivalence between conditions {\bf (2)} and {\bf (3)} follows from Lemma \ref{STR3W}.
\end{proof}

This lemma shows that 
${\cal W}_{\cX,W}$ is an exponential family.

\begin{lemma}\Label{STR2W}
Given generators $g_1, \ldots, g_l$, 
we define the linear map $F^2$ from the set of first derivatives of 
$P^2_{W_{\vec{\theta}}}$ to $\mathbb{R}^{l}$ as
\begin{align}
F^2\Bigl( 
\frac{d}{dt}P^2_{W_{\vec{\theta} + \vec{c}(t)}} \Big|_{t=0}\Bigl) 
:=
\Big(\sum_{(x,x')\in {\cal X}^2_W}
g_j(x,x')\sum_{i=1}^l 
\frac{d c^i(t)}{dt}
\frac{\partial}{\partial \theta^i}
P^2_{W_{\vec{\theta}}} (x,x')\Big|_{t=0}
 \Big)_{j=1, \ldots,l},
\end{align}
where $\vec{c}$ is a differentiable function from $\bR$ to $\bR^l$ such that $\vec{c}(0)=0$. 
Then, the linear map $F^2$ is invertible.
\end{lemma}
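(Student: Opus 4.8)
The plan is to compute the matrix that represents $F^2$ in a natural basis of its domain, to recognize that matrix as the Hesse matrix $\mathsf{H}_{\vec{\theta}}[\phi]$, and then to invoke Proposition \ref{L1-14-2} for invertibility. First I would identify the domain: since $\frac{d}{dt}P^2_{W_{\vec{\theta}+\vec{c}(t)}}\big|_{t=0}=\sum_{i=1}^l \frac{dc_i}{dt}(0)\,\frac{\partial}{\partial\theta^i}P^2_{W_{\vec{\theta}}}$, the space of first derivatives is the linear span of the $l$ tangent vectors $\frac{\partial}{\partial\theta^i}P^2_{W_{\vec{\theta}}}$, and $F^2$ sends such a tangent vector to the vector whose $j$-th component is $\sum_{(x,x')}g_j(x,x')\frac{\partial}{\partial\theta^i}P^2_{W_{\vec{\theta}}}(x,x')$. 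Hence everything reduces to the $l\times l$ pairing matrix $M_{ji}:=\sum_{(x,x')}g_j(x,x')\frac{\partial}{\partial\theta^i}P^2_{W_{\vec{\theta}}}(x,x')$: if $M$ is nonsingular, then the $\frac{\partial}{\partial\theta^i}P^2_{W_{\vec{\theta}}}$ are automatically linearly independent (a null combination, paired against each $g_j$, would give $Ma=0$, hence $a=0$), so the domain is genuinely $l$-dimensional, $F^2$ is well defined, and its matrix in this basis is $M$.

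The crux is the identity $\eta_j(\vec{\theta})=\sum_{(x,x')}g_j(x,x')P^2_{W_{\vec{\theta}}}(x,x')$, i.e.\ that the expectation parameter equals the mean of the generator under the stationary joint distribution $P^2_{W_{\vec{\theta}}}(x,x')=W_{\vec{\theta}}(x|x')P_{W_{\vec{\theta}}}(x')$. This follows from first-order Perron--Frobenius perturbation theory: with $\phi=\log\lambda_{\vec{\theta}}$ and the normalized right/left eigenvectors $\overline{P}^2_{\vec{\theta}},\overline{P}^3_{\vec{\theta}}$ of $\overline{W}_{\vec{\theta}}$ (so that $\sum_x \overline{P}^3_{\vec{\theta}}(x)\overline{P}^2_{\vec{\theta}}(x)=1$), one has $\frac{\partial\lambda_{\vec{\theta}}}{\partial\theta^j}=\sum_{x,x'}\overline{P}^3_{\vec{\theta}}(x)\frac{\partial\overline{W}_{\vec{\theta}}(x|x')}{\partial\theta^j}\overline{P}^2_{\vec{\theta}}(x')$. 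Since $\frac{\partial\overline{W}_{\vec{\theta}}(x|x')}{\partial\theta^j}=g_j(x,x')\overline{W}_{\vec{\theta}}(x|x')$, dividing by $\lambda_{\vec{\theta}}$ and substituting \eqref{12-26-1} together with $P_{W_{\vec{\theta}}}(x')=\overline{P}^3_{\vec{\theta}}(x')\overline{P}^2_{\vec{\theta}}(x')$ collapses the expression to $\frac{\partial\phi}{\partial\theta^j}=\sum_{x,x'}g_j(x,x')\,\lambda_{\vec{\theta}}^{-1}\overline{P}^3_{\vec{\theta}}(x)\overline{W}_{\vec{\theta}}(x|x')\overline{P}^2_{\vec{\theta}}(x')=\sum_{x,x'}g_j(x,x')P^2_{W_{\vec{\theta}}}(x,x')$. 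This is the transition-matrix analogue of the classical $\partial_\theta\log Z=\langle g\rangle$, and may alternatively be cited from \cite{HW-est,NK,HN}.

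Differentiating this identity in $\theta^i$ gives $M_{ji}=\frac{\partial\eta_j}{\partial\theta^i}=\frac{\partial^2\phi}{\partial\theta^i\partial\theta^j}$, so the matrix of $F^2$ in the basis $\{\frac{\partial}{\partial\theta^i}P^2_{W_{\vec{\theta}}}\}$ is exactly the Hesse matrix $\mathsf{H}_{\vec{\theta}}[\phi]$. By Proposition \ref{L1-14-2}(3), linear independence of $\{g_j\}$ makes $\mathsf{H}_{\vec{\theta}}[\phi]$ strictly positive, hence nonsingular; as noted above this simultaneously guarantees the domain is $l$-dimensional and that $F^2$ is invertible. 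I expect the main obstacle to be the second step: carrying out the eigenvector bookkeeping (the two Perron--Frobenius eigenvectors, their normalization, and the cancellation of the $\overline{P}^3_{\vec{\theta}}(x')^{-1}$ factor from \eqref{12-26-1}) cleanly enough to land on $\sum g_j P^2_{W_{\vec{\theta}}}$. Once that expectation-parameter identity is in hand, the remaining steps are a single differentiation and a direct appeal to the already-established strict positivity of the Hesse matrix.
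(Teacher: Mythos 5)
Your proposal is correct and follows essentially the same route as the paper's proof: the matrix of $F^2$ in the basis $\{\frac{\partial}{\partial\theta^i}P^2_{W_{\vec{\theta}}}\}$ is identified with the Hesse matrix $\mathsf{H}_{\vec{\theta}}[\phi]$ via the identity $\eta_j(\vec{\theta})=\sum_{(x,x')}g_j(x,x')P^2_{W_{\vec{\theta}}}(x,x')$, and invertibility follows from its strict positive definiteness under linear independence of the generators. The extra material you supply (the Perron--Frobenius perturbation derivation of the expectation-parameter identity and the observation that nonsingularity of the pairing matrix also forces the domain to be $l$-dimensional) is correct detail that the paper leaves implicit.
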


\begin{proof}
The expectation parameter $\eta_j(\vec{\theta})$ satisfies 
\begin{align}
\frac{\partial^2 \phi(\vec{\theta})}{\partial \theta^i \partial \theta^j }
=
\frac{\partial}{\partial \theta^i}
\eta_j(\vec{\theta})
=\sum_{(x,x')\in {\cal X}^2_W}
g_j(x,x')
\frac{\partial}{\partial \theta^i}
P^2_{W_{\vec{\theta}}}(x,x').
\end{align}
Since the Hessian $\frac{\partial^2 \phi(\vec{\theta})}{\partial \theta^i \partial \theta^j }
$ is a positive definite matrix, we have the desired statement.
\end{proof}

To understand the quotient space ${\cal G}({\cal X}^2_W)/{\cal N}({\cal X}^2_W)$,
we define the map $W_*$ on ${\cal G}({\cal X}^2_W) $ as
$(g(x,x'))_{x,x' \in {\cal X}} \mapsto (g(x,x')W(x|x'))_{x,x' \in {\cal X}}$.
We denote the image of $W_*$ by ${\cal L}({\cal X}^2_W)$ 
although the image is the same as ${\cal G}({\cal X}^2_W)$.
Then, 
we introduce the subspaces ${\cal L}_1({\cal X}^2_W)$
and ${\cal G}_{1,W}({\cal X}^2_W)$ as
\begin{align*}
{\cal L}_1({\cal X}^2_W) := \{
B\in {\cal G}({\cal X}^2_W)
| B^T u_{{\cal X}}=0\} ,\quad
{\cal G}_{1,W}({\cal X}^2_W) := W_*^{-1}
{\cal L}_1({\cal X}^2_W) .
\end{align*}

\begin{lemma}\Label{L9-29}
Assume $W$ is irreducible.
For any element $[g']$ of
${\cal G}({\cal X}^{2}_W)/{\cal N}({\cal X}^{2}_W)$,
there uniquely exists an element $g $ of ${\cal G}_{1,W}( {\cal X}^{2}_W)$
such that $[g']=[g]$. 
Therefore, we can regard 
the space ${\cal G}_{1,W}( {\cal X}^{2}_W)$
as the quotient space
${\cal G}({\cal X}^{2}_W)/{\cal N}({\cal X}^{2}_W)$.
\end{lemma}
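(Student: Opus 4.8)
The plan is to prove that ${\cal G}_{1,W}({\cal X}^2_W)$ is a linear complement of ${\cal N}({\cal X}^2_W)$ inside ${\cal G}({\cal X}^2_W)$; existence and uniqueness of the representative are then precisely the statements that the quotient map, restricted to ${\cal G}_{1,W}({\cal X}^2_W)$, is surjective and injective. Fix a representative $g' \in {\cal G}({\cal X}^2_W)$. A function $g$ satisfies $[g]=[g']$ if and only if $g(x,x') = g'(x,x') + f(x) - f(x') + c$ for some function $f$ on ${\cal X}$ and some $c \in \mathbb{R}$; and $g \in {\cal G}_{1,W}({\cal X}^2_W)$ means $W_* g \in {\cal L}_1({\cal X}^2_W)$, i.e. $\sum_x g(x,x')W(x|x')=0$ for every $x'$. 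So I must produce, uniquely, the data $(f,c)$ realizing both conditions (up to changes in $f$ that do not affect $g$).

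Set $a(x') := \sum_x g'(x,x')W(x|x')$. Using $\sum_x W(x|x')=1$, substituting the expression for $g$ into the membership constraint and cancelling gives
\[
\sum_x f(x)\,W(x|x') - f(x') = -\,a(x') - c \qquad (\forall x').
\]
Writing $L$ for the linear operator $(Lf)(x') := \sum_x f(x)W(x|x') - f(x')$ on functions over ${\cal X}$ (this is $W^T - I$ in matrix form), the task reduces to solving $L f = -(a + c\, u_{{\cal X}})$ for $f$, with the scalar $c$ still to be fixed.

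Here irreducibility enters through the Perron-Frobenius theorem: $1$ is a simple eigenvalue of $W$, its right eigenspace is $\mathbb{R}\,P_W$ with $P_W$ the strictly positive stationary distribution, and the kernel of $L=W^T-I$ is the space $\mathbb{R}\,u_{{\cal X}}$ of constant functions. Hence the range of $L$ is the hyperplane $\{b \mid \sum_{x'} P_W(x')\,b(x')=0\}$. Therefore $L f = -(a + c\, u_{{\cal X}})$ is solvable if and only if $\sum_{x'} P_W(x')\,(a(x')+c)=0$, and since $\sum_{x'}P_W(x')=1$ this forces the unique value $c = -\sum_{x'} P_W(x')\,a(x')$. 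With this $c$ a solution $f$ exists and is unique up to addition of a constant; as such a shift leaves $f(x)-f(x')$, hence $g$, unchanged, the representative $g \in {\cal G}_{1,W}({\cal X}^2_W)$ is uniquely determined, establishing existence and uniqueness at once.

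I expect the only genuine obstacle to be this spectral input, namely that $L$ has rank exactly $|{\cal X}|-1$ with kernel and cokernel spanned by $u_{{\cal X}}$ and $P_W$ respectively, since that is what makes the Fredholm solvability condition single out one admissible $c$; the rest is bookkeeping. As a cross-check, a dimension count using $\dim {\cal N}({\cal X}^2_W)=|{\cal X}|$ and $\dim {\cal G}_{1,W}({\cal X}^2_W)=|{\cal X}^2_W|-|{\cal X}|$ yields an alternative proof once ${\cal G}_{1,W}({\cal X}^2_W)\cap{\cal N}({\cal X}^2_W)=\{0\}$ is verified, which is exactly the same stationarity computation specialized to $g'=0$.
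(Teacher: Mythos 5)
Your argument is correct, and it reduces the lemma to exactly the same linear system that the paper's proof in Appendix~\ref{AS2} solves: writing $a(x'):=\sum_x g'(x,x')W(x|x')$, both proofs must show that $-(a+c\,u_{{\cal X}})$ lies in the range of $L=W^T-I$ for a suitable $c$, equivalently that $\mathrm{range}(L)+\mathbb{R}u_{{\cal X}}={\cal V}_{{\cal X}}$. Where you differ is in how that fact is established. The paper argues constructively and ergodically: it shows $|f\rangle-(W^T)^n|f\rangle\in\mathrm{range}(L)$ for every $n$, takes Ces\`{a}ro averages, and uses the convergence $\frac{1}{n}\sum_i (W^T)^i|f\rangle\to\langle f|P_W\rangle\,|u_{{\cal X}}\rangle$ for irreducible $W$ to conclude that $|f\rangle-\langle f|P_W\rangle|u_{{\cal X}}\rangle\in\mathrm{range}(L)$; this never explicitly invokes the rank of $L$. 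You instead invoke the Perron--Frobenius simplicity of the eigenvalue $1$ to identify $\ker L=\mathbb{R}u_{{\cal X}}$ and $\mathrm{range}(L)=(\mathbb{R}P_W)^{\perp}$, and then apply the Fredholm solvability condition. Your route buys two things: the constant $c=-\sum_{x'}P_W(x')a(x')$ is pinned down explicitly by the compatibility condition, and uniqueness of the representative falls out immediately from $\langle u_{{\cal X}}|P_W\rangle=1$ together with the one-dimensionality of $\ker L$ --- a point the paper's written proof leaves implicit (its Step 2 only exhibits existence, with uniqueness recoverable by the dimension count you sketch at the end). The paper's route is slightly more elementary in that it needs only the Ces\`{a}ro convergence to the stationary distribution, which it has already cited, rather than the full spectral statement.
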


\begin{remark}
Lemma \ref{L9-29} was essentially proven in a more general form as \cite[Lemma 8]{LEH}.
Since its proof is composed of more complicated notations, 
we give a proof of Lemma \ref{L9-29} with current notations for readers' convenience
in Appendix \ref{AS2}.
Only Lemmas \ref{STR3W}, \ref{STRW}, and \ref{STR2W} are novel in this section.
Other statements in Section \ref{s3} are known.
\end{remark}

In particular, when $W$ is a positive transition matrix, i.e., ${\cal X}^{2}_{W}={\cal X}^{2}$,
the subspace ${\cal N}( {\cal X}^{2}_{W})$ does not depend on $W$
and is abbreviated to ${\cal N}({\cal X}^2)$.
In this case,
${\cal W}_{\cX,W}$ is the set of positive transition matrices.
Then, it does not depend on $W$, and is abbreviated to ${\cal W}_{\cX}$.

We define the Fisher information matrix for the natural parameter by 
the Hesse matrix
$\mathsf{H}_{\vec{\theta}} [\phi]
=[\frac{\partial^2 \phi}{\partial \theta^i\partial \theta^j}(\vec{\theta})]_{i,j}$.
The Fisher information matrix for the expectation parameter 
$\vec{\eta}(\vec{\theta})$
is given as 
$\mathsf{H}_{\vec{\theta}} [\phi]^{-1}$.
It is known that the inverse of Fisher information matrix gives the minimum mean square error of estimation under the respective parameters \cite[Section 8]{HW-est}.
Further, 
for fixed values $\theta_o^{k+1},\ldots,\theta_o^l$,
we call the subset $\{W_{\vec{\theta}}\in {\cal E}|
\vec{\theta}=(\theta^1,\ldots,\theta^{k},\theta_o^{k+1},\ldots,\theta_o^l)
\}$ an exponential subfamily of ${\cal E}$.

In the above discussion, we denote an element of the tangent space by an element $g$ or $[g]$ of 
${\cal G}_{1,W}({\cal X}_W^2) $ or ${\cal G}({\cal X}_W^2) /{\cal N}({\cal X}_W^2) $.
However, it is possible to express an element of the tangent space by an element $W_* g$
of $ {\cal L}_{1}({\cal X}_W^2)$, i.e., 
$(W_* g)(x,x')=g(x,x')W(x|x')$.
The former expression is called the exponential representation ($e$-representation),
and the latter is called the mixture representation ($m$-representation).

\subsection{Mixture family}\Label{s3-3}
In the following, we characterize the set of transition matrices
when we have a part of information.
For this aim, we assume that
the functions $\{g_j\}$ satisfy the condition of Proposition \ref{L1-14-2}.
For fixed values $\eta_{o,1},\ldots,\eta_{o,k}$,
we call the subset $\{W_{\vec{\theta}}\in {\cal E}|
\vec{\eta}(\vec{\theta})=(\eta_{o,1},\ldots,\eta_{o,k},\eta_{k+1},\ldots,\eta_l)
\}$ a {\it mixture subfamily of ${\cal E}$}.
Given a transition matrix $W$, 
when the expectation values of real-valued functions $g_{j}$ on $\cX^2$ are known to be real numbers $b_j$,
we consider that the true transition belongs to 
the set $\{V \in {\cal W}_{\cX,W}| \sum_{x,x'}g_j(x,x')V(x|x')P_{V}(x')=b_j \forall j \}$, which is called 
a {\it mixture family on ${\cal X}^2_W$ generated by the constraints $\{g_j=b_j\}$}.
Note that a mixture family on ${\cal X}^2_W$ 
does not necessarily contain $W$
because its definition depends on the real numbers $b_j$.
When $W$ is a positive transition matrix, 
it is simply called a {\it mixture family generated by the constraints 
$\{g_j=b_j\}$}
because ${\cal W}_{\cX,W}$ is the set of positive transition matrices.
For a given transition matrix $W$ and 
two mixture families ${\cal M}_1$ and ${\cal M}_2$ on ${\cal X}^2_W$,
the intersection ${\cal M}_1 \cap {\cal M}_2$ is also a mixture family
on ${\cal X}^2_W$.

\if0
\begin{proposition}[\protect{\cite[Lemma 4.2]{HW-est}}]\Label{L5-1}
The intersection of the mixture family on ${\cal X}^2_W$ generated by the constraints $\{g_j=b_j\}_{j=1,\ldots,k}$
and the exponential family ${\cal W}_{\cX,W}$
is the mixture subfamily 
$\{W_{\vec{\theta}}\in {\cal W}_{\cX,W} |
\vec{\eta}(\vec{\theta})=(b_1,\ldots,b_{k},\eta_{k+1},\ldots,\eta_d)\}$
of the exponential family ${\cal W}_{\cX,W}$.
\end{proposition} 
\fi

\subsection{Divergence}\Label{s3-4}
To discuss the difference between transition matrices, we characterize the divergence
by using the potential function $\phi(\vec{\theta})$
as follows. 
\begin{proposition}[\protect{\cite[Lemma 4.3]{HW-est}}]\Label{L7}
Two transition matrices 
${W}_{\vec{\theta}}$ and ${W}_{\vec{\theta}'}$ satisfy
\begin{align}
D({W}_{\vec{\theta}} \| {W}_{\vec{\theta}'})= &
\sum_{j=1}^l
(\theta^j-{\theta'}^j)\frac{\partial \phi}{\partial \theta^j}(\vec{\theta})- \phi(\vec{\theta})+ \phi(\vec{\theta}') \Label{1-1}.
\end{align}
\end{proposition}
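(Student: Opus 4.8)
The plan is to differentiate the defining formula for the relative entropy and express everything in terms of the potential function $\phi$ and its first derivatives, which are exactly the expectation parameters $\eta_j$. Recall that for two transition matrices $W_{\vec\theta}$ and $W_{\vec\theta'}$ with $\cX^2_{W_{\vec\theta}}\subset \cX^2_{W_{\vec\theta'}}$, the relative entropy was defined in Section \ref{s3-1} as $D(W_{\vec\theta}\|W_{\vec\theta'})=\frac{d\varphi}{ds}(1)$, where $\varphi(1+s)$ is the logarithm of the Perron--Frobenius eigenvalue of the matrix $W_{\vec\theta}(x|x')^{1+s}W_{\vec\theta'}(x|x')^{-s}$. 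The key observation is that this auxiliary matrix lies inside the same exponential family: writing out the generators, $W_{\vec\theta}(x|x')^{1+s}W_{\vec\theta'}(x|x')^{-s}$ is (up to the normalizing Perron--Frobenius factors that do not affect the geometry) of the form $W(x|x')\exp\big(\sum_j (\theta^j+s(\theta^j-{\theta'}^j))g_j(x,x')\big)$, i.e.\ the member of the family at natural parameter $\vec\theta + s(\vec\theta-\vec\theta')$.

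First I would make that identification precise: show that the Perron--Frobenius eigenvalue of $W_{\vec\theta}(x|x')^{1+s}W_{\vec\theta'}(x|x')^{-s}$ equals $\lambda_{\vec\theta+s(\vec\theta-\vec\theta')}$ modified by the boundary contributions from the eigenvector rescaling \eqref{12-26-1}. The cleanest route is to work with the unnormalized matrices $\overline{W}_{\vec\theta}$, where the eigenvector factors $\overline{P}^3$ appear as $\overline{P}^3_{\vec\theta}(x)^{1+s}\overline{P}^3_{\vec\theta'}(x)^{-s}$ and $\overline{P}^3_{\vec\theta}(x')^{-(1+s)}\overline{P}^3_{\vec\theta'}(x')^{s}$; these are a diagonal similarity transformation, so they leave the Perron--Frobenius eigenvalue unchanged. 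Hence $\varphi(1+s)=\phi\big(\vec\theta+s(\vec\theta-\vec\theta')\big)$ exactly, with no boundary correction surviving in the eigenvalue.

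Once $\varphi(1+s)=\phi(\vec\theta+s(\vec\theta-\vec\theta'))$ is established, the rest is the chain rule. Differentiating in $s$ and evaluating at $s=1$ gives
\begin{align}
D(W_{\vec\theta}\|W_{\vec\theta'})
=\frac{d\varphi}{ds}(1)
=\sum_{j=1}^d (\theta^j-{\theta'}^j)\,\frac{\partial\phi}{\partial\theta^j}\big(\vec\theta+(\vec\theta-\vec\theta')\big).
\end{align}
This is not yet the stated form, so the final step is to reconcile the evaluation point. Here I would instead choose the substitution so that $s=0$ corresponds to $\vec\theta$ and $s=1$ to $\vec\theta'$; that is, set $\varphi(1+s)$ to be the log Perron--Frobenius eigenvalue along the segment $\vec\theta-s(\vec\theta-\vec\theta')$ running from $\vec\theta'$ back to $\vec\theta$, so that the derivative at the endpoint produces $\frac{\partial\phi}{\partial\theta^j}(\vec\theta)$ with the correct sign, matching \eqref{1-1}. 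Integrating the derivative $\frac{d}{ds}\phi$ along the segment from $\vec\theta'$ to $\vec\theta$ yields $\phi(\vec\theta)-\phi(\vec\theta')$, and the stated identity $D(W_{\vec\theta}\|W_{\vec\theta'})=\sum_j(\theta^j-{\theta'}^j)\frac{\partial\phi}{\partial\theta^j}(\vec\theta)-\phi(\vec\theta)+\phi(\vec\theta')$ follows by comparing the derivative expression with this integral, exactly the Bregman-divergence structure attached to the convex potential $\phi$.

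The main obstacle I anticipate is the careful bookkeeping in the first step, namely verifying that the diagonal rescaling by the Perron--Frobenius eigenvectors really does drop out of the eigenvalue computation and that the support condition $\cX^2_{W_{\vec\theta}}\subset\cX^2_{W_{\vec\theta'}}$ guarantees $W_{\vec\theta'}(x|x')^{-s}$ is well defined on the relevant support. The differentiation of the Perron--Frobenius eigenvalue itself is justified by the convexity and differentiability already granted by Proposition \ref{L1}, so the analytic content is mild; the bulk of the work is the algebraic identification $\varphi(1+s)=\phi(\vec\theta+s(\vec\theta-\vec\theta'))$ together with matching the sign conventions so that the first derivative lands on $\frac{\partial\phi}{\partial\theta^j}(\vec\theta)$ as required.
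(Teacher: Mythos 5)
The paper does not actually prove Proposition \ref{L7}; it imports it from \cite[Lemma 4.3]{HW-est}, so I can only judge your argument on its own terms. Your overall strategy --- identify the tilted matrix $W_{\vec\theta}(x|x')^{1+s}W_{\vec\theta'}(x|x')^{-s}$ with a member of the exponential family along the line through $\vec\theta$ and $\vec\theta'$, and then differentiate the potential --- is the right one. But there are two concrete errors that together break the computation.

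First, the claim that $\varphi(1+s)=\phi\big(\vec\theta+s(\vec\theta-\vec\theta')\big)$ holds ``exactly, with no boundary correction surviving in the eigenvalue'' is false. Writing $W_{\vec\theta}=\lambda_{\vec\theta}^{-1}\,\overline{P}^3_{\vec\theta}(x)\,\overline{W}_{\vec\theta}(x|x')\,\overline{P}^3_{\vec\theta}(x')^{-1}$ as in \eqref{12-26-1}, the product $W_{\vec\theta}^{1+s}W_{\vec\theta'}^{-s}$ equals
\begin{align*}
\lambda_{\vec\theta}^{-(1+s)}\lambda_{\vec\theta'}^{\,s}\;
D\,\overline{W}_{\vec\theta+s(\vec\theta-\vec\theta')}\,D^{-1},
\qquad
D=\mathrm{diag}\Big(\overline{P}^3_{\vec\theta}(x)^{1+s}\overline{P}^3_{\vec\theta'}(x)^{-s}\Big).
\end{align*}
You are right that the diagonal conjugation $D(\cdot)D^{-1}$ leaves the Perron--Frobenius eigenvalue untouched, but the scalar prefactor $\lambda_{\vec\theta}^{-(1+s)}\lambda_{\vec\theta'}^{\,s}$ multiplies the whole matrix and hence the eigenvalue, so the correct identity is $\varphi(1+s)=\phi\big(\vec\theta+s(\vec\theta-\vec\theta')\big)-(1+s)\phi(\vec\theta)+s\phi(\vec\theta')$. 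Those two extra terms are precisely the source of the $-\phi(\vec\theta)+\phi(\vec\theta')$ in \eqref{1-1}; having discarded them, you then try to reintroduce them by ``integrating the derivative along the segment and comparing,'' which is not a derivation from the definition $D(W\|V)=\frac{d\varphi}{ds}(1)$ --- the divergence is a single derivative at a point, not a path integral. Second, the evaluation point: since $\varphi(1+s)$ is by definition the log-eigenvalue of $W^{1+s}V^{-s}$, the quantity $\frac{d\varphi}{ds}(1)$ is the derivative of $\varphi$ at argument $1$, i.e.\ at $s=0$, where the matrix reduces to $W_{\vec\theta}$ itself and the chain rule produces $\frac{\partial\phi}{\partial\theta^j}(\vec\theta)$ directly. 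Your evaluation at $s=1$ lands the gradient at $2\vec\theta-\vec\theta'$, and the subsequent reparametrization ``running from $\vec\theta'$ back to $\vec\theta$'' does not describe a well-defined fix. With both corrections the proof is immediate: differentiate the displayed expression for $\varphi(1+s)$ at $s=0$ to get $\sum_j(\theta^j-\theta'^j)\frac{\partial\phi}{\partial\theta^j}(\vec\theta)-\phi(\vec\theta)+\phi(\vec\theta')$.
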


The Fisher information matrix 
$\mathsf{H}_{\vec{\theta}} [\phi]$
can be characterized by the limit of the 
divergence as follows.

\begin{proposition}[\protect{\cite[Lemma 4.4]{HW-est}}] \Label{L20}
For $\vec{c}=(c^1, \ldots, c^l)$, 
we have
\begin{align}
\Label{27-20}
\lim_{t \to 0}
\frac{2}{t^2}D({W}_{\vec{\theta}} \| {W}_{\vec{\theta}+\vec{c}t})
=&
\lim_{t \to 0}
\frac{2}{t^2}D({W}_{\vec{\theta}+\vec{c}t} \| {W}_{\vec{\theta}})
=\sum_{i,j}\mathsf{H}_{\vec{\theta}} [\phi]_{i,j}c^i c^j .
\end{align}
\end{proposition}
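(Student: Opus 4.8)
The plan is to read both limits off the closed form for the relative entropy in Proposition \ref{L7} after performing a second-order Taylor expansion of the potential function $\phi$ about $\vec{\theta}$. The only genuinely analytic ingredient is the smoothness of $\phi$: since $\phi(\vec{\theta})=\log\lambda_{\vec{\theta}}$ is the logarithm of the Perron--Frobenius eigenvalue of the matrix $\overline{W}_{\vec{\theta}}$, analytic perturbation theory makes $\lambda_{\vec{\theta}}$, and hence $\phi$, real-analytic in $\vec{\theta}$, so the Taylor expansions below are legitimate; the existence (and symmetry) of the second derivatives assembled in $\mathsf{H}_{\vec{\theta}}[\phi]$ is in any case already recorded in Proposition \ref{L1-14-2}. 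Granting this, the rest is bookkeeping with the first-order Taylor terms.

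For the first equality, I would substitute $\vec{\theta}'=\vec{\theta}+\vec{c}\,t$ into \eqref{1-1}. Because the two gradient terms there are both evaluated at the \emph{fixed} point $\vec{\theta}$, the linear-in-$t$ contributions cancel against the linear term of $\phi(\vec{\theta}+\vec{c}t)$:
\begin{align*}
D(W_{\vec{\theta}}\|W_{\vec{\theta}+\vec{c}t})
&= -t\sum_{j}c^j\frac{\partial\phi}{\partial\theta^j}(\vec{\theta})
-\phi(\vec{\theta})+\phi(\vec{\theta}+\vec{c}t)\\
&= -t\sum_{j}c^j\frac{\partial\phi}{\partial\theta^j}(\vec{\theta})
+t\sum_{j}c^j\frac{\partial\phi}{\partial\theta^j}(\vec{\theta})
+\frac{t^2}{2}\sum_{i,j}\mathsf{H}_{\vec{\theta}}[\phi]_{i,j}c^ic^j+o(t^2)\\
&= \frac{t^2}{2}\sum_{i,j}\mathsf{H}_{\vec{\theta}}[\phi]_{i,j}c^ic^j+o(t^2),
\end{align*}
so multiplying by $2/t^2$ and letting $t\to0$ yields the asserted right-hand side.

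For the reversed divergence I would instead set $\vec{\theta}\mapsto\vec{\theta}+\vec{c}t$ and $\vec{\theta}'\mapsto\vec{\theta}$ in \eqref{1-1}, giving
\begin{align*}
D(W_{\vec{\theta}+\vec{c}t}\|W_{\vec{\theta}})
= t\sum_{j}c^j\frac{\partial\phi}{\partial\theta^j}(\vec{\theta}+\vec{c}t)
-\phi(\vec{\theta}+\vec{c}t)+\phi(\vec{\theta}).
\end{align*}
The subtlety relative to the first case is that the gradient is now evaluated at the \emph{moving} point, so I must also expand it to first order, $\frac{\partial\phi}{\partial\theta^j}(\vec{\theta}+\vec{c}t)=\frac{\partial\phi}{\partial\theta^j}(\vec{\theta})+t\sum_i \mathsf{H}_{\vec{\theta}}[\phi]_{i,j}c^i+o(t)$, while $\phi(\vec{\theta}+\vec{c}t)$ is expanded to second order as above. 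The linear terms cancel once more, and the quadratic contributions combine as $t^2\sum_{i,j}\mathsf{H}_{\vec{\theta}}[\phi]_{i,j}c^ic^j-\tfrac{t^2}{2}\sum_{i,j}\mathsf{H}_{\vec{\theta}}[\phi]_{i,j}c^ic^j=\tfrac{t^2}{2}\sum_{i,j}\mathsf{H}_{\vec{\theta}}[\phi]_{i,j}c^ic^j$, producing the same limit.

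I expect no real obstacle beyond the analytic justification of the first paragraph—namely that $\phi$ is $C^2$ with the stated Hessian—so that is where I would concentrate the care. Everything else is the cancellation of first-order Taylor terms, which reflects the structural fact that $D$ is a Bregman divergence generated by $\phi$: its leading-order behaviour is symmetric in the two arguments and governed precisely by $\mathsf{H}_{\vec{\theta}}[\phi]$, which is why both limits coincide.
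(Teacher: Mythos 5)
Your proof is correct and follows the same route the paper implicitly relies on: Proposition \ref{L20} is quoted from \cite[Lemma 4.4]{HW-est} without a proof here, and the argument is exactly the second-order Taylor expansion of the Bregman-divergence expression \eqref{1-1} from Proposition \ref{L7}, with smoothness of $\phi$ supplied by analytic perturbation of the simple Perron--Frobenius eigenvalue. Your handling of the asymmetric case (expanding the gradient at the moving point to first order) is the only place requiring care, and you did it correctly.
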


The right hand side of (\ref{1-1}) 
can be regarded as the Bregman divergence \cite{Br}\footnote{Amari-Nagaoka \cite{AN} also defined the same quantity as 
the Bregman divergence with the name ``canonical divergence.''}
of the strictly convex function $\phi(\vec{\theta})$.
In the following, we derive several properties of the divergence
by using Bregman divergence.
That is, the following properties follow only from the strong 
convexity of $\phi(\vec{\theta})$ and 
the properties of Bregman divergence.

Using \cite[(40)]{Am}, we have
another expression of $ D({W}_{\vec{\theta}} \| {W}_{\vec{\theta}'})$
as 
\begin{align}
D(W_{\vec{\theta}(\vec{\eta})} \| W_{\vec{\theta}(\vec{\eta}')})
=\sum_{j=1}^l \theta(\vec{\eta}')^j(\eta_j'-\eta_j) 
- \nu(\vec{\eta}') +\nu(\vec{\eta}),
\Label{5-28-1}
\end{align}
where $\nu(\vec{\eta})$ is defined as Legendre transform of 
$\phi(\vec{\theta}) $ as
\begin{align}
\nu(\vec{\eta})
&:= \max_{\vec{\theta}} \sum_{i=1}^l  \theta^i \eta_i - \phi(\vec{\theta}) 
= \sum_{i=1}^l \theta^i(\vec{\eta}) \eta_i - \phi(\vec{\theta}(\vec{\eta}))\Label{eq1-28-1} \\
\vec{\theta}(\vec{\eta}) &=
\argmax_{\vec{\theta}} \sum_{i=1}^l  \theta^i \eta_i - \phi(\vec{\theta}) .\Label{eq1-28-2}
\end{align}
Since $\nu(\vec{\eta})$ is convex as well as $\phi(\vec{\theta})$,
we have the following lemma.
\begin{proposition}[\protect{\cite[Lemma 4.5]{HW-est}}] \Label{L1-14}
(1) For a fixed $\vec{\theta}$, 
the map $\vec{\theta}' \mapsto
D(W_{\vec{\theta}} \| W_{\vec{\theta}'})$
is convex.
(2) For a fixed $\vec{\theta}'$, 
the map $\vec{\eta} \mapsto
D(W_{\vec{\theta}(\vec{\eta})} \| W_{\vec{\theta}'})$ is convex.
\end{proposition}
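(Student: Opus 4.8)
The plan is to read off both convexity statements directly from the two explicit expressions for the relative entropy established above, recognizing in each case that, as a function of the variable being moved, the divergence splits into an affine part plus a single convex summand. This is precisely the Bregman-divergence structure announced in the paragraph preceding the statement, so no new analytic input is needed beyond the convexity of the two potentials $\phi$ and $\nu$.

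For part (1) I would start from \eqref{1-1}, treating $\vec{\theta}$ as fixed and $\vec{\theta}'$ as the free variable. There the quantities $\frac{\partial \phi}{\partial \theta^j}(\vec{\theta})$ are constants, so the term $\sum_{j}(\theta^j-{\theta'}^j)\frac{\partial\phi}{\partial\theta^j}(\vec{\theta})$ is affine in $\vec{\theta}'$ and $-\phi(\vec{\theta})$ is a constant; the only genuine dependence on $\vec{\theta}'$ enters through $+\phi(\vec{\theta}')$. Since $\phi$ is convex (indeed strictly convex by Proposition \ref{L1-14-2}), and adding an affine function preserves convexity, the map $\vec{\theta}'\mapsto D(W_{\vec{\theta}}\|W_{\vec{\theta}'})$ is convex.

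For part (2) I would instead use the dual expression \eqref{5-28-1}, writing the fixed second argument as $\vec{\theta}'=\vec{\theta}(\vec{\eta}')$ and treating $\vec{\eta}$ as the free variable. Now $\theta(\vec{\eta}')^j$ and $\nu(\vec{\eta}')$ are constants, so $\sum_j \theta(\vec{\eta}')^j(\eta_j'-\eta_j)-\nu(\vec{\eta}')$ is affine in $\vec{\eta}$, and the only genuine dependence on $\vec{\eta}$ is through $+\nu(\vec{\eta})$. The Legendre transform $\nu$ defined in \eqref{eq1-28-1} is convex, being the pointwise supremum over $\vec{\theta}$ of the affine functions $\vec{\eta}\mapsto \sum_i\theta^i\eta_i-\phi(\vec{\theta})$; hence $\vec{\eta}\mapsto D(W_{\vec{\theta}(\vec{\eta})}\|W_{\vec{\theta}'})$ is convex by the same ``affine plus convex'' argument.

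The only point requiring care is the choice of coordinate in each part: the natural-parameter formula \eqref{1-1} exhibits convexity manifestly only in the \emph{second} argument, through $\phi$, while the dual formula \eqref{5-28-1} exhibits it only in the expectation parameter of the \emph{first} argument, through $\nu$. Because $\phi$ and its Legendre transform $\nu$ are exactly the two convex potentials of the dually flat structure, there is no substantive obstacle beyond matching each claim to the representation in which the relevant potential appears linearly in all the constant-coefficient terms.
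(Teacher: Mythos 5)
Your proposal is correct and follows essentially the same route as the paper, which derives this proposition in one line from the convexity of $\phi$ and of its Legendre transform $\nu$ via the two Bregman-divergence expressions \eqref{1-1} and \eqref{5-28-1} (``affine plus convex'' in each coordinate system). Your only inessential embellishment is invoking strict convexity of $\phi$ from Proposition \ref{L1-14-2}; plain convexity suffices and is all that is needed.
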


It is known that Bregman divergence satisfies the Pythagorean theorem \cite{AN}\cite[(34)]{Am}\footnote{The derivation 
of the Pythagorean theorem for Bregman divergence 
is also available in \cite[Section 2.2.2]{H2nd}.}.
Here, we should remark that Bregman divergence is defined for a strictly convex function, not for a distribution family.
This kind of generality of Bregman divergence is the key point for information geometry discussed in this paper.
As mentioned in \cite[Proposition 4.6]{HW-est},
applying this fact, we have the following proposition as the Pythagorean theorem.
\begin{proposition}[\protect{\cite[(23)]{HN}}]\Label{P1-15}
We focus on two points 
$\vec{\theta}'=({\theta'}^1,\ldots,{\theta'}^l)$
and $\vec{\theta}=({\theta}^1,\ldots,{\theta}^l)$.
We choose 
the exponential subfamily of ${\cal E}$ whose natural parameters
$\theta^{k+1},\ldots,\theta^l $ are fixed to 
${\theta}^{k+1},\ldots,{\theta}^l$,
and
the mixture subfamily of ${\cal E}$ whose expectation parameters
$\eta_1,\ldots,\eta_k $ are fixed to 
$\eta_1(\vec{\theta}'),\ldots,\eta_k(\vec{\theta}') $.
Let ${\vec{\theta}''}=({\theta''}^1,\ldots,{\theta''}^l)$
be the natural parameter of the intersection of these two subfamilies of ${\cal E}$.
That is, 
${\theta''}^{j}={\theta}^{j}$ for $j=k+1,\ldots,d$
and
$\eta_j({\vec{\theta}''})=\eta_j(\vec{\theta}') $ for $k=1, \ldots,k$.
Then, we have 
\begin{align}
\Label{5-1}
D({W}_{\vec{\theta}'} \| {W}_{\vec{\theta}})
=
D({W}_{\vec{\theta}'} \| {W}_{{\vec{\theta}''}})+D({W}_{{\vec{\theta}''}} \| {W}_{\vec{\theta}}).
\end{align}
\end{proposition}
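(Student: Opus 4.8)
The plan is to establish \eqref{5-1} by a direct computation with the Bregman form of the relative entropy in Proposition \ref{L7}, rather than by quoting the abstract Pythagorean theorem. Writing $\eta_j(\vec{\theta})=\frac{\partial\phi}{\partial\theta^j}(\vec{\theta})$, that proposition gives, for any two points, $D(W_{\vec{\theta}_1}\|W_{\vec{\theta}_2})=\sum_{j}(\theta_1^j-\theta_2^j)\eta_j(\vec{\theta}_1)-\phi(\vec{\theta}_1)+\phi(\vec{\theta}_2)$. First I would write out each of the three divergences appearing in \eqref{5-1} with this formula, turning the whole claim into a polynomial identity in the coordinates $\vec{\theta},\vec{\theta}',\vec{\theta}''$ and the values of $\phi$ and the $\eta_j$ at these points.

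Next I would add the two divergences on the right-hand side: the terms $\pm\phi(\vec{\theta}'')$ cancel at once, and subtracting the left-hand side $D(W_{\vec{\theta}'}\|W_{\vec{\theta}})$ removes the remaining potential terms $-\phi(\vec{\theta}')+\phi(\vec{\theta})$ as well. Collecting the coefficient of $\eta_j(\vec{\theta}')$ from the surviving linear sums, the entire claim then reduces to the single orthogonality identity
\begin{align*}
\sum_{j} ({\theta''}^j-{\theta}^j)\big(\eta_j(\vec{\theta}'')-\eta_j(\vec{\theta}')\big)=0 .
\end{align*}

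Finally I would verify this identity by splitting the sum at the index $k$ and using the two defining properties of the intersection point $\vec{\theta}''$: for $j=k+1,\ldots,d$ the exponential-subfamily condition ${\theta''}^j={\theta}^j$ annihilates the factor ${\theta''}^j-{\theta}^j$, while for $j=1,\ldots,k$ the mixture-subfamily condition $\eta_j(\vec{\theta}'')=\eta_j(\vec{\theta}')$ annihilates the factor $\eta_j(\vec{\theta}'')-\eta_j(\vec{\theta}')$; hence every summand vanishes and \eqref{5-1} holds. I expect no genuine analytic difficulty here — strict convexity of $\phi$ from Proposition \ref{L1-14-2} is needed only to guarantee that $\vec{\theta}''$ is well defined and that the dual coordinates $\eta_j$ make sense — so the one step requiring care is the bookkeeping of the index split: checking that the cross term indeed collects into the complementary-coordinate product above, so that the block $\{1,\ldots,k\}$ kills one factor and the block $\{k+1,\ldots,d\}$ kills the other.
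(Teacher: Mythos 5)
Your proposal is correct and is in substance the paper's own argument: the paper derives \eqref{5-1} by observing that \eqref{1-1} exhibits $D$ as a Bregman divergence and then citing the known Pythagorean theorem for Bregman divergences, and your computation --- reducing the claim to $\sum_{j}({\theta''}^j-{\theta}^j)(\eta_j(\vec{\theta}'')-\eta_j(\vec{\theta}'))=0$ and killing each summand via the split at index $k$ --- is exactly the proof of that cited fact, carried out explicitly. The only difference is that you make the argument self-contained rather than quoting the general Bregman result.
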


\if0
Indeed, Nagaoka \cite{HN} showed (\ref{5-1}) 
in a more general form
by showing the dually flat structure \cite{AN} 
via Christoffel symbols calculation.
Using (\ref{5-1}) and Proposition \ref{L5-1}, we obtain 
the following corollary.
\begin{corollary}\Label{T5-1}
Given a transition matrix $V$ and 
a mixture family ${\cal M}$ on ${\cal X}^2_V$ with 
constraints $\{g_j=b_j\}_{j=1}^k$,
we define 
$V^*:= \argmin_{W \in {\cal M}}D(W\|V)$.

(1) Any transition matrix $W \in {\cal M}$ satisfies
$D(W\|V)=D(W\|V^*)+D(V^*\|V)$.

(2) The transition matrix $V^*$ is the intersection of 
the mixture family ${\cal M}$ on ${\cal X}^2_V$ 
and the exponential family generated by $V$ and the generator  
$\{g_j\}_{j=1}^k$.
\end{corollary}

Similarly, we have another version of the above corollary.
\begin{corollary}\Label{C15-1}
Given a transition matrix $W$
and an exponential family ${\cal E} \subset {\cal W}_{\cX,W}$ 
with the generator $\{g_j\}$,
we define 
$W_*:= \argmin_{V \in {\cal E}}D(W\|V)$.
Assume that $\sum_{x,x'} g_j(x,x')W_*(x|x')P_{W_*}(x')=b_j$.

(1) Any transition matrix $V \in {\cal E}$ satisfies
$D(W\|V)=D(W\|W_*)+D(W_*\|V)$.

(2) The transition matrix $W_*$ is the intersection of 
the exponential family ${\cal E}$ and 
the mixture family on ${\cal X}^2_W$ with the constraints $\{g_j=b_j\}$.
\end{corollary}

\fi


\subsection{Central limit theorem}\Label{s3-6}
Given an irreducible transition matrix $W$ and a general two-input function $g\in {\cal G}({\cal X}^2_W)$,
we consider 
the random variable $g^n(X^{n+1}) :=\sum_{k=1}^{n}g(X_{k+1},X_k)$ 
when the random variables $X^{n+1}:=(X_{n+1},\ldots, X_1)$ 
are subject to the joint distribution
\begin{align}
(W^{\times n} \times P )(x_n,\ldots, x_1)
:= W(x_{n+1}|x_{n})\cdots W(x_2|x_{1}) P(x_1)
\end{align}
with an arbitrary initial distribution $P$ on ${\cal X}$.

To discuss the expectation and the variance, 
we introduce the notations
$\mathsf{E}_{P,W}$ and $\mathsf{V}_{P,W}$, which describe
the expectation and the variance under the distribution $W^{\times n} \times P$,
respectively.
We also denote the logarithm of the Perron Frobenius eigenvalue of the matrix
$e^{s g(x|x')}W(x|x')$
by $\varphi(s)$.
The following proposition is known as the central limit theorem\footnote{Its detailed review is available in \cite[Remark 7.3]{HW14-2}}.

\begin{proposition}[\cite{Ben-Ari,CLT2,CLT4,Lalley,CLT3,HW14-2}]
The relations 
\begin{align}
\lim_{n \to \infty}
\frac{1}{n}\mathsf{E}_{P,W} [g^n(X^{n+1})] &= \langle u_{{\cal X}}| (W_* g)P_W\rangle 
=\frac{d}{d s}\varphi(s)|_{s=0},
\\
\lim_{n \to \infty}
\frac{1}{n}\mathsf{V}_{P,W} [g^n(X^{n+1})] &= 
\frac{d^2}{d s^2}\varphi(s)|_{s=0}
\end{align}
hold. Further, the random variable
$\frac{1}{\sqrt{n}}( g^n(X^{n+1})- n \frac{d}{d s}\varphi(s)|_{s=0})$
asymptotically obeys 
the zero-mean Gaussian distribution with variance $ \frac{d^2}{d s^2}\varphi(s)|_{s=0}$.
\end{proposition}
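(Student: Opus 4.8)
The plan is to reduce everything to the moment generating function of $g^n(X^{n+1})$ and the analytic perturbation theory of the Perron--Frobenius eigenvalue. First I would record the tilting identity
\begin{align}
\mathsf{E}_{P,W}\bigl[e^{s g^n(X^{n+1})}\bigr]
= \langle u_{{\cal X}}| (W^{(s)})^n P\rangle,
\qquad W^{(s)}(x|x'):= e^{s g(x,x')}W(x|x'),
\end{align}
which follows by expanding the joint distribution $W^{\times n}\times P$ and absorbing each factor $e^{s g(x_{k+1},x_k)}$ into the corresponding transition entry. By construction $W^{(s)}$ is exactly the matrix whose log-Perron--Frobenius eigenvalue is $\varphi(s)$, so its eigenvalue is $\lambda(s)=e^{\varphi(s)}$, and Proposition \ref{L1} already guarantees that $\varphi$ is smooth and convex near $s=0$.

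Next I would use that, since $W$ is irreducible, $W^{(s)}$ is irreducible with the same support, so $\lambda(s)$ is a simple dominant eigenvalue whose spectral projection $\Pi(s)$ depends analytically on $s$ in a complex neighborhood of $0$. Writing $(W^{(s)})^n=\lambda(s)^n\Pi(s)+(\text{subdominant part})$ and pairing with $u_{{\cal X}}$ and $P$ gives
\begin{align}
\mathsf{E}_{P,W}\bigl[e^{s g^n}\bigr]
=\lambda(s)^n\bigl(a(s)+o(1)\bigr),
\qquad
a(s):=\frac{\langle u_{{\cal X}}|\overline{P}^2(s)\rangle\,\langle \overline{P}^3(s)|P\rangle}{\langle \overline{P}^3(s)|\overline{P}^2(s)\rangle},
\end{align}
where $\overline{P}^2(s),\overline{P}^3(s)$ are the right and left Perron--Frobenius eigenvectors of $W^{(s)}$, and $a$ is analytic with $a(0)=1$ (because at $s=0$ the left eigenvector is $u_{{\cal X}}$, the right one is the stationary distribution $P_W$, and $P$ is a probability vector). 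Taking logarithms yields $\log \mathsf{E}_{P,W}[e^{s g^n}]=n\varphi(s)+\log a(s)+o(1)$, uniformly on a neighborhood of $s=0$; Cauchy's integral formula then transfers this to the derivatives, and dividing the first and second $s$-derivatives at $s=0$ by $n$ produces $\tfrac1n\mathsf{E}_{P,W}[g^n]\to\varphi'(0)$ and $\tfrac1n\mathsf{V}_{P,W}[g^n]\to\varphi''(0)$. The identification $\varphi'(0)=\langle u_{{\cal X}}|(W_* g)P_W\rangle$ is the first-order eigenvalue perturbation formula $\lambda'(0)=\langle u_{{\cal X}}|\tfrac{d}{ds}W^{(s)}|_{0}\,P_W\rangle$, using $\tfrac{d}{ds}W^{(s)}|_0=W_* g$, $\lambda(0)=1$, and $\langle u_{{\cal X}}|P_W\rangle=1$.

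For the central limit theorem I would insert the imaginary argument $s=it/\sqrt n$ into the same expansion and compute the characteristic function of $Z_n:=(g^n-n\varphi'(0))/\sqrt n$:
\begin{align}
\mathsf{E}_{P,W}\bigl[e^{it Z_n}\bigr]
= e^{-it\sqrt n\,\varphi'(0)}\exp\bigl(n\varphi(it/\sqrt n)+\log a(it/\sqrt n)+o(1)\bigr).
\end{align}
Taylor expanding $\varphi$ about $0$ with $\varphi(0)=0$ gives $n\varphi(it/\sqrt n)=it\sqrt n\,\varphi'(0)-\tfrac12 t^2\varphi''(0)+O(n^{-1/2})$, while $\log a(it/\sqrt n)\to\log a(0)=0$; hence the characteristic function tends to $e^{-t^2\varphi''(0)/2}$, and L\'evy's continuity theorem yields the claimed Gaussian limit with mean $0$ and variance $\varphi''(0)$.

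The step I expect to be the real obstacle is the periodic case: when $W$ is only irreducible and not aperiodic, $W^{(s)}$ carries further eigenvalues of modulus $\lambda(s)$, so the subdominant part above does not decay but oscillates in $n$, and the clean factor $a(s)$ must be replaced by $a(s)+\sum_{j\neq 0}\omega^{jn}\langle u_{{\cal X}}|\Pi_j(s)P\rangle$. The saving feature is that each coefficient $\langle u_{{\cal X}}|\Pi_j(s)P\rangle$ vanishes at $s=0$ (since $u_{{\cal X}}$ is the left Perron--Frobenius eigenvector and is annihilated by the off-diagonal spectral projections), hence is $O(s)$ by analyticity; evaluated at $s=it/\sqrt n\to0$ these terms are $O(n^{-1/2})$ and perturb neither the cumulant limits nor the limiting characteristic function. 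Making this control uniform for $s$ in a shrinking complex neighborhood of $0$ is the only genuinely delicate point; otherwise the argument is the standard spectral/tilting (Nagaev-type) method, and one may alternatively just cite the references accompanying the statement.
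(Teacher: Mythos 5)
The paper does not prove this proposition: it is imported as a known result, with the citation list \cite{Ben-Ari,CLT2,CLT4,Lalley,CLT3,HW14-2} and a pointer to the detailed review in \cite{HW14-2}, so there is no in-paper argument to compare yours against. Your proposal is the standard spectral/tilting (Nagaev--Guivarc'h) proof, which is essentially the route those references take, and I find it correct: the tilting identity, the identification $\lambda(s)=e^{\varphi(s)}$, the value $a(0)=1$ via $\langle u_{{\cal X}}|P\rangle=1$, the first-order perturbation formula giving $\varphi'(0)=\langle u_{{\cal X}}|(W_*g)P_W\rangle$, and the L\'evy-continuity step are all sound. You also correctly isolate the one genuinely non-routine point, the merely irreducible (periodic) case, and your resolution is right: for a peripheral eigenvalue $\lambda(0)\omega^j$ with $\omega^j\neq1$ one has $u_{{\cal X}}^T\Pi_j(0)=\omega^j u_{{\cal X}}^T\Pi_j(0)$, hence $u_{{\cal X}}^T\Pi_j(0)=0$, so these contributions are $O(s)$ and wash out both in the Ces\`aro limits and at $s=it/\sqrt n$. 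Two small points you should make explicit if you write this up: (i) the passage from $\log \mathsf{E}[e^{sg^n}]=n\varphi(s)+\log a(s)+o(1)$ to the derivatives requires the $o(1)$ to be uniform on a fixed complex disk before Cauchy's formula applies, which follows from the spectral gap being uniform over a compact neighborhood of $s=0$; and (ii) for complex $s$ the quantity $\lambda(s)$ is no longer a Perron--Frobenius eigenvalue but its analytic continuation, so you need that it stays a simple, strictly dominant eigenvalue for $|s|$ small, which holds by continuity of the (isolated) spectrum. Neither is a gap in substance.
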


Now, we consider a more general case, in which, multiple functions 
$\tilde{g}_1, \ldots, \tilde{g}_k \in {\cal G}({\cal X}^2_W)$.
Given $\vec{s}:=(s^1, \ldots, s^k)$, 
we denote the logarithm of the Perron Frobenius eigenvalue of the matrix
$e^{\sum_{i=1}^k s^i \tilde{g}_i(x|x')}W(x|x')$
by $\phi(\vec{s})$.
So, the above lemma can be generalized as follows.

\begin{proposition}[\protect{\cite[Theorem 8.2]{HW-est}}]\Label{P2-9-1}
The random variables
$\frac{1}{\sqrt{n}}( \tilde{g}_j^n(X^{n+1})- 
n \frac{\partial}{\partial s^j} \phi(\vec{s})|_{\theta=0})$
asymptotically obey the Gaussian distribution with average $0$.
The variance is given by the Hessian of $\phi$ at $s=0$, i.e.,
$ \frac{\partial^2}{\partial s^i \partial s^j}\phi(s)|_{s=0}$.
\end{proposition}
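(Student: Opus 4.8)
The plan is to reduce the multivariate statement to the one-dimensional central limit theorem stated just above (the scalar proposition) by means of the Cram\'{e}r--Wold device. First I would fix an arbitrary direction $\vec{c}=(c^1,\ldots,c^k)\in\bR^k$ and introduce the scalar function $g:=\sum_{j=1}^k c^j \tilde{g}_j$, which again lies in ${\cal G}({\cal X}^2_W)$ as a linear combination. Since the additive functional is linear in its generating function, one has $\sum_{j=1}^k c^j \tilde{g}_j^n(X^{n+1})=g^n(X^{n+1})$, so that the linear combination $\sum_{j=1}^k c^j \frac{1}{\sqrt{n}}\big(\tilde{g}_j^n(X^{n+1})-n\frac{\partial}{\partial s^j}\phi(\vec{s})|_{\vec{s}=0}\big)$ is exactly the centered and rescaled additive functional attached to the single function $g$.

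The identity linking the two potentials is the next step. Because $e^{s\,g(x|x')}=e^{\sum_j (s c^j)\tilde{g}_j(x|x')}$, the scalar cumulant function $\varphi_g(s)$ associated with $g$ is precisely the restriction of the multivariate potential to the line through the origin in direction $\vec{c}$, namely $\varphi_g(s)=\phi(s\vec{c})$. Differentiating this relation and applying the chain rule yields $\frac{d}{ds}\varphi_g(s)|_{s=0}=\sum_j c^j \frac{\partial \phi}{\partial s^j}(0)$ and $\frac{d^2}{ds^2}\varphi_g(s)|_{s=0}=\sum_{i,j} c^i c^j \frac{\partial^2 \phi}{\partial s^i \partial s^j}(0)$. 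In particular the centering appearing above coincides with the scalar centering $n\frac{d}{ds}\varphi_g(s)|_{s=0}$ demanded by the one-dimensional proposition, so the two centerings are automatically consistent.

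Applying the scalar central limit theorem to $g$ then shows that the linear combination converges in distribution to a Gaussian of mean $0$ and variance $\sum_{i,j}c^i c^j \frac{\partial^2 \phi}{\partial s^i \partial s^j}(0)=\vec{c}^{\,T}\mathsf{H}\,\vec{c}$, where $\mathsf{H}:=[\frac{\partial^2 \phi}{\partial s^i \partial s^j}(0)]_{i,j}$ is the Hessian of $\phi$ at $\vec{s}=0$. Since this is exactly the $N(0,\vec{c}^{\,T}\mathsf{H}\,\vec{c})$ law and the argument holds for every $\vec{c}$, the Cram\'{e}r--Wold theorem delivers joint convergence of the whole vector to the multivariate Gaussian with mean $0$ and covariance matrix $\mathsf{H}$, which is the asserted statement.

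I expect the only point requiring care to be the passage from the one-dimensional limits back to the joint limit. The Cram\'{e}r--Wold device needs each scalar limit to be Gaussian with the stated variance (supplied directly by the scalar proposition) and needs these one-dimensional marginals to be mutually consistent with a single Gaussian vector; consistency holds here because the map $\vec{c}\mapsto \vec{c}^{\,T}\mathsf{H}\,\vec{c}$ is exactly the quadratic form of the candidate covariance $\mathsf{H}$. No further analytic work is needed, since the analyticity of the Perron--Frobenius eigenvalue and the scalar limit theorem itself are already contained in the cited one-dimensional result; the degenerate case, where some direction makes $g$ a coboundary and the variance vanishes, is covered automatically, the limit then being a degenerate Gaussian.
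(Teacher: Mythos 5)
Your argument is correct. Note that the paper itself does not prove Proposition \ref{P2-9-1}: it imports the statement verbatim from \cite[Theorem 8.2]{HW-est}, immediately after stating the scalar central limit theorem, with the remark that ``the above lemma can be generalized.'' Your proposal supplies exactly the missing bridge, and it does so by the standard route: the linearity of the additive functional gives $\sum_j c^j \tilde{g}_j^n = g^n$ for $g=\sum_j c^j\tilde{g}_j$, the identity $\varphi_g(s)=\phi(s\vec{c})$ ties the scalar cumulant function to the restriction of the multivariate potential to a line, the chain rule makes the centerings and variances match, and the Cram\'{e}r--Wold device upgrades the one-dimensional limits to joint convergence with covariance $\mathsf{H}=[\partial^2\phi/\partial s^i\partial s^j(0)]$. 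This is essentially the same mechanism as the spectral (Perron--Frobenius perturbation) proof in the cited source, where one evaluates the joint characteristic function at a point $\vec{t}$, which is the same as evaluating the characteristic function of the projection $\langle\vec{t},\cdot\rangle$; your presentation just makes the reduction to the already-stated scalar proposition explicit. You were also right to flag the two delicate points, namely the consistency of the one-dimensional marginals with a single Gaussian vector (guaranteed because $\vec{c}\mapsto\vec{c}^{\,T}\mathsf{H}\vec{c}$ is the quadratic form of one fixed matrix) and the degenerate directions where the combination lies in ${\cal N}({\cal X}^2_W)$ and the limit is a point mass; neither causes trouble.
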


Now, we consider the exponential family with generator $g_1, \ldots, g_k \in {\cal G}({\cal X}_W)$.
Since the expectation of 
 $\frac{1}{n}\mathsf{E}_{P,W_{\vec{\theta}}} [g_i^n(X^{n+1})] $
converges to the expectation parameter 
$\frac{\partial }{\partial \theta^i}\phi(\vec{\theta})
|_{\vec{\theta}=0} =\eta_i(\vec{\theta})$,
the random variable 
$\frac{g_i^n(X^{n+1})}{n}$ works as an estimator of the expectation parameter.
Its asymptotic variance is the Hessian of $\phi$,
which is the minimum variance under suitable conditions for estimator \cite[Section 8.2]{HW-est}.

\section{Partial observation model}\Label{s4}
In this section, we consider how to estimate the parameter 
describing the transition matrix when our observation is restricted to some of generators.
As seen in Section \ref{s6B1}, 
this model can be regarded as a generalization of estimation of hidden Markov process.
That is, we extend the contents of Subsection \ref{s3-6} to such a restricted case.
For this purpose, we introduce additional notations.
Given positive integers $l_1,l_2,l_3$, 
we denote the vector spaces ${\cal V}_j:=\mathbb{R}^{l_j}$ for $j=1,2,3$,
and the vector spaces 
${\cal V}_{1,2,3}:={\cal V}_1\oplus {\cal V}_2\oplus {\cal V}_3$, and 
${\cal V}_{1,2}:={\cal V}_1\oplus {\cal V}_2$, and 
${\cal V}_{2,3}:={\cal V}_2\oplus {\cal V}_3$.
In the following, we regard the spaces ${\cal V}_j$ and ${\cal V}_{i,j}$
as subspaces of ${\cal V}_{1,2,3}$, respectively. 
We denote the coordinate projection from ${\cal V}_{1,2,3}$ to the subspaces ${\cal V}_j$ and ${\cal V}_{i,j}$ by
the $l_j\times (l_1+l_2+l_3)$ matrix $P_j$ and 
the $(l_i+l_j)\times (l_1+l_2+l_3)$ matrix $P_{i,j}$, respectively. 
For a vector $\vec{v} \in {\cal V}_{1,2,3}$, we denote $P_j \vec{v}$ and $P_{i,j} \vec{v}$
by $\vec{v}_j$ and $\vec{v}_{i,j}$, respectively.

Let $W$ be an irreducible transition matrix on ${\cal X}$,
and $g_1, \ldots, g_{l_1+l_2+l_3}$ be linearly independent functions as elements of
${\cal G}({\cal X}^2_{W})/{\cal N}({\cal X}^2_{W})$.
So, we define the potential function $\phi$ by using the generator $\{g_1, \ldots, g_{l_1+l_2+l_3}\}$
in the way given in Section \ref{s3-2}.
Let ${\cal W}:=\{W_{\vec{\theta}_1,\vec{\theta}_{2,3}}\}$ be
the exponential family of transition matrices on ${\cal X}$ by $W$ with the above generator.
Then, we define the expectation parameter 
$\eta_j(\vec{\theta}_1,\vec{\theta}_{2,3}):=
\frac{\partial \phi}{\partial \theta^j}(\vec{\theta}_1,\vec{\theta}_{2,3})$.
Now, we consider the exponential subfamily 
${\cal W}_2:=\{W_{0,\vec{\theta}_{2,3}}\}_{\Theta_{2,3} \in \Theta_{2,3}}$,
where the parametric space $\Theta_{2,3}\subset {\cal V}_{2,3}$ will be discussed in a careful way.

Now, we assume that we can observe only the sample mean 
of the part of generators $g_1, \ldots, g_{l_1+l_2}$ with $n+1$ observations.
That is, we can observe the sample mean 
$\vec{Y}_{1,2}^n:=\big(\frac{{g}_1(X^{n+1})}{n}, \ldots,\frac{{g}_{l_1+l_2}(X^{n+1})}{n}\big)$,
where ${g}_j(X^{n+1}):=\sum_{i=1}^n {g}_j(X_{i+1}, X_{i})$.
So, Proposition \ref{P2-9-1} guarantees that
the expectation of the sample mean $\vec{Y}_{1,2}^n$ 
converges to $\vec{\eta}_{1,2}(0,\vec{\theta}_{2,3})$ 
as $n$ goes to infinity with whatever initial distribution
when the true transition matrix is $W_{0,\vec{\theta}_{2,3}}$. 
Now, we propose an estimator 
$\hat{\theta}_{2,3}(\vec{Y}_{1,2}^n) $ for $\vec{\theta}_{2,3}$
from the observed sample mean $\vec{Y}_{1,2}^n$ as follows (Fig. \ref{F1}).
\begin{align}
\hat{\theta}_{2,3}(\vec{Y}_{1,2}^n) :=
\argmin_{\vec{\theta}_{2,3}' \in \Theta_{2,3}}
\min_{\vec{\eta}_3' \in {\cal V}_3}
D\big(W_{\vec{\theta}(\vec{Y}_{1,2}^n,\vec{\eta}_3')}\big\|W_{0,\vec{\theta}_{2,3}'}\big).
\Label{eq6-9-1}
\end{align}
For the definition of $\vec{\theta}(\vec{Y}_{1,2}^n,\vec{\eta}_3')$, see \eqref{eq1-28-2}.
Here, $\argmin_{\vec{\theta}_{2,3}' \in \Theta_{2,3}}$
is not unique in general.
In this case, we choose one of elements $\vec{\theta}_{2,3}' \in \Theta_{2,3}$ attaining the minimum.

\begin{figure}[htbp]
\begin{center}
\scalebox{1}{\includegraphics[scale=0.5]{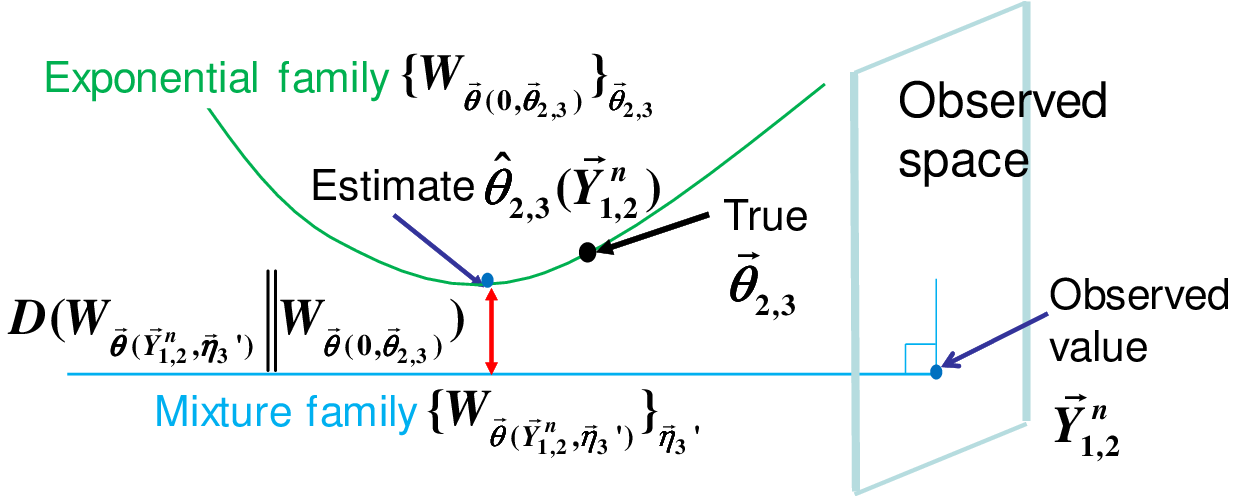}}
\end{center}
\caption{Estimator of partial observation model}
\Label{F1}
\end{figure}%

This estimator can be calculated approximately by using the em-algorithm in the following way.
Firstly, we fix the initial point 
$\vec{\theta}_{2,3;0} \in \Theta_{2,3}$.
Then, repeating the following procedure, we calculate 
$\vec{\theta}_{2,3;j} \in \Theta_{2,3}$ from 
$\vec{\theta}_{2,3;j-1} \in \Theta_{2,3}$ iteratively as follows.
\begin{description}
\item[E-step]
We find the e-minimum point $\vec{\eta}_{3;j}:=
\argmin_{\vec{\eta}_{3}}
D\big(W_{\vec{\theta}(\vec{Y}_1^n,\vec{\eta}_3)}\big\|W_{0,\vec{\theta}_{2,3;j-1}}\big)$.
\item[M-step]
Next, we find m-minimum point $\vec{\theta}_{2,3;j}:=
\argmin_{\vec{\theta}_{2,3}}
D\big(W_{\vec{\theta}(\vec{Y}_{1,2}^n,\vec{\eta}_{3;j})}\big\|W_{0,\vec{\theta}_{2,3}}\big)$.
\end{description}
The implementation is discussed in Appendix \ref{ASImp}.
Since the maps 
$\vec{\eta}_3 \mapsto
D\big(W_{\vec{\theta}(\vec{Y}_1^n,\vec{\eta}_{3})}\big\|W_{0,\vec{\theta}_{2,3;j-1}}\big)$
and
$\vec{\theta}_{2,3} \mapsto
D\big(W_{\vec{\theta}(\vec{Y}_{1,2}^n,\vec{\eta}_{3;j})}\big\|W_{0,\vec{\theta}_{2,3}}\big)$
are convex due to Proposition \ref{L1-14}, 
these can be calculated by the convex optimization.

Pythagorean theorem (Proposition \ref{P1-15}) guarantees the equations
\begin{align}
&D\big(W_{\vec{\theta}(\vec{Y}_{1,2}^n,\vec{\eta}_{3;j})}\big\|W_{0,\vec{\theta}_{2,3;j-1}}\big)
=
D\big(W_{\vec{\theta}(\vec{Y}_{1,2}^n,\vec{\eta}_{3;j-1})}\big\|W_{0,\vec{\theta}_{2,j-1}}\big)
-
D\big(W_{\vec{\theta}(\vec{Y}_{1,2}^n,\vec{\eta}_{3;j-1})}\big\|
W_{\vec{\theta}(\vec{Y}_{1,2}^n,\vec{\eta}_{3;j})}\big), \\
&D\big(W_{\vec{\theta}(\vec{Y}_{1,2}^n,\vec{\eta}_{3;j})}\big\|W_{0,\vec{\theta}_{2,3;j}}\big)
=
D\big(W_{\vec{\theta}(\vec{Y}_{1,2}^n,\vec{\eta}_{3;j})}\big\|W_{0,\vec{\theta}_{2,3;j-1}}\big)
-
D\big(W_{0,\vec{\theta}_{2,3;j}}\big\|W_{0,\vec{\theta}_{2,3;j-1}}\big).
\end{align}
So, repeating this procedure, we can achieve a local minimum value.

For a given point $\vec{\theta}_{2,3;o} \in \Theta_{2,3}$,  
we fix $\vec{\eta}_{1,o}:=\vec{\eta}_1(0,\vec{\theta}_{2,3;o})$.
We denote the Hessian for the potential $\phi$ with respect to the parameters
$(\vec{\theta}_1,\vec{\theta}_{2,3})$ at $(0,\vec{\theta}_{2,3;o})$
by $\mathsf{H}_{0,\vec{\theta}_{2,3;o}}[\phi] $.
We define the $(l_1+l_2)\times (l_2+l_3)$ Jacobi matrix $A(\vec{\theta}_{2,3;o})$ whose 
$(i,j)$ component is $\frac{\partial \eta_i(0,\vec{\theta}_{2,3;o})}{\partial \theta^{j}}$
($i=1, \ldots, l_1+l_2$ and $j=l_1+1, \ldots, l_1+l_2+l_3$).
Then, the Jacobi matrix $A(\vec{\theta}_{2,3;o})$ is calculated to be 
$ P_{1,2} \mathsf{H}_{0,\vec{\theta}_{2,3;o}}[\phi] P_{2,3}^T$.

Then, we define the projective Fisher information matrix 
$\tilde{\mathsf{H}}_{\vec{\theta}_{2,3;o}} $
at $\vec{\theta}_{2,3;o} $ by 
\begin{align}
\tilde{\mathsf{H}}_{\vec{\theta}_{2,3;o}} :=&
A(\vec{\theta}_{2,3;o})^T 
\Big(
 \mathsf{H}_{0,\vec{\theta}_{2,3;o}}[\phi] ^{-1} \nonumber \\
&-
\mathsf{H}_{0,\vec{\theta}_{2,3;o}}[\phi]^{-1} P_3^T
(P_{3} \mathsf{H}_{0,\vec{\theta}_{2,3;o}}[\phi]^{-1} P_{3}^T)^{-1}
P_3 \mathsf{H}_{0,\vec{\theta}_{2,3;o}}[\phi]^{-1}
\Big)
A(\vec{\theta}_{2,3;o}).\Label{FTE}
\end{align}

Then, we have the following lemma, whose proof is given in Appendix \ref{s4-3}.
\begin{lemma}\Label{6-13-thL}
The following conditions are equivalent.
\begin{description}
\item[B1]
The rank of $A(\vec{\theta}_{2,3;o})$ is $l_2+l_3$.
\item[B2]
The matrix $\tilde{\mathsf{H}}_{\vec{\theta}_{2,3;o}} $
is invertible.
\end{description}
\end{lemma}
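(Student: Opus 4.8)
The plan is to show that \textbf{B2} reduces, via the strict positivity of the Fisher information, to the full column-rank condition \textbf{B1} on $A:=A(\vec{\theta}_{2,3;o})$. Abbreviate $\mathsf{H}:=\mathsf{H}_{0,\vec{\theta}_{2,3;o}}[\phi]$. Since the generators $g_1,\dots,g_{l_1+l_2+l_3}$ are linearly independent in ${\cal G}({\cal X}^2_{W})/{\cal N}({\cal X}^2_{W})$, Proposition \ref{L1-14-2} shows that $\mathsf{H}$ is strictly positive definite; hence $\mathsf{H}^{-1}$ is strictly positive definite as well, and its principal block $P_3\mathsf{H}^{-1}P_3$ is strictly positive on ${\cal V}_3$. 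In particular $(P_3\mathsf{H}^{-1}P_3)^{-1}_3$ is well defined, so $\tilde{\mathsf{H}}_{\vec{\theta}_{2,3;o}}$ makes sense; this positivity is exactly what the whole argument rests on.

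First I would isolate the quadratic form that effectively sits between $A^{T}$ and $A$. The map $A=P_{1,2}\mathsf{H}P_{2,3}$ has range inside ${\cal V}_{1,2}$, so $P_3A=0$ and $P_{1,2}A=A$. Writing $M:=\mathsf{H}^{-1}-(P_3\mathsf{H}^{-1}P_3)^{-1}_3$ and noting that the subtracted matrix is supported on ${\cal V}_3$, for every $w$ in the range of $A$ one has $P_3w=0$ and therefore
\begin{align}
w^{T}Mw=w^{T}\mathsf{H}^{-1}w-(P_3w)^{T}(P_3\mathsf{H}^{-1}P_3)^{-1}_3(P_3w)=w^{T}\mathsf{H}^{-1}w>0
\end{align}
whenever $w\ne0$, because $\mathsf{H}^{-1}$ is strictly positive definite. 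Equivalently $\tilde{\mathsf{H}}_{\vec{\theta}_{2,3;o}}=A^{T}(P_{1,2}\mathsf{H}^{-1}P_{1,2})A$, whose middle factor is a principal submatrix of the positive-definite matrix $\mathsf{H}^{-1}$ and is thus strictly positive definite on ${\cal V}_{1,2}$, hence on the range of $A$.

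With the middle form known to be positive definite, the equivalence is the standard fact that $A^{T}MA$ is invertible if and only if $A$ has full column rank $l_2+l_3$. For $\textbf{B1}\Rightarrow\textbf{B2}$: if $\rank A=l_2+l_3$ then $A$ is injective, so any nonzero $\vec{v}\in{\cal V}_{2,3}$ gives $A\vec{v}\ne0$ and
\begin{align}
\vec{v}^{T}\tilde{\mathsf{H}}_{\vec{\theta}_{2,3;o}}\vec{v}=(A\vec{v})^{T}M(A\vec{v})>0,
\end{align}
so $\tilde{\mathsf{H}}_{\vec{\theta}_{2,3;o}}$ is positive definite, hence invertible. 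For the contrapositive of $\textbf{B2}\Rightarrow\textbf{B1}$: if $\rank A<l_2+l_3$ there is a nonzero $\vec{v}$ with $A\vec{v}=0$, whence $\tilde{\mathsf{H}}_{\vec{\theta}_{2,3;o}}\vec{v}=0$ and $\tilde{\mathsf{H}}_{\vec{\theta}_{2,3;o}}$ is singular. The two directions together give $\textbf{B1}\Leftrightarrow\textbf{B2}$.

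The one step I would treat with care is the positive definiteness of the effective middle form, since this is what upgrades the automatic inequality $\rank(A^{T}MA)\le\rank A$ into an exact rank identity; without it $A^{T}MA$ could drop rank. This positivity hinges only on the strict convexity of the potential $\phi$ at $(0,\vec{\theta}_{2,3;o})$, that is, on the linear independence of the generators through Proposition \ref{L1-14-2}, together with the elementary facts that principal blocks and inverses of principal blocks of a positive-definite matrix are again positive definite. Everything after that is routine linear algebra.
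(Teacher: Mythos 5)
Your proof is correct, and it is considerably shorter than the paper's, though it rests on reading the definition \eqref{FTE} literally. The paper proves the lemma in Appendix \ref{s4-3} by a different route: it first establishes Lemma \ref{6-13-thC} (strict positive definiteness of the Schur complement $C := P_{1,2}\mathsf{J}P_{1,2}-(P_{1,2}\mathsf{J}P_3)(P_3\mathsf{J}P_3)_3^{-1}(P_3\mathsf{J}P_{1,2})$ of $\mathsf{J}:=\mathsf{H}_{0,\vec{\theta}_{2,3;o}}[\phi]^{-1}$, via the Cauchy--Schwarz inequality), then builds an explicit block factorization $B$ of $\mathsf{J}$ and derives the identity $\tilde{\mathsf{H}}_{\vec{\theta}_{2,3;o}}=(CA)^T(CA)$, from which $\rank \tilde{\mathsf{H}}_{\vec{\theta}_{2,3;o}}=\rank A$ follows because $C$ is invertible on ${\cal V}_{1,2}$. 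You instead observe that $P_3A=0$ annihilates the subtracted term in \eqref{FTE}, so that $\tilde{\mathsf{H}}_{\vec{\theta}_{2,3;o}}=A^T(P_{1,2}\mathsf{H}^{-1}P_{1,2})A$ with a middle factor that is a principal block of the positive-definite matrix $\mathsf{H}^{-1}$; the rank equivalence is then immediate, and you avoid the factorization machinery entirely. Both arguments use the same underlying mechanism, namely sandwiching $A$ between a form that is strictly positive definite on ${\cal V}_{1,2}\supset \mathrm{range}\,A$. One caveat worth flagging: the paper's own computation (the chain ending at \eqref{01-20-e}) actually identifies $\tilde{\mathsf{H}}_{\vec{\theta}_{2,3;o}}$ with $A^T\bigl(\mathsf{J}-\mathsf{J}P_3(P_3\mathsf{J}P_3)_3^{-1}P_3\mathsf{J}\bigr)A$, i.e., it treats the subtracted matrix as the full Schur-complement correction rather than the bare $(P_3\mathsf{J}P_3)_3^{-1}$ printed in \eqref{FTE}. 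Under that reading your intermediate identity $\tilde{\mathsf{H}}_{\vec{\theta}_{2,3;o}}=A^T\mathsf{H}^{-1}A$ would fail, since the correction term is no longer supported on ${\cal V}_3$ and is not killed by $P_3A=0$; but your conclusion survives unchanged, because the relevant middle form restricted to ${\cal V}_{1,2}$ is then the Schur complement $C$, which is again strictly positive definite there --- exactly the content of the paper's Lemma \ref{6-13-thC}. So your proof is valid for the definition as printed, and the argument is robust to the alternative reading once the positivity input is replaced accordingly.
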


We call the condition of Lemma \ref{6-13-thL} the {\it Jacobi matrix condition}
because the matrix $A(\vec{\theta}_{2,3;o})$ is the Jacobi matrix.
Now, we impose the following condition to the parametric space $ \Theta_{2,3}$;
\begin{description}
\item[C1]
The parametric space $\Theta_{2,3}$ is an open connected subset of $\bR^{l_2+l_3}$.
\item[C2]
The map $\vec{\theta}_{2,3} \in \Theta_{2,3} \mapsto \vec{\eta}_{1,2}(0,\vec{\theta}_{2,3})$ is one-to-one.
\item[C3]
The matrix $A(\vec{\theta}_{2,3})$ has rank $l_2+l_3$ for any element $\vec{\theta}_{2,3} \in \Theta_{2,3} $.
That is, the projective Fisher information matrix 
$\tilde{\mathsf{H}}_{\vec{\theta}_{2,3;o}} $
is invertible.
\end{description}
To satisfy these conditions, the number $l_3$ needs to be smaller than $l_1$.
The feasibility of the above assumption will be discussed in Theorem \ref{T3-29}.

Then, to refine Theorem \ref{PT1}, as an extension of Proposition \ref{P2-9-1}, we obtain the following theorem, whose proof is given in Appendix \ref{s4-3}.
\begin{theorem}\Label{6-13-th}
Assume that the exponential family $ \{ W_{0,\vec{\theta}_{2,3}}\}_{\vec{\theta}_{2,3} \in \Theta_{2,3}}$
satisfies the conditions {\bf C1}, {\bf C2}, and {\bf C3}.
\mh{If} the true natural parameter is $\vec{\theta}_{2,3;o}$,
then the random variable 
$\sqrt{n}(\hat{\theta}_{2,3}(\vec{Y}_{1,2}^n) - \vec{\theta}_{2,3;o})$
asymptotically obeys
the zero-mean Gaussian distribution of covariance matrix 
$
\tilde{\mathsf{H}}_{\vec{\theta}_{2,3;o}}^{-1}$.
\if0
(2) The mean square error matrix of
our estimator $\hat{\theta}_{2,3}(\vec{Y}_{1,2}^n)$
is asymptotically approximated to 
$\frac{1}{n} \tilde{\mathsf{H}}_{\vec{\theta}_{2,3;o}}^{-1}+ o(\frac{1}{n})$.
\fi
\end{theorem}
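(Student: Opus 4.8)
The plan is to combine the central limit theorem for the observed empirical means (Proposition \ref{P2-9-1}) with a delta-method analysis of the estimator \eqref{eq6-9-1}. Write $\vec{\eta}_{1,2;o}:=\vec{\eta}_{1,2}(0,\vec{\theta}_{2,3;o})$ and abbreviate $G:=\mathsf{H}_{0,\vec{\theta}_{2,3;o}}[\phi]$ for the full Hessian at the true point. Proposition \ref{P2-9-1}, applied to the observed generators $g_1,\ldots,g_{l_1+l_2}$ at the base matrix $W_{0,\vec{\theta}_{2,3;o}}$, shows that $\sqrt{n}\,(\vec{Y}_{1,2}^n-\vec{\eta}_{1,2;o})$ converges to a centred Gaussian whose covariance is the restriction $G_{1,2}:=P_{1,2}\,G\,P_{1,2}$ of the Hessian to the observed directions ${\cal V}_{1,2}$. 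The ergodic convergence underlying that statement also gives $\vec{Y}_{1,2}^n\to\vec{\eta}_{1,2;o}$, so by condition C2 the minimized divergence tends to $0$ and $\hat{\theta}_{2,3}(\vec{Y}_{1,2}^n)\to\vec{\theta}_{2,3;o}$. This consistency is what lets us localise the subsequent analysis to a neighbourhood of the true value.

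Next I would characterise the estimator through the stationarity conditions of the double minimization. Using the Legendre/Bregman form \eqref{5-28-1}, the objective can be written as $f(\vec{y},\vec{\theta}_{2,3}',\vec{\eta}_3')=\nu(\vec{y},\vec{\eta}_3')+\phi(0,\vec{\theta}_{2,3}')-\langle\vec{\theta}_2',\vec{y}_2\rangle-\langle\vec{\theta}_3',\vec{\eta}_3'\rangle$, where $\vec{y}=\vec{Y}_{1,2}^n$; this is convex separately in $\vec{\eta}_3'$ and in $\vec{\theta}_{2,3}'$ by Proposition \ref{L1-14}, so the m- and e-steps are convex optimisations and their common fixed point is the joint critical point. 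Setting the three gradients to zero yields the matching conditions $\eta_2(0,\vec{\theta}_{2,3}')=\vec{y}_2$, $\eta_3(0,\vec{\theta}_{2,3}')=\vec{\eta}_3'$ and $\theta_3(\vec{y},\vec{\eta}_3')=\vec{\theta}_3'$; that is, the m-projected point and the model point share the expectation parameters $\eta_2,\eta_3$ and the natural parameter $\theta_3$, which is exactly the fixed-point relation produced by the Pythagorean splitting of Proposition \ref{P1-15}. At $\vec{y}=\vec{\eta}_{1,2;o}$ this system is solved by $\vec{\theta}_{2,3}'=\vec{\theta}_{2,3;o}$ with zero divergence.

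I would then differentiate the matching system. The Jacobi matrix condition (Lemma \ref{6-13-thL}, condition C3) is precisely what makes the Hessian of $f$ in $(\vec{\theta}_{2,3}',\vec{\eta}_3')$ nonsingular at the true point, so the implicit function theorem supplies a $C^1$ solution map and the expansion $\hat{\theta}_{2,3}-\vec{\theta}_{2,3;o}=J\,(\vec{Y}_{1,2}^n-\vec{\eta}_{1,2;o})+o_P(n^{-1/2})$. Linearising the three conditions and eliminating $\delta\vec{\eta}_3$, using repeatedly that the Hessian of $\nu$ equals $G^{-1}$ together with the block identities $G\,G^{-1}=G^{-1}G=I$, collapses the relation to a square linear system $S\,A(\vec{\theta}_{2,3;o})\,\delta\vec{\theta}_{2,3}=S\,\delta\vec{Y}_{1,2}$ for an explicit weight $S$ built from the block-$3$ rows of $G^{-1}$; hence $J=(S\,A)^{-1}S$. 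The delta method then gives the asymptotic covariance $\tfrac1n\,J\,G_{1,2}\,J^{T}$, and the limit law is Gaussian.

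The last and most delicate step is the linear-algebra identity $J\,G_{1,2}\,J^{T}=\tilde{\mathsf{H}}_{\vec{\theta}_{2,3;o}}^{-1}$, i.e. that the divergence-based estimator is asymptotically efficient with variance the inverse of the projective Fisher information \eqref{FTE}. I expect this reconciliation to be the main obstacle: one must show that the weight $S$ coincides, up to an invertible factor that cancels in $J$, with the efficient weight $A(\vec{\theta}_{2,3;o})^{T}G_{1,2}^{-1}$, which is where the Schur-complement structure carried by $\mathsf{H}^{-1}-(P_{3}\mathsf{H}^{-1}P_{3})^{-1}_3$ enters through the block-inversion formula for $G^{-1}$. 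Two further technical points need care: the non-uniqueness of $\argmin_{\vec{\theta}_{2,3}'}$ noted after \eqref{eq6-9-1}, which is controlled by first establishing consistency so that only the local branch selected by the implicit function theorem is relevant; and the base-point dependence of the central-limit covariance, whose variation over the neighbourhood contributes only at order $o(n^{-1/2})$ to the centred, rescaled estimator and hence leaves the limiting Gaussian unchanged.
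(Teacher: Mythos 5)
Your route is genuinely different from the paper's. You work with the first-order (stationarity) conditions of the double minimization, invoke the implicit function theorem to get a linearization $\hat{\theta}_{2,3}-\vec{\theta}_{2,3;o}=J(\vec{Y}_{1,2}^n-\vec{\eta}_{1,2;o})+o_P(n^{-1/2})$, and finish by the delta method; the paper instead changes coordinates by an explicit linear map $B$ (built from the Schur complement $C=P_{1,2}\mathsf{J}P_{1,2}-(P_{1,2}\mathsf{J}P_3)(P_3\mathsf{J}P_3)_3^{-1}(P_3\mathsf{J}P_{1,2})$ of $\mathsf{J}=\mathsf{H}_{0,\vec{\theta}_{2,3;o}}[\phi]^{-1}$) which simultaneously turns the local divergence into the squared Euclidean norm and whitens the CLT noise to covariance $P_{1,2}$. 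In those coordinates the double minimization becomes literally an orthogonal least-squares projection onto the image ${\cal V}_4$ of $P_{1,2}B\mathsf{H}_{0,\vec{\theta}_{2,3;o}}[\phi]P_{2,3}$, so asymptotic normality and the covariance formula come out in one stroke. Your derivation of the matching conditions $\eta_2(0,\vec{\theta}_{2,3}')=\vec{y}_2$, $\eta_3(0,\vec{\theta}_{2,3}')=\vec{\eta}_3'$, $\theta_3(\vec{y},\vec{\eta}_3')=\vec{\theta}_3'$ from the Legendre form \eqref{5-28-1} is correct, and your identification of the limiting covariance of $\sqrt{n}(\vec{Y}_{1,2}^n-\vec{\eta}_{1,2;o})$ as $P_{1,2}\mathsf{H}_{0,\vec{\theta}_{2,3;o}}[\phi]P_{1,2}$ matches what the paper uses.

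However, there is a genuine gap at exactly the point you flag as ``the main obstacle'': the identity $J\,G_{1,2}\,J^{T}=\tilde{\mathsf{H}}_{\vec{\theta}_{2,3;o}}^{-1}$ is announced but never proved. This is not a routine loose end --- everything preceding it would establish asymptotic normality with \emph{some} covariance, whereas the entire content of the theorem is that the covariance equals the inverse of the projective Fisher information. Concretely, you must show that the weight $S$ produced by eliminating $\delta\vec{\eta}_3$ from the linearized stationarity system (whose rows are $P_2$ together with $P_3\mathsf{J}P_{1,2}$) equals an invertible matrix times $A(\vec{\theta}_{2,3;o})^{T}(P_{1,2}\mathsf{H}_{0,\vec{\theta}_{2,3;o}}[\phi]P_{1,2})^{-1}$, so that $(SA)^{-1}S$ is the generalized-least-squares weight; this requires the block-inversion identity $(P_{1,2}\mathsf{H}_{0,\vec{\theta}_{2,3;o}}[\phi]P_{1,2})^{-1}=C$ together with $P_3=P_3\mathsf{J}\mathsf{H}_{0,\vec{\theta}_{2,3;o}}[\phi]P_{2,3}$-type cancellations, and until it is written out the proof is incomplete. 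For comparison, the paper disposes of this step by construction: the relations $B^{T}B=\mathsf{J}$ and $B^{T}P_3B=\mathsf{J}P_3(P_3\mathsf{J}P_3)^{-1}_3P_3\mathsf{J}$ reduce the covariance of the projected least-squares solution directly to $\big((P_{1,2}B\mathsf{H}_{0,\vec{\theta}_{2,3;o}}[\phi]P_{2,3})^{T}(P_{1,2}B\mathsf{H}_{0,\vec{\theta}_{2,3;o}}[\phi]P_{2,3})\big)^{-1}=\tilde{\mathsf{H}}_{\vec{\theta}_{2,3;o}}^{-1}$, which is the computation \eqref{01-20-e}. A secondary soft spot is consistency: condition C2 gives injectivity of $\vec{\theta}_{2,3}\mapsto\vec{\eta}_{1,2}(0,\vec{\theta}_{2,3})$ on an open set, which by itself does not force the global $\argmin$ over $\Theta_{2,3}$ to converge to $\vec{\theta}_{2,3;o}$ without some additional properness or localization argument; since you use consistency to dismiss the non-uniqueness of the $\argmin$, this should be stated as an assumption or argued separately.
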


To discuss the validity of our model or our estimate, 
as a precise statement of Theorem \ref{T26},
we prepare the following theorem, whose proof is given in Appendix \ref{s4-3}.

\begin{theorem}\Label{6-13-thB}
We assume the following conditions
\begin{description}
\item[D1] $\Theta_{2,3}$ is an open set.
\item[D2] The neighborhood of any element of  $\Theta_{2,3}$ satisfies the same condition as Theorem \ref{6-13-th},
\item[D3]
The integer $\rank A(\vec{\theta}_{2,3})$ is a constant value for 
$\vec{\theta}_{2,3}\in \Theta_{2,3}$. 
So, we define the integer $l[\Theta_{2,3}]:=\rank A(\vec{\theta}_{2,3})-l_2$.
 \item[D4] The true parameter belongs to $\Theta_{2,3}$.
 \end{description}
Then, the random variable 
$2n \min_{\vec{\theta}_{2,3}' \in \Theta_{2,3}}
\min_{\vec{\eta}_3' \in {\cal V}_3}
D(W_{\vec{\theta}(\vec{Y}_{1,2}^n,\vec{\eta}_3')}\|W_{0,\vec{\theta}_{2,3}'})$
is subject to $\chi^2$-distribution with degree 
$l_1-l[\Theta_{2,3}]$.
\end{theorem}

Under the assumption of Theorem \ref{PT1},
if the true transition matrix contains the model ${\cal W}$
and the number $n$ of observations is sufficiently large,
the value $2n \min_{\vec{\eta}_3 \in {\cal V}_3}
D(W_{\vec{\theta}(\vec{Y}_{1,2}^n,\vec{\eta}_3)}\|W_{0,\hat{\theta}_{2,3}(\vec{Y}_{1,2}^n)})$
is approximately subject to $\chi^2$-distribution with degree 
$l_1-l[\Theta_{2,3}]$.
If this value obtained by the above em-algorithm 
is very far from typical values in $\chi^2$-distribution with degree 
$l_1-l[\Theta_{2,3}]$,
we need to perform the above em-algorithm again with another initial value.
If the value still does not the above condition after several trials of the em-algorithm,
we need to consider the possibility that our model ${\cal W}$ is not correct.

Hence, when the number of observation $n$ is sufficiently large,
the random variable given in Theorem \ref{T26} can be used for the $\chi^2$-test.
Thus, we can choose a suitable model among several models when we fixed a certain significance level $\alpha>0$.
We choose the model with the minimum dimension 
among models passing this $\chi^2$-test with the significance level $\alpha$.
Using Theorem \ref{6-13-thB}, 
we can apply $\chi^2$-test to the hypothesis testing with
\begin{description}
\item[H0: Null hypothesis]
The true transition matrix is contained in our model ${\cal W}$.
\item[H1: Alternative hypothesis]
The true transition matrix is not contained in our model ${\cal W}$.
\end{description}

When the risk probability is $\alpha$ and 
the observed value\par\noindent 
$\min_{\vec{\eta}_{3}' \in {\cal V}_3}
D(W_{\vec{\theta}(\vec{Y}_{1,2}^n,\vec{\eta}_3')}\|W_{0,\hat{\theta}_{2,3}(\vec{Y}_{1,2}^n)})$
is greater than $F_{l_1-l[\Theta_{2,3}]}^{-1}(1-\alpha)$,
we can reject the null hypothesis $H0$,
where $F_{l}$ is the cumulative distribution function of $\chi^2$-distribution with degree $l$.
In other words,
when we observe the value 
$\chi^2:=\min_{\vec{\eta}_3' \in {\cal V}_3}
D(W_{\vec{\theta}(\vec{Y}_{1,2}^n,\vec{\eta}_3')}\|W_{0,\hat{\theta}_{2,3}(\vec{Y}_{1,2}^n)})$,
we can reject the null hypothesis $H0$ with risk probability
$1-F_{l_1-l[\Theta_{2,3}]}(\chi^2)$.
In this way, we can estimate the validity of our model even though the true point is a singular point. 

\section{$k-1$-memory transition matrices}\Label{s4-5}
\subsection{$k-1$-memory transition matrix and joint distribution}\Label{s4-5-1}
To apply the partial observation model to the hidden Markovian model,
we consider the case when the distribution of the outcome on ${\cal X}$ depends on the previous $k-1$ outcomes.
Such a sequence is called a Markovian chain with order $k-1$, and is given as a $k-1$-memory transition matrix on ${\cal X}$,
which is described as
$W=\{W(x_{k}|x_{k-1}, \ldots, x_1)\}_{x_{k}\in {\cal X}, (x_{1}, \ldots, x_{k-1})\in {\cal X}^{k-1}}$. 
That is, the $k$-th distribution depends on 
the previous $k-1$ outputs $(x_{k-1}, \ldots, x_1)$.
Then, we denote the support of $W$ by
${\cal X}^{k}_W:=
\{(x_{k},x_{k-1}, \ldots, x_1) \in {\cal X}^{k} |
W(x_{k}|x_{k-1}, \ldots, x_1) >0\}$.
In this definition, a $k-1$-memory transition matrix can be regarded as a $k$-memory transition matrix.


The $k-1$-memory transition matrix $W$ can be naturally regarded as
a transition matrix $W_{|{\cal X}^{k-1}}$ on ${\cal X}^{k-1}$ as
$$W_{|{\cal X}^{k-1}}(x_{k-1}, \ldots, x_1|x_{k-1}', \ldots, x_1'):=
W(x_{k-1}|x_{k-1}', \ldots, x_1')\delta_{x_{k-2},x_{k-1}'}\cdots \delta_{x_{1},x_{2}'}.$$
We say that a $k-1$-memory transition matrix $W$ is irreducible (ergodic)
when the transition matrix $W_{|{\cal X}^{k-1}}$ is an irreducible (ergodic) matrix.
Then, we also define the divergence of two
$k-1$-memory transition matrices $W$ and $W'$
as $D(W\|W'):=D(W_{|{\cal X}^{k-1}}\|W_{|{\cal X}^{k-1}}')$.
Thus, using the stationary distribution 
$P_{W}$ on ${\cal X}^{k-1}$, we have
\begin{align}
D(W\|W')= 
\sum_{x_{k-1}, \ldots, x_1}P_{W}(x_{k-1}, \ldots, x_1)
D(W_{x_{k-1}, \ldots, x_1}
\|W_{x_{k-1}, \ldots, x_1}').
\end{align}

Now, we introduce the stationary condition for a distribution $P$ on ${\cal X}^k$ as
\begin{align}
\sum_{x' \in {\cal X}} P(x', x_{k-1},\ldots, x_1)
=\sum_{x' \in {\cal X}} P(x_{k-1},\ldots, x_1,x').\Label{sym}
\end{align}
Also, we introduce 
the joint distribution $P_W^k$
and
the set of transition matrices ${\cal W}_{\cX^{k-1},W}$
as
\begin{align*}
&P_W^k(x_k, x_{k-1},\ldots, x_1) := W(x_k| x_{k-1},\ldots, x_1) P_W( x_{k-1},\ldots, x_1) \\
&{\cal W}_{\cX^{k-1},W} := \{V| V \hbox{ is a $k-1$-memory transition matrix and }
{\cal X}^k_{W}={\cal X}^k_V \}.
\end{align*}
Since $ ({\cal X}^{k-1})^2_{W_{|{\cal X}^{k-1}}}$ has one-to-one relation with
${\cal X}^{k}_W$ as
\begin{align}
&({\cal X}^{k-1})^2_{W_{|{\cal X}^{k-1}}}
=
\left\{
(x_{k-1}, \ldots, x_1,x_{k-1}', \ldots, x_1') \in {\cal X}^{2k-2}
\left|
\begin{array}{l}
(x_{k-1}, x_{k-1}', \ldots, x_1') \in {\cal X}^{k}_W\\
x_{k-2}=x_{k-1}', \ldots, x_1=x_2'
\end{array}
\right.
\right\},\Label{SGT}
\end{align}
applying Lemma \ref{STR3W} to the transition matrix $W_{|{\cal X}^{k-1}}$ on ${\cal X}^{k-1}$, 
we find the following lemma.

\begin{lemma}\Label{STR3}
The set of distributions $P_W^k$ equals that the set of 
joint distributions on ${\cal X}^k$ satisfying the stationary condition \eqref{sym}.
More precisely, 
\begin{align*}
\{ P_{W'}^k | W' \in {\cal W}_{\cX^{k-1},W}\} 
=\left\{ P\left| 
\begin{array}{l}
P \hbox{ is a distribution with support } {\cal X}^k_W \\
\hbox{to satisfy the stationary condition \eqref{sym}.}
\end{array}
\right.\right\}
\end{align*}
\end{lemma}

\if0
\begin{proof}
(1) The distribution $P_W^k$ satisfies the stationary condition \eqref{sym}.
(2) A distribution $P$ on ${\cal X}^k$ satisfies the stationary condition \eqref{sym}
if and only if there exists a $k-1$-memory transition matrix $W$ such that
$P_W^k=P$.
\end{proof}
\fi

The next lemma shows that an ergodic $k-1$-memory transition matrix 
can be realized in a natural setting.

\begin{lemma}\Label{STR3-2}
Assume that a $k-1$-memory transition matrix $W$ is ergodic.
\mh{If} $W$ is regarded as a $k$-memory transition matrix,
then any initial distribution on ${\cal X}^k$
converges to the joint distribution $P_W^k$. 
Hence, $W$ is ergodic as a $k$-memory transition matrix.
\end{lemma}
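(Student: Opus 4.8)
The plan is to prove the convergence statement directly and then read off ergodicity from the cited equivalence between convergence of the output distribution and ergodicity. Write $W_{|{\cal X}^k}$ for the transition matrix on ${\cal X}^k$ obtained from $W$ regarded as a $k$-memory matrix (so each step appends one symbol to the $k$-window and forgets the oldest coordinate). First I would set up the underlying symbol process: fix an arbitrary initial distribution $\mu$ on ${\cal X}^k$, let the initial $k$-window be $(Z_0,Z_{-1},\ldots,Z_{-k+1})$ distributed according to $\mu$ (newest coordinate first), and generate $Z_1,Z_2,\ldots$ by the $(k-1)$-memory rule $W$. Running $W_{|{\cal X}^k}$ for $n$ steps produces the $k$-window $(Z_n,\ldots,Z_{n-k+1})$, whereas running $W_{|{\cal X}^{k-1}}$ for $n-1$ steps from the projected initial state produces the $(k-1)$-window $S_{n-1}:=(Z_{n-1},\ldots,Z_{n-k+1})$.

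The key observation is the factorization of the $k$-window into the freshly generated symbol and the preceding $(k-1)$-window,
\begin{align*}
(Z_n,\ldots,Z_{n-k+1})=(Z_n,\,S_{n-1}),\qquad Z_n\sim W(\cdot\,|\,S_{n-1}),
\end{align*}
which holds for every $n\ge 1$, independently of $\mu$, because the new symbol depends only on the newest $k-1$ symbols. This yields, for every $n\ge 1$,
\begin{align*}
\Pr[(Z_n,\ldots,Z_{n-k+1})=(x_k,\ldots,x_1)]=W(x_k|x_{k-1},\ldots,x_1)\,\Pr[S_{n-1}=(x_{k-1},\ldots,x_1)],
\end{align*}
so the whole problem reduces to the convergence of the distribution of $S_{n-1}$, i.e.\ of the output distribution of the $(k-1)$-window chain $W_{|{\cal X}^{k-1}}$. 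Since $W$ ergodic as a $(k-1)$-memory matrix means precisely that $W_{|{\cal X}^{k-1}}$ is ergodic, the stated equivalence gives $\Pr[S_{n-1}=(x_{k-1},\ldots,x_1)]\to P_W(x_{k-1},\ldots,x_1)$ from the (possibly irregular) projected initial state, whence
\begin{align*}
\Pr[(Z_n,\ldots,Z_{n-k+1})=(x_k,\ldots,x_1)]\longrightarrow W(x_k|x_{k-1},\ldots,x_1)\,P_W(x_{k-1},\ldots,x_1)=P_W^k(x_k,\ldots,x_1),
\end{align*}
which is exactly the asserted convergence to $P_W^k$.

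To finish, I would apply the equivalence in the reverse direction: since the output distribution of $W_{|{\cal X}^k}$ converges to $P_W^k$ for \emph{every} initial distribution on ${\cal X}^k$, the matrix $W_{|{\cal X}^k}$ is ergodic, i.e.\ $W$ is ergodic as a $k$-memory transition matrix; as a byproduct $P_W^k$ is identified as its unique stationary distribution, consistent with the defining formula $P_W^k(x_k,\ldots,x_1)=W(x_k|x_{k-1},\ldots,x_1)P_W(x_{k-1},\ldots,x_1)$.

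The steps are individually short, so the care lies in the subtle points rather than in any single computation. The main thing to get right is that the factorization and the rule $Z_n\sim W(\cdot\,|\,S_{n-1})$ are valid for every $n\ge 1$ and for an \emph{arbitrary} initial window $\mu$, so that no support hypothesis on $\mu$ is needed; here it is essential that $S_{n-1}$ retains the newest $k-1$ symbols (dropping the oldest coordinate of the initial window), since that is precisely the coordinate on which the next symbol does not depend. The second delicate point is the reverse implication ``convergence for all initial distributions $\Rightarrow$ ergodicity'': one should apply the cited characterization on the recurrent class ${\cal X}^k_W$, noting that windows outside ${\cal X}^k_W$ are transient and enter ${\cal X}^k_W$ after a single step, hence do not affect the limit.
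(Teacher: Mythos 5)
Your proof is correct and follows essentially the same route as the paper's: the paper likewise observes that the marginal distribution of the newest $k-1$ symbols converges to $P_W$ by ergodicity of $W_{|{\cal X}^{k-1}}$, and that one further application of the transition rule then yields $P_W^k$. Your write-up is more explicit about the factorization being valid for arbitrary initial windows and about the reverse implication (convergence for all initial distributions implies ergodicity, handled on the recurrent class), both of which the paper leaves implicit.
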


This lemma guarantees that any ergodic transition matrix can be regarded as 
an ergodic $k-1$-memory transition matrix $W$, inductively.

\begin{proof}
Whatever initial distribution is,
the distribution for $(x_k, \ldots, x_2)$ converges to the stationary distribution $P_W$.
When the marginal distribution for $(x_k, \ldots, x_2)$ of the initial distribution on ${\cal X}^{k}$ is
$P_W $,
the output distribution is $P_W^k$. 
Hence, we obtain the desired statement.
\end{proof}

\begin{remark}\Label{RT5}
Here, we should remark that 
an irreducible $k-1$-memory transition matrix $W$ is not necessarily 
irreducible as a $k$-memory transition matrix.
For simplicity, we consider the case with $k=2$.
Assume that ${\cal X}$ is $\mathbb{Z}_d$ and $W(x|x')=\delta_{x,x'+1}$, i.e.,
$W$ has a cyclic form.
Then, $W$ is irreducible, but is not ergodic.
When $W$ is regarded as a $2$-memory transition matrix, we have
$W_{{\cal X}^2}(x_2,x_1|x_2',x_1')= \delta_{x_2,x_2'+1}\delta_{x_1,x_2'}$.
Then, the subset $\{(2,1),(3,2), \ldots,(d,d-1),(1,d) \}$ is closed in the application of $W_{{\cal X}^2}$.
Hence, $W_{{\cal X}^2}$ is not irreducible.
\end{remark}

\subsection{Exponential family of $k-1$-memory transition matrices}\Label{s4-5-2}
Now, we define an exponential family of $k-1$-memory transition matrices
as a natural extension of an exponential family of transition matrices as follows.
Firstly, we fix an irreducible $k-1$-memory transition matrix $W$ on ${\cal X}$.
Then, we denote the linear space of real-valued functions 
$\{g(x_{k},x_{k-1}, \ldots, x_1)\}$ defined on ${\cal X}^{k}_W$
by ${\cal G}({\cal X}^{k}_W)$.
Additionally, ${\cal N}({\cal X}^{k}_W) $ expresses the subspace of functions with form
$ f(x_{k},x_{k-1}, \ldots,x_2)-f(x_{k-1},x_{k-2}, \ldots,x_1)+c$ for
$(x_{k-1}, x_{k-2}, \ldots, x_1 ) \in {\cal X}^k_{W}$.
When functions $g_1, \ldots, g_l \in {\cal G}({\cal X}^{k}_W)$
are linearly independent as elements of ${\cal G}({\cal X}^{k}_W)/{\cal N}({\cal X}^{k}_W)$,
for $\vec{\theta}:=(\theta^1, \ldots, \theta^l) \in \bR^l$,
we define the matrix 
\begin{align}
&\overline{W}_{\vec{\theta}|{\cal X}^{k-1}}(x_{k-1}, \ldots, x_1|x_{k-1}', \ldots, x_1')\nonumber \\
:=&
e^{\sum_{j=1}^l \theta^j g_j(x_{k-1},x_{k-1}', \ldots, x_1')}
W(x_{k-1}|x_{k-1}', \ldots, x_1')\delta_{x_{k-2},x_{k-1}'}\cdots \delta_{x_{1},x_{2}'},
\end{align}
and denote the Perron-Frobenius eigenvalue by $\lambda_{\vec{\theta}}$.
Also, we denote the Perron-Frobenius eigenvector of the transpose 
$\overline{W}_{\vec{\theta}|{\cal X}^{k-1}}^T$
by $\overline{P}^3_{\vec{\theta}}$.
Then, we define the $k-1$-memory transition matrix 
$W_{\vec{\theta}}$ as
\begin{align}
W_{\vec{\theta}}(x_{k}|x_{k-1}, \ldots, x_1) 
:=&
\lambda_{\vec{\theta}}^{-1}
\overline{P}^3_{\vec{\theta}}(x_{k},x_{k-1}, \ldots, x_2)
e^{\sum_{j=1}^l \theta^j g_j(x_{k},x_{k-1}, \ldots, x_1)}
W(x_{k}|x_{k-1}, \ldots, x_1)
\nonumber \\
& 
\overline{P}^3_{\vec{\theta}}(x_{k-1},x_{k-2}, \ldots, x_1)^{-1}.
\end{align}
That is, we call $\{W_{\vec{\theta}}\}_{\theta \in \Theta}$
the exponential family 
of $k-1$-memory transition matrices 
generated by 
the generators $g_1, \ldots, g_l $ at $W$.
The expectation parameter $\eta_j(\vec{\theta})$ with $j=1, \ldots, l$ is given as
\begin{align}
\eta_j(\vec{\theta})
:=\sum_{\vec{x}\in {\cal X}^k}
g_j(x_k,x_{k-1}, \ldots, x_1)
P^k_{W_{\vec{\theta}}}(x_k,x_{k-1}, \ldots, x_1 ).
\end{align}
Then, combining Lemma \ref{STR2W} and \eqref{SGT}, we have the following lemma.

\begin{lemma}\Label{STR2}
We define the linear map $F^k$ from the set of first derivatives of 
$P^k_{W_{\vec{\theta}}}$ to $\mathbb{R}^{l}$ as
\begin{align}
& F^k\Bigl( \sum_{i=1}^l a_i \frac{\partial }{\partial \theta^i}
P^k_{W_{\vec{\theta}}} \Bigl) 
:=
\Big(\sum_{\vec{x}\in {\cal X}^k}
g_j(x_k,x_{k-1}, \ldots, x_1)
\sum_{i=1}^l a_i\frac{\partial}{\partial \theta^i}
P^k_{W_{\vec{\theta}}}(x_k,x_{k-1}, \ldots, x_1 ) \Big)_{j=1, \ldots,l}.
\end{align}
Then, the linear map $F^k$ is invertible.
\end{lemma}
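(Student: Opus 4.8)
The plan is to reduce the $k$-memory statement to the ordinary two-input case already settled in Lemma \ref{STR2W}, using the identification \eqref{SGT}. First I would regard the $k-1$-memory transition matrix $W$ as the ordinary transition matrix $W_{|{\cal X}^{k-1}}$ on the enlarged state space ${\cal X}^{k-1}$, which is irreducible by the standing assumption (recall that irreducibility of a $k-1$-memory matrix was \emph{defined} as irreducibility of $W_{|{\cal X}^{k-1}}$). The bijection \eqref{SGT} between ${\cal X}^{k}_W$ and $({\cal X}^{k-1})^2_{W_{|{\cal X}^{k-1}}}$ then carries each generator $g_j\in {\cal G}({\cal X}^{k}_W)$ to a two-input function $\tilde g_j$ on $({\cal X}^{k-1})^2_{W_{|{\cal X}^{k-1}}}$, obtained by reading the $k$-tuple $(x_{k},\ldots,x_1)$ as the pair whose target state is the window $(x_{k},\ldots,x_2)$ and whose source state is the window $(x_{k-1},\ldots,x_1)$.

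Next I would check that this identification is compatible with all the relevant structure. The subspace ${\cal N}({\cal X}^{k}_W)$, consisting of functions $f(x_{k},\ldots,x_2)-f(x_{k-1},\ldots,x_1)+c$, is carried \emph{exactly} onto ${\cal N}(({\cal X}^{k-1})^2_{W_{|{\cal X}^{k-1}}})$, since under the window reading above $f$ becomes a one-input function $h$ on ${\cal X}^{k-1}$ and the expression becomes $h(s)-h(s')+c$ with $s,s'$ the target and source states. Hence linear independence of $\{g_j\}$ in ${\cal G}({\cal X}^{k}_W)/{\cal N}({\cal X}^{k}_W)$ is equivalent to linear independence of $\{\tilde g_j\}$ in ${\cal G}(({\cal X}^{k-1})^2_{W_{|{\cal X}^{k-1}}})/{\cal N}(({\cal X}^{k-1})^2_{W_{|{\cal X}^{k-1}}})$. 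Moreover, the defining formulas for $\overline{W}_{\vec{\theta}|{\cal X}^{k-1}}$ and $W_{\vec{\theta}}$ show that the exponential family $\{W_{\vec{\theta}}\}$ of $k-1$-memory transition matrices generated at $W$ coincides, under \eqref{SGT}, with the ordinary exponential family $\{(W_{\vec{\theta}})_{|{\cal X}^{k-1}}\}$ generated at $W_{|{\cal X}^{k-1}}$ by $\{\tilde g_j\}$, because the Perron--Frobenius data $\lambda_{\vec{\theta}}$ and $\overline{P}^3_{\vec{\theta}}$ are literally the same in both descriptions. Consequently $P^{k}_{W_{\vec{\theta}}}$ is identified with $P^{2}_{(W_{\vec{\theta}})_{|{\cal X}^{k-1}}}$, and the expectation parameters $\eta_j(\vec{\theta})$ agree term by term.

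With these identifications in hand, the map $F^k$ becomes literally the map $F^2$ of Lemma \ref{STR2W} attached to $W_{|{\cal X}^{k-1}}$ and $\{\tilde g_j\}$: the tangent vector $\sum_i a_i \frac{\partial}{\partial \theta_i}P^{k}_{W_{\vec{\theta}}}$ corresponds to $\sum_i a_i \frac{\partial}{\partial \theta_i}P^{2}_{(W_{\vec{\theta}})_{|{\cal X}^{k-1}}}$, and the two defining sums coincide under \eqref{SGT}. Since the identification of tangent spaces is a linear isomorphism and $F^2$ is invertible by Lemma \ref{STR2W} (equivalently, because the Hessian $\mathsf{H}_{\vec{\theta}}[\phi]$ is positive definite by Proposition \ref{L1-14-2}), it follows that $F^k$ is invertible. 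The only step demanding care is the bookkeeping of these two correspondences, namely that ${\cal N}({\cal X}^{k}_W)$ matches ${\cal N}(({\cal X}^{k-1})^2_{W_{|{\cal X}^{k-1}}})$ and that the two exponential families are identified, so that Lemma \ref{STR2W} may be invoked verbatim; once this is verified, invertibility is immediate and requires no fresh computation.
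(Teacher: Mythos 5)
Your proposal is correct and follows exactly the route the paper intends: the paper derives Lemma \ref{STR2} by ``combining Lemma \ref{STR2W} and \eqref{SGT},'' i.e.\ precisely the reduction to the two-input case on the enlarged state space ${\cal X}^{k-1}$ that you carry out. You have merely made explicit the bookkeeping (the matching of ${\cal N}({\cal X}^{k}_W)$ with ${\cal N}(({\cal X}^{k-1})^2_{W_{|{\cal X}^{k-1}}})$ and the identification of the two exponential families) that the paper leaves implicit.
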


Also, the set $\{W_{\vec{\theta}|{\cal X}^{k-1}}\}_{\vec{\theta}}$
forms an exponential family of transition matrices on ${\cal X}^{k-1} $
because 
\begin{align}
&W_{\vec{\theta}|{\cal X}^{k-1}}(x_{k-1}, \ldots, x_1|x_{k-1}', \ldots, x_1')
\nonumber \\
=&
\lambda_{\vec{\theta}}^{-1}
\overline{P}^3_{\vec{\theta}}(x_{k-1}, \ldots, x_1)
\overline{W}_{\vec{\theta}|{\cal X}^{k-1}}
(x_{k-1}, \ldots, x_1|x_{k-1}', \ldots, x_1') 
\overline{P}^3_{\vec{\theta}}(x_{k-1}', \ldots, x_1')^{-1}.
\nonumber
\end{align}
In this sense, we call $\phi(\vec{\theta}):=\log \lambda_{\vec{\theta}}$
the potential function. 
 Then, similar to \eqref{1-1}, we have 
$
D({W}_{\vec{\theta}} \| {W}_{\vec{\theta}'})= 
\sum_{j=1}^d
(\theta^j-{\theta'}^j)\frac{\partial \phi}{\partial \theta^j}(\vec{\theta})- \phi(\vec{\theta})+ \phi(\vec{\theta}') \Label{1-1B}.
$

For linearly independent functions, 
the combination of Lemma \ref{STRW} and \eqref{SGT} yields the following lemma.

\begin{lemma}\Label{STR}
The following conditions for linearly independent function $g_1, \ldots, g_l$
in 
${\cal G}({\cal X}^k_W)/{\cal N}({\cal X}^k_W)$
are equivalent.
\begin{description}
\item[(1)]
The set of two-input functions $\{g_j\}$ form a basis of the quotient space 
${\cal G}({\cal X}^k_W)/{\cal N}({\cal X}^k_W)$.
\item[(2)]
The set $\{W_{\vec{\theta}}\}_{\vec{\theta} \in \bR^l}$
equals the set of transition matrices with the support 
${\cal X}^{k}_W$, i.e.,
 $\{W_{\vec{\theta}}\}_{\vec{\theta} \in \bR^l}
=
{\cal W}_{\cX^{k-1},W}.$\Label{SFH2W}
\item[(3)]
We have $
\{ P_{W_{\vec{\theta}}}^k \}_{\vec{\theta} \in \bR^l} 
=
\left\{ P\left| 
\begin{array}{l}
P \hbox{ is a distribution with support } {\cal X}^k_W \\
\hbox{to satisfy the stationary condition \eqref{sym}.}
\end{array}
\right.\right\}$.
\end{description}
\end{lemma}

This lemma shows that 
${\cal W}_{\cX^{k-1},W}$ is an exponential family.
When $W$ has no zero entry,
the dimension of ${\cal N}({\cal X}^{k}_W) $ is $|{\cal X}|^{k-1}$, hence,
the dimension of ${\cal G}({\cal X}^{k}_W)/{\cal N}({\cal X}^{k}_W) $ is $|{\cal X}|^{k}-|{\cal X}|^{k-1}$.

\subsection{Partial observation model of $k-1$-memory transition matrices}\Label{s5-2}
In this subsection, we rewrite the results in Section \ref{s4} 
for $k$-memory transition matrices,
which will work as preparation of our application of the partial observation model to the hidden Markovian model.
Let $W$ be an irreducible $k$-memory transition matrix on ${\cal X}$,
and
$g_1, \ldots, g_{l_1+l_2+l_3}$ be linearly independent functions as elements of
${\cal G}({\cal X}^{k}_{W})/{\cal N}({\cal X}^{k}_{W})$.
Let ${\cal W}:=\{W_{\vec{\theta}_1,\vec{\theta}_{2,3}}\}$ be
the exponential family of $k-1$-memory transition matrices on ${\cal X}$
by $W$ with the generator $\{g_1, \ldots, g_{l_1+l_2+l_3}\}$.
So, we can define the potential function $\phi$.
Now, we consider the exponential subfamily 
${\cal W}_2:=\{W_{0,\vec{\theta}_{2,3}}\}_{\Theta_{2,3} \in \Theta_{2,3}}$,
where the parametric space $\Theta_{2,3}\subset {\cal V}_{2,3}$ will be discussed in a careful way.

Now, we assume that we can observe only the sample mean 
of the a part of generators $g_1, \ldots, g_{l_1+l_2}$ with $n+k$ observations
$X^{n+k}= (X_1, \ldots, X_{n+k})$.
That is, we can observe the sample mean $\vec{Y}_{1,2}^n:=
(\frac{{g}_1(X^{n+k})}{n}, \ldots,\frac{{g}_{l_1+l_2}(X^{n+k})}{n})$,
where 
${g}_j(X^{n+k}):=\sum_{i=1}^n {g}_j(X_{i-1+k},\ldots, X_{i})$.
So, the expectation of the sample mean $\vec{Y}_{1,2}^n$ 
converges to $\vec{\eta}_{1,2}(0,\vec{\theta}_{2,3})$ as $n$ goes to infinity
with whatever initial distribution
when the true transition matrix is $W_{0,\vec{\theta}_{2,3}}$ and is irreducible \cite[Section 8.2]{HW-est}.
Now, we propose an estimator 
$\hat{\theta}_{2,3}(\vec{Y}_{1,2}^n) $
for $\vec{\theta}_{2,3}$
from the observed sample mean $\vec{Y}_{1,2}^n$ as follows.
\begin{align}
\hat{\theta}_{2,3}(\vec{Y}_{1,2}^n) :=
\argmin_{\vec{\theta}_{2,3}' \in \Theta_{2,3}}
\min_{\vec{\eta}_3' \in {\cal V}_3}
D\big(W_{\vec{\theta}(\vec{Y}_{1,2}^n,\vec{\eta}_3')}\big\|W_{0,\vec{\theta}_{2,3}'}\big).
\Label{eq6-9-1B}
\end{align}
Here, $\argmin_{\vec{\theta}_{2,3}' \in \Theta_{2,3}}$
is not unique in general.
In this case, we choose one of elements $\vec{\theta}_{2,3}' \in \Theta_{2,3}$ attaining the minimum.

In this case, still we need to care about the conditions C1, C2, and C3.
At least, we need to verify the Jacobi matrix condition given in Lemma \ref{6-13-thL}.
In the next two sections, we will investigate the equivalence problem when we apply the partial observation model to the $k-1$-memory joint distribution.

\section{Hidden Markov model}\Label{s8}
\subsection{Equivalence problem}\Label{s81}
To discuss estimation of the transition matrix of the hidden Markovian process,
we need to understand the relation between 
the stochastic behavior of the observed data and the transition matrix for hidden data.
Indeed, there is a possibility that
two different transition  matrices for hidden and observed variables
yield the same stochastic behavior for the observed variables.
Since such two transition matrices cannot be distinguished, we need to identify them
and consider that they are equivalent, in practice.
As a preparation to apply the result of Section \ref{s4} to hidden Markov process,
we need a parametrization by taking account into this equivalence relation for transition matrices.
That is, a proper parametrization without duplication is essential for 
the estimation of hidden Markov process.
For this aim, in this section, we review the result of the paper \cite{LEH}.

Given a transition matrix $W(x|x')$ from the hidden system ${\cal X}$ to itself
and
a transition matrix $V(y|x')$ from the hidden system ${\cal X}$ to the observed system ${\cal Y}$,
we say that the pair $(W,V)$
is a pair of transition matrices $(W,V)$ on ${\cal X}$ with ${\cal Y}$.
Since the pair $(W,V)$ gives a transition matrix  
$P^{2|1}_{XY}[(W,V)](x,y|x'y'):= W(x|x')V(y|x')$ on ${\cal X} \times {\cal Y}$,
it yields a hidden Markovian process with an observed variable $Y$ and 
a hidden variable $X$.
When the transition matrix $W$ is irreducible,
we call the pair $(W,V)$ of transition matrices irreducible.
When the transition matrix $W$ is ergodic,
and the image of the matrix $V$ is the vector space ${\cal V}_{{\cal Y}}$, 
the transition matrix $P^{2|1}_{XY}[(W,V)]$ is irreducible.
Hence, we call the pair $(W,V)$ of transition matrices irreducible in this case.

In the following, for simplicity, we identify ${\cal X}$ and ${\cal Y}$
with $\{1,\ldots, d\}$ and $\{1,\ldots, d_Y\}$, respectively.
That is, $|{\cal X}|=d$ and $|{\cal Y}|=d_Y$.
In this section, we assume the above irreducibility.
When the initial distribution is $P$ and the transition matrices are $W$ and $V$,
the probability that the sequence 
$(y_{k},y_{k-1}, \ldots , y_{1}) \in {\cal Y}^{k}$
is given as
\begin{align}
\sum_{ (x_0,x_1, \ldots , x_{k-1}) \in  \mathcal{X}^{k}}
V(y_{k}|x_{k-1}) W(x_{k-1}|x_{k-2}) V(y_{k-1}|x_{k-2})\cdots W(x_1|x_{0}) V(y_{1}|x_{0})P(x_0).
\Label{HFD}
\end{align}

For the following discussion, we employ the vector space 
${\cal V}_{{\cal X}}:= \{ v=(v_x)_{x \in {\cal X}}| v_x \in \bR\}$, 
i.e., the space ${\cal V}_{{\cal X}}$ is spanned by basis $\{e_x\}_{x \in {\cal X}}$.
We define the vector $V_{y} \in {\cal V}_{{\cal X}}$ as $V_y(x'):=V(y|x')$
and the matrix $D(v)$ on ${\cal V}_{{\cal X}}$ as the diagonal matrix whose diagonal entries are given by a vector $v \in {\cal V}_{{\cal X}}$.
Also, we define the vector ${V}_{*,x'} \in {\cal V}_{{\cal Y}}$ as ${V}_{*,x'}(y):= V(y|x')$ for 
a transition matrix $V(y|x')$ from ${\cal X}$ to ${\cal Y}$.
By using these notations and products of matrices on ${\cal V}_{{\cal X}}$,
the transition matrix  $P^k[(W,V)] $ is defined as
\begin{align*}
P^k[(W,V)] (y_k, \ldots, y_1|x'):=&
\sum_{x\in {\cal X}} (W D(V_{y_k}) W \cdots W D(V_{y_1})) (x|x') \\
=&
\sum_{x\in {\cal X}} (D(V_{y_k}) W \cdots W D(V_{y_1}))(x|x').
\end{align*}
Hence, the joint distribution \eqref{HFD} on ${\cal Y}^k$ is given as
\begin{align*}
P^{k}[(W,V)]\cdot P(y_k, \ldots, y_1)=
\sum_{x' \in {\cal X}} (P^k[(W,V)] (y_k, \ldots, y_1|x'))P(x'),
\end{align*}
where $\cdot$ express the matrix product on ${\cal V}_{{\cal X}}$.

The integer $k_{(W,V)}$ is defined to be the minimum integer $k_0$ to satisfy the condition
$\Ker P^{k_0}[(W,V)]=\cap_{k} \Ker P^{k}[(W,V)]$. 
For a distribution $P$ on ${\cal X}$, the subspace ${\cal V}^k(P)$ 
is defined as the subspace of ${\cal V}_{{\cal X}}/ \Ker P^{k_{(W,V)}}[(W,V)]$
spanned by $\{ [ W D(V_{y_k}) W \cdots W D(V_{y_1})P] | y_j \in \cY, j\le k\}$,
where $[v]$ expresses the element of 
${\cal V}_{{\cal X}}/ \Ker P^{k_{(W,V)}}[(W,V)]$ whose 
representative is $v \in {\cal V}_{{\cal X}}$.
Then, the integer $ k_{(P,(W,V))}$ is defined to be the minimum integer $k_1$ 
to satisfy the condition
$\cup_{k =1}^{\infty} {\cal V}^k(P) = {\cal V}^{k_1}(P)$.
Then, the following proposition gives the meaning of 
the integers $k_{(W,V)}$ and $ k_{(P,(W,V))}$.
\begin{proposition}[\protect{\cite[]{LEH}}]
The following conditions are equivalent for $P,W,V$ and $P',W',V'$.
\begin{description}
\item[(1)]
The relations 
$k_{(W,V)}=k_{(W',V')}$,
$k_{(P,(W,V))}=k_{(P',(W',V'))}$,
and
$P^{k}[(W,V)]\cdot P
=
P^{k}[(W',V')]\cdot P'$ 
hold with $k=k_{(W,V)}+k_{(P,(W,V))}+1$.

\item[(2)]
The relation
$P^{k}[(W,V)]\cdot P
=
P^{k}[(W',V')]\cdot P'$ 
holds with any integer $k$.

\item[(3)]
The relation 
$P^{k}[(W,V)]\cdot P
=
P^{k}[(W',V')]\cdot P'$ 
holds for $k=\max (k_{(W,V)},k_{(W',V')})
+\max(k_{(P,(W,V))},k_{(P',(W',V'))})+1$.

\end{description}
\end{proposition}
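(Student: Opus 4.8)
The plan is to prove the cyclic chain of implications $(2) \Rightarrow (3) \Rightarrow (1) \Rightarrow (2)$, which is the standard economical route for an equivalence of three conditions. The implications $(2) \Rightarrow (3)$ and $(2) \Rightarrow (1)$ are immediate, since both $(1)$ and $(3)$ merely assert the equality $P^{k}[(W,V)]\cdot P = P^{k}[(W',V')]\cdot P'$ for one specific value of $k$ (together, in the case of $(1)$, with the stated equalities of the integers $k_{(W,V)}$ and $k_{(P,(W,V))}$, which I address separately below). So the whole content lies in the reverse directions: showing that equality of the $k$-joint distributions at a single well-chosen finite horizon forces equality at \emph{every} horizon. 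I would therefore organize the proof around the single core lemma that $(3) \Rightarrow (2)$, and then note that $(1) \Rightarrow (2)$ follows by the same argument with the horizon $k_{(W,V)}+k_{(P,(W,V))}+1$ in place of the symmetric maximum.

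\emph{First} I would reformulate the statement in the linear-algebraic language already set up in the excerpt. The object $[WD(V_{y_k})W\cdots WD(V_{y_1})P]$, viewed in the quotient $\mathcal{V}_{\mathcal{X}}/\Ker P^{k_{(W,V)}}[(W,V)]$, is exactly what generates the increasing family of subspaces $\mathcal{V}^k(P)$. The key structural facts I would isolate are: (i) the definition of $k_{(W,V)}$ makes $\Ker P^{k}[(W,V)]$ stabilize, so that once $k\ge k_{(W,V)}$ the quotient space on which the relevant vectors live no longer changes; and (ii) the definition of $k_{(P,(W,V))}$ makes the chain $\mathcal{V}^1(P)\subset \mathcal{V}^2(P)\subset\cdots$ stabilize at $k_{(P,(W,V))}$. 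The point of the threshold $k=\max(k_{(W,V)},k_{(W',V')})+\max(k_{(P,(W,V))},k_{(P',(W',V'))})+1$ in condition $(3)$ is precisely that it exceeds both saturation levels simultaneously for \emph{both} pairs, so matching the distributions up to this horizon pins down the two stabilized subspaces and the action of the generating operators $WD(V_y)W$ on them.

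\emph{The inductive heart} of $(3)\Rightarrow(2)$ is then to show that if the joint distributions agree up to the saturation horizon, they agree for all larger $k$ by induction on $k$. The mechanism is that the map $y\mapsto P^{k+1}[(W,V)]\cdot P(\cdot,y_k,\ldots,y_1)$ is obtained from $P^{k}[(W,V)]\cdot P$ by applying one further step of the reduced dynamics, and once $k$ is past the saturation level this one-step extension is determined by data already fixed below the horizon, because the vector $[WD(V_{y_k})W\cdots WD(V_{y_1})P]$ already lies in the stabilized subspace $\mathcal{V}^{k_{(P,(W,V))}}(P)$ and the linear functional reading off the next-symbol marginal is common to both systems. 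I would make this precise by expressing $P^{k+1}$ in terms of the action of the operators on representatives in the quotient and invoking that, past saturation, equality of representatives is equivalent to equality of the full distributions.

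\emph{The main obstacle} I anticipate is the bookkeeping in the quotient space: the vectors $WD(V_{y_k})W\cdots WD(V_{y_1})P$ naturally live in $\mathcal{V}_{\mathcal{X}}$ for one system and in $\mathcal{V}_{\mathcal{X}'}$ for the other, so one cannot directly compare them, and the equivalence must be transported entirely through the observable distributions $P^{k}\cdot P$, which are the only common objects. Getting the induction to close requires verifying that the ``next-symbol'' linear functional factors through the stabilized quotient identically for both pairs --- this is where conditions $(1)$ and $(3)$ differ only in the \emph{shape} of the horizon and not in substance, and it is exactly why $(1)$ must additionally record the equalities $k_{(W,V)}=k_{(W',V')}$ and $k_{(P,(W,V))}=k_{(P',(W',V'))}$: without asserting that the two saturation levels coincide, the asymmetric horizon $k_{(W,V)}+k_{(P,(W,V))}+1$ would not be guaranteed to exceed the saturation level of the primed system. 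I would close by checking that, conversely, $(2)$ forces those two integer equalities (since the saturation levels are themselves determined by the common distributions once all horizons agree), completing $(2)\Rightarrow(1)$ and hence the full cycle.
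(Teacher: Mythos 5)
First, a point of comparison that matters for this exercise: the paper contains no proof of this proposition --- it is imported verbatim from the companion paper \cite{LEH}, where the quantities $k_{(W,V)}$ and $k_{(P,(W,V))}$ are developed, and none of the appendices touch it. So there is no in-text argument to measure your attempt against, and I can only judge the proposal on its own terms. On those terms, your identification of the core content is right: the substance is that agreement of the word distributions up to the saturation horizon propagates to all horizons, via the observable/reachable-subspace (Hankel-type) argument, and you correctly flag that the two realizations live on different state spaces so every comparison must be routed through the distributions $P^{k}[(W,V)]\cdot P$ themselves. But the proposal never executes that step: the inductive mechanism by which the linear relations among the vectors $WD(V_{y_{k'}})\cdots WD(V_{y_1})P$ modulo $\Ker P^{k_{(W,V)}}[(W,V)]$ are read off from the finite-horizon distributions, and then used to express every longer word's probability through shorter ones \emph{with the same coefficients for both realizations}, is precisely where all the work lies, and it is only described. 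Note also that you announce the cycle $(2)\Rightarrow(3)\Rightarrow(1)\Rightarrow(2)$ but actually argue $(3)\Rightarrow(2)$, $(1)\Rightarrow(2)$ and $(2)\Rightarrow(1)$ separately; that is logically acceptable, but it makes the last of these implications load-bearing.

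And that last implication is where the genuine gap sits. You assert that $(2)$ forces $k_{(W,V)}=k_{(W',V')}$ and $k_{(P,(W,V))}=k_{(P',(W',V'))}$ ``since the saturation levels are themselves determined by the common distributions.'' This is not justified and is not obvious. Both integers are defined through filtrations inside ${\cal V}_{{\cal X}}$ --- the decreasing chain $\Ker P^{1}[(W,V)]\supset\Ker P^{2}[(W,V)]\supset\cdots$ and the increasing chain ${\cal V}^{1}(P)\subset{\cal V}^{2}(P)\subset\cdots$ --- and these depend on the particular realization, not only on the output process: two equivalent triples may have hidden state spaces of different sizes and, a priori, kernel or reachability chains that stabilize after different numbers of steps. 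To close $(2)\Rightarrow(1)$ you must either prove that these stabilization indices are invariants of the equivalence class (for instance by relating each realization's filtrations to a canonical minimal realization built from the Hankel data of the common distributions) or cite the precise statement of \cite{LEH} that does so. Without that, your argument establishes $(1)\Rightarrow(2)\Leftrightarrow(3)$ but leaves the only direction in which the integer equalities must be \emph{derived}, rather than assumed, unproven.
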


When the condition of the above proposition holds,
we say that the triple $P,W,V$ is equivalent to the triple $P',W',V'$.
For a triple $P,W,V$, 
the triple $P',W',V'$ is called the minimum realization of the triple $P,W,V$
when the triple $P',W',V'$ has the minimum set ${\cal X}$ among equivalent triples \cite{IAK}.
The numbers $k_{(W,V)}$ and $k_{(P,(W,V))}$ are upper bounded as follows.

\begin{proposition}[\protect{\cite{IAK},\cite[]{LEH}}]\Label{KTG}
The inequalities $k_{(W,V)}\le d$ and $k_{(P,(W,V))}\le d - \dim \Ker P^{k_{(W,V)}}[(W,V)]$
hold.
\end{proposition}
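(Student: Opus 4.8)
The plan is to rewrite the family $\{P^k[(W,V)]\}_k$ in ``observable operator'' form and deduce both bounds from the stabilization of two nested chains of subspaces of the $d$-dimensional space ${\cal V}_{\cal X}$. First I would set $A_y := W D(V_y)$, an operator on ${\cal V}_{\cal X}$, so that (summing the leftmost $W$ against $u_{\cal X}^T W = u_{\cal X}^T$) one has
\[
P^k[(W,V)](y_k,\ldots,y_1|x') = u_{\cal X}^T A_{y_k}\cdots A_{y_1}\, e_{x'} ,
\]
and hence, writing ${\cal K}_k := \Ker P^k[(W,V)]$, a vector $v$ lies in ${\cal K}_k$ iff $u_{\cal X}^T A_{y_k}\cdots A_{y_1} v = 0$ for every word $(y_1,\ldots,y_k)$. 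Two elementary manipulations drive everything: fixing the innermost symbol gives the recursion ${\cal K}_{k+1} = \bigcap_{y\in{\cal Y}} A_y^{-1}({\cal K}_k)$, while summing over the outermost symbol, using $\sum_y A_y = W$ and $u_{\cal X}^T W = u_{\cal X}^T$, shows that each length-$k$ functional is a combination of length-$(k+1)$ functionals, so ${\cal K}_{k+1}\subseteq{\cal K}_k$. Thus $\{{\cal K}_k\}$ is a decreasing chain of subspaces of ${\cal V}_{\cal X}$.

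From the recursion I would next prove the ``stabilize once, stabilize forever'' property: if ${\cal K}_{k_0}={\cal K}_{k_0+1}$, then ${\cal K}_{k_0+2}=\bigcap_y A_y^{-1}({\cal K}_{k_0+1})=\bigcap_y A_y^{-1}({\cal K}_{k_0})={\cal K}_{k_0+1}$, and inductively the chain is constant past $k_0$. Consequently $\bigcap_k{\cal K}_k={\cal K}_{k_{(W,V)}}$, where $k_{(W,V)}$ is the first index of equality, and the inclusions ${\cal K}_1\supseteq\cdots\supseteq{\cal K}_{k_{(W,V)}}$ are all strict, whence $\dim{\cal K}_1>\cdots>\dim{\cal K}_{k_{(W,V)}}\ge 0$. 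Since $V$ is a transition matrix some $V_y\neq 0$, which forces ${\cal K}_1\neq{\cal V}_{\cal X}$ and $\dim{\cal K}_1\le d-1$; a strictly decreasing chain of nonnegative integers starting at most $d-1$ has at most $d$ terms, giving $k_{(W,V)}\le d$.

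For the second inequality, put ${\cal K}:={\cal K}_{k_{(W,V)}}=\bigcap_k{\cal K}_k$. Stability yields $A_y{\cal K}\subseteq{\cal K}$, so each $A_y$ descends to $\bar A_y$ on the quotient ${\cal V}_{\cal X}/{\cal K}$, a space of dimension $d-\dim{\cal K}$. In this language ${\cal V}^k(P)$ is exactly the reachability chain generated by $[P]$, namely ${\cal V}^{k+1}(P)={\cal V}^k(P)+\sum_y\bar A_y\,{\cal V}^k(P)$, and the identical permanence argument shows that it is constant once two consecutive terms agree; thus $k_{(P,(W,V))}$ is the first index of equality and the chain increases strictly before it. Because $P$ is a genuine probability distribution, $P^k[(W,V)]\cdot P$ is not identically zero, so $P\notin{\cal K}$ and $[P]\neq 0$; since $[P]\in{\cal V}^1(P)$ we get $\dim{\cal V}^1(P)\ge 1$, and a strictly increasing chain inside ${\cal V}_{\cal X}/{\cal K}$ starting at dimension at least $1$ reaches at most dimension $d-\dim{\cal K}$, giving $k_{(P,(W,V))}\le d-\dim{\cal K}$.

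The hard part will be the two structural inputs rather than the dimension bookkeeping: establishing the kernel recursion ${\cal K}_{k+1}=\bigcap_y A_y^{-1}({\cal K}_k)$ together with its ``stabilize once, stabilize forever'' consequence, and, for the second bound, the invariance $A_y{\cal K}\subseteq{\cal K}$ that makes the quotient ${\cal V}_{\cal X}/{\cal K}$ and the reachability recursion well defined. The remaining care is in the two nonvanishing facts, ${\cal K}_1\neq{\cal V}_{\cal X}$ and $[P]\neq 0$, that anchor the two counts.
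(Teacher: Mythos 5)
The paper does not prove Proposition \ref{KTG}; it is imported verbatim from \cite{IAK} and \cite{LEH}, so there is no internal proof to compare against. Your argument is correct and is the standard one behind those references: the kernel recursion ${\cal K}_{k+1}=\bigcap_y A_y^{-1}({\cal K}_k)$ with $A_y=WD(V_y)$, the monotonicity from $\sum_y A_y=W$ and $u_{\cal X}^TW=u_{\cal X}^T$, the ``stabilize once, stabilize forever'' property for both the decreasing kernel chain and the increasing reachability chain on ${\cal V}_{\cal X}/{\cal K}$, and the dimension count anchored by ${\cal K}_1\neq{\cal V}_{\cal X}$ and $[P]\neq 0$. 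The only point worth flagging is that the paper's definition of ${\cal V}^k(P)$ is ambiguous about whether the empty word (i.e.\ $[P]$ itself) is included among the spanning vectors; your bound survives either reading, since if only words of length $k'\geq 1$ are allowed one replaces ``$[P]\neq 0$'' by the observation that $\sum_y A_yP=WP\notin{\cal K}$ (because $P^k[(W,V)]\cdot(WP)$ is a probability distribution), which still gives $\dim{\cal V}^1(P)\geq 1$, and the recursion ${\cal V}^{k+1}(P)={\cal V}^k(P)+\sum_y\bar A_y{\cal V}^k(P)$ holds in both conventions.
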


In a special case, $k_{(W,V)}$ 
is characterized as follows.

\begin{proposition}[\protect{\cite[]{LEH}}]\Label{KHG}
Given a pair of transition matrices $(W,V)$ on ${\cal X}$ with ${\cal Y}$ and a distribution $P$ on ${\cal X}$, 
we assume that the vectors $\{{V}_{*,x'}\}_{x' \in {\cal X}} $ are linearly independent.
Then, $\Ker P^1[(W,V)]=\{0\}$ and $k_{(W,V)}=1 $.
\end{proposition}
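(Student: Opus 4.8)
The plan is to reduce the whole statement to an explicit computation of the single-step map $P^1[(W,V)]$ and to the identification of its kernel with the space of linear dependence relations among the vectors $\{{V}_{*,x'}\}_{x' \in {\cal X}}$. Concretely, I would first compute $P^1[(W,V)]$ directly from the definition of $P^k[(W,V)]$ with $k=1$:
\begin{align}
P^1[(W,V)](y_1|x')
= \sum_{x \in {\cal X}} \big(W D(V_{y_1})\big)(x|x')
= \sum_{x,x'' \in {\cal X}} W(x|x'') V(y_1|x') \delta_{x'',x'}.
\end{align}
Here the leading factor $W$ together with the summation over the output index $x$ amounts to applying the all-ones row vector $u_{{\cal X}}^T$; since $W$ is a transition matrix we have $u_{{\cal X}}^T W = u_{{\cal X}}^T$, i.e. $\sum_{x} W(x|x'')=1$ for every $x''$. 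Hence the leading $W$ collapses and
\begin{align}
P^1[(W,V)](y_1|x') = V(y_1|x'),
\end{align}
so that, as a linear map on ${\cal V}_{{\cal X}}$, $P^1[(W,V)]$ sends a vector $v=(v_{x'})_{x'}$ to the vector $\sum_{x'} v_{x'} {V}_{*,x'}$ indexed by $y_1$.

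Next I would read off the kernel. A vector $v \in {\cal V}_{{\cal X}}$ lies in $\Ker P^1[(W,V)]$ exactly when $\sum_{x'} v_{x'} {V}_{*,x'} = 0$, that is, exactly when $v$ encodes a linear dependence relation among the vectors $\{{V}_{*,x'}\}_{x' \in {\cal X}}$. Under the hypothesis that these vectors are linearly independent, the only such relation is the trivial one, whence $\Ker P^1[(W,V)] = \{0\}$. Finally, since $\cap_{k} \Ker P^{k}[(W,V)] \subseteq \Ker P^1[(W,V)] = \{0\}$, the intersection is also $\{0\}$ and coincides with $\Ker P^1[(W,V)]$; as $k_{(W,V)}$ is by definition the minimum integer $k_0$ with $\Ker P^{k_0}[(W,V)] = \cap_k \Ker P^k[(W,V)]$, and $k_0=1$ already attains this, we conclude $k_{(W,V)}=1$.

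There is no serious obstacle here: the argument is essentially a one-line computation. The only point demanding care is the correct unwinding of the matrix-product definition of $P^1[(W,V)]$, and in particular noticing that the leading $W$ together with the trailing sum over $x$ cancels by the column-stochasticity of $W$, leaving exactly the matrix $V$. After that, the equivalence between membership in $\Ker P^1[(W,V)]$ and linear dependence of the columns $\{{V}_{*,x'}\}_{x' \in {\cal X}}$ is immediate, and the passage to $k_{(W,V)}=1$ is purely formal from the nesting of the kernels.
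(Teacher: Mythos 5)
Your argument is correct: unwinding the definition gives $P^1[(W,V)](y_1|x')=\sum_x W(x|x')V(y_1|x')=V(y_1|x')$ by column-stochasticity of $W$, so the kernel is exactly the space of linear dependence relations among the columns $\{V_{*,x'}\}_{x'\in{\cal X}}$, which is trivial by hypothesis, and $k_{(W,V)}=1$ then follows from $\cap_k\Ker P^k[(W,V)]\subseteq\Ker P^1[(W,V)]=\{0\}$. The paper itself states this proposition without proof, importing it from the reference \cite{LEH}, so there is no in-paper argument to compare against; your computation is the natural and essentially forced one.
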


Although Proposition \ref{KHG} guarantees the relation 
$\Ker P^{k_{(W,V)}}[(W,V)]=\{0\}$ under a certain condition for the transition matrix $V$,
the condition is too strong because it does not hold when $d_Y < d$.
Even when $d_Y < d$, we can expect the relations
$\Ker P^{k_{(W,V)}}[(W,V)]=\{0\}$ and ${\cal V}^{k_{P,(W,V)}}(P)={\cal V}_{{\cal X}}$
under some natural condition.
The following lemma shows how frequently these conditions hold.

\begin{proposition}[\protect{\cite[]{LEH}}]\Label{2-15-B}
We fix a transition matrix $V$, and assume the existence of $y \in {\cal Y}$ such that $ V_y$ is not a scalar times of $u_{{\cal X}}$.
The relations
$\Ker P^{k_{(W,V)}}[(W,V)]=\{0\}$ and ${\cal V}^{k_{P,(W,V)}}(P)={\cal V}_{{\cal X}}$
hold almost everywhere with respect to $W$ and $P$.
Also, the relations $\Ker P^{k_{(W,V)}}[(W,V)]=\{0\}$ and ${\cal V}^{k_{P,(W,V)}}(P_{W})={\cal V}_{{\cal X}}$
hold almost everywhere with respect to $W$.
\end{proposition}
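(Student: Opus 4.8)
The plan is to treat the two conditions $\Ker P^{k_{(W,V)}}[(W,V)]=\{0\}$ and ${\cal V}^{k_{P,(W,V)}}(P)={\cal V}_{{\cal X}}$ as the non-vanishing of certain polynomial functions in the entries of $W$ (and $P$), and then to invoke the standard fact that the zero set of a nonzero real-analytic (indeed polynomial) function has Lebesgue measure zero. The essential structural input is that each map $P^k[(W,V)](y_k,\ldots,y_1|\cdot)$ is, by its defining formula $\sum_{x} W D(V_{y_k}) W \cdots W D(V_{y_1})(x|x')$, a polynomial in the entries of $W$ with coefficients built from the fixed matrix $V$. Consequently the subspace $\Ker P^{k_{(W,V)}}[(W,V)]$ and the span ${\cal V}^{k}(P)$ are determined by the ranks of polynomial matrices in $W$ and $P$, and "full rank" is a Zariski-open (hence generic) condition provided it holds at even a single point.

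First I would reduce both conditions to a single exhibition problem. For the kernel condition, it suffices to find one pair $(W_0,V)$ — with the same fixed $V$ — for which $\Ker P^{k_{(W_0,V)}}[(W_0,V)]=\{0\}$; then the maximal minor of the stacked matrix $\big(P^{k}[(W,V)](y_k,\ldots,y_1|\cdot)\big)_{y_1,\ldots,y_k}$ for $k=d$ is a polynomial in $W$ that is nonzero at $W_0$, so its zero locus is null, and by Proposition \ref{KTG} the integer $k_{(W,V)}$ is bounded by $d$ uniformly, which keeps the relevant polynomial of bounded degree across the generic stratum. For the span condition, the same reasoning applies to the vectors $\{W D(V_{y_k})W\cdots W D(V_{y_1})P\}$: spanning ${\cal V}_{{\cal X}}$ is the non-vanishing of a suitable determinant in the entries of $W$ and $P$, again generic once it holds somewhere.

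The construction that makes the exhibition work is where the hypothesis enters: I would use the assumed $y\in{\cal Y}$ with $V_y$ not a scalar multiple of $u_{{\cal X}}$ so that $D(V_y)$ is not a multiple of the identity. Choosing $W_0$ generic (for instance close to a permutation-like or suitably "mixing" stochastic matrix) lets the operators $W_0 D(V_{y_k}) W_0 \cdots W_0 D(V_{y_1})$, as $k$ and the $y_j$ vary, generate a family rich enough that their common kernel is trivial and their images exhaust ${\cal V}_{{\cal X}}$; the genuine diagonal operator $D(V_y)\neq c\,I$ is exactly what breaks the degeneracy that would otherwise collapse these vectors into a proper subspace. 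For the last sentence of the statement — the analogous claim with the stationary distribution $P_W$ in place of an arbitrary $P$ — I would note that $P_W$ is itself a rational (hence real-analytic) function of $W$ on the irreducible locus, so the composite condition is again the non-vanishing of a real-analytic function of $W$ alone, and the same measure-zero argument applies after checking it is not identically zero.

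The main obstacle I anticipate is the passage from "a single good example exists" to "the bad set is genuinely measure zero," because $k_{(W,V)}$ and $k_{(P,(W,V))}$ are themselves data-dependent and could a priori jump on the degenerate set; the careful point is that Proposition \ref{KTG} caps these integers by $d$, so one may work uniformly with $k=d$ (and $k=2d$ for the span) and reduce everything to finitely many polynomials of bounded degree. A secondary technical care is the $P_W$ case, where one must confirm that substituting the analytic map $W\mapsto P_W$ does not accidentally force the polynomial to vanish identically; this I would dispatch by verifying the generic example above can be chosen with its own stationary distribution still yielding full span, so the real-analytic function is not the zero function.
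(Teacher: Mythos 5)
The paper does not actually prove Proposition \ref{2-15-B}: it is imported verbatim from \cite{LEH} and stated without argument, so there is no in-paper proof to compare your attempt against. Judged on its own, your genericity framework is the natural (and almost certainly the intended) one, and the reductions you make are sound: since $\sum_{y}D(V_y)$ is the identity and the columns of $W$ sum to one, the kernels $\Ker P^{k}[(W,V)]$ decrease in $k$, so by Proposition \ref{KTG} the condition $\Ker P^{k_{(W,V)}}[(W,V)]=\{0\}$ is equivalent to the full-rank condition for the single stacked polynomial matrix $P^{d}[(W,V)]$; the span condition is likewise the non-vanishing of a fixed determinant in $(W,P)$; and $W\mapsto P_W$ is real-analytic on the irreducible locus, so the last claim reduces to the same kind of statement. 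All of this correctly converts ``almost everywhere'' into ``the relevant polynomial (resp.\ real-analytic function) is not identically zero.''

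The genuine gap is that you never establish that last non-vanishing. The sentence ``choosing $W_0$ generic \ldots lets the operators generate a family rich enough that their common kernel is trivial'' is circular: that a generic $W_0$ works is exactly the conclusion of the proposition, and the zero set of a polynomial is guaranteed to be null only after one exhibits a single point where the polynomial is nonzero. This witness construction is where the hypothesis on $V$ must actually be deployed, and it is the entire content of the result --- note that the hypothesis is sharp, since if every $V_y$ were a scalar multiple of $u_{{\cal X}}$ then $P^{k}[(W,V)](y_k,\ldots,y_1|x')$ would be constant in $x'$ for every $W$ and every $k$, so the kernel would always contain the $(d-1)$-dimensional subspace $\{v: \langle u_{{\cal X}}|v\rangle =0\}$ and the conclusion would fail identically. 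A complete proof must produce, for an arbitrary fixed $V$ with one non-constant row $V_y$, an explicit irreducible $W_0$ (and, for the final claim, verify the condition at $P_{W_0}$) such that the vectors $D(V_{y_k})W_0\cdots W_0 D(V_{y_1})P_0$ span ${\cal V}_{{\cal X}}$ and the rows of $P^{d}[(W_0,V)]$ have trivial common kernel; for example, one would expect an inductive argument that a single non-constant diagonal $D(V_y)$, conjugated by suitably chosen powers or perturbations of $W_0$, separates any candidate invariant proper subspace. As written, the proposal is a correct reduction wrapped around an unproved core claim.
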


\subsection{Exponential family}
Next, to give a suitable parametrization,
we define the exponential family of pairs of transition matrices on ${\cal X}$ with ${\cal Y}$.
Firstly, we fix a par of transition matrices $(W,V)$ on ${\cal X}$, where $W$ is irreducible.
Then, we denote the stationary distribution of $W$ by $P_W$, and
denote the support of $(W,V)$ by
$({\cal X}^{2}\cup ({\cal Y}\times {\cal X}))_{(W,V)}:=
{\cal X}^{2}_{W} \cup ({\cal Y}\times {\cal X})_V$.
Also, we denote the linear space of real-valued functions 
$g=(g_a(x,x'),g_b(y,x'))$ defined on 
$({\cal X}^{2}\cup ({\cal Y}\times {\cal X}))_{(W,V)}$
by ${\cal G}_{(W,V)}$.
In this notation, 
for an element $(x,x')\in {\cal X}^{2}$, the function is given as $g_a(x,x')$,
and for an element $(y,x')\in {\cal Y}\times {\cal X}$, the function is given as $g_b(y,x')$.
In particular, we denote the linear space of real-valued functions 
$g=(g_a(x,x'),g_b(y,x'))$ defined on 
$({\cal X}^{2}\cup ({\cal Y}\times {\cal X}))_{(W,V)}$ with this constraint
$ \sum_{y \in {\cal Y}}V(y|x')g_b(y,x')=0$ for $x' \in {\cal X}$
by ${\cal G}_{0,(W,V)}$.
Additionally, we define the subspace 
${\cal N}_{(W,V)}^I$
as the subspace of functions with form $g_a(x,x')= f(x)-f(x')+c$ and $g_b(y,x')=0$.

To give the relation between $e$-representation and $m$-representation,
we define the linear map $(W,V)_*$ on 
${\cal G}_{(W,V)}$ as
\begin{align*}
((W,V)_*g)_a(x,x'):= g_a(x,x') W(x|x') , \quad
((W,V)_*g)_b(y,x'):= g_b(y,x') V(y|x')
\end{align*}
for $g\in {\cal G}_{(W,V)}$.
In the following, the function 
$g_a(x,x')$ is often written as a matrix $B$ on ${\cal V}_{{\cal X}}$,
and 
$g_b(y,x')$ is written as a collection of vectors $C=(C_y)_{y}$, which belong to 
${\cal V}_{{\cal X}}$.
That is, the map $(W,V)_{*}$ is rewritten as 
\begin{align}
(W,V)_{*}(B,C)= (W_* (B), (D(V_y) C_y)_y).
\end{align}
Hence, an element 
$(B,C) \in (W,V)_{*} {\cal G}_0
(({\cal X}^{2}\cup ({\cal Y}\times {\cal X}))_{(W,V)})$
satisfies $\sum_y C_y=0$.

Then, we define the subspaces of 
${\cal G}_{(W,V)}$ as
\begin{align}
{\cal L}_{1,W,V}^I :=&
\Big\{ 
(B,C)\in {\cal G}_{(W,V)}
 \Big|
B^T | u_{{\cal X}}\rangle =0,
\sum_{y \in {\cal Y}}C_{y}=0
\Big \} ,\nonumber \\
{\cal G}_{1,(W,V)}
:=& (W,V)_*^{-1}{\cal L}_{1,W,V}^I \nonumber \\
=&
 \Big\{(g_a, g_b) \Big| \forall x',\sum_x g_a(x, x')W(x|x') = 0,
\sum_y g_b(y, x')V (y|x') = 0\Big\}.
\end{align}

When functions $\hat{g}_1, \ldots, \hat{g}_l \in {\cal G}_{(W,V)}$
are linearly independent as elements of ${\cal G}_{(W,V)}
/{\cal N}^I_{(W,V)}$
for $\vec{\theta}:=(\theta^1, \ldots, \theta^l) \in \bR^l$,
we define the matrices 
\begin{align}
 V_{\vec{\theta}}(y|x')&:=
e^{\sum_{j=1}^l \theta^j \hat{g}_{j,b}(y,x')} V(y|x')
/\sum_{y'}e^{\sum_{j=1}^l \theta^j \hat{g}_{j,b}(y',x')} V(y'|x') \Label{NY1}\\
\overline{W}_{\vec{\theta}}(x|x')
&:=\sum_{y} e^{\sum_{j=1}^l \theta^j (\hat{g}_{j,a}(x, x') + \hat{g}_{j,b}(y,x'))}V(y|x') W(x|x'),\Label{NY2}
\end{align}
and denote the Perron-Frobenius eigenvalue by $\lambda_{\vec{\theta}}$.
Also, we denote the Perron-Frobenius eigenvector of 
the transpose $\overline{W}_{\vec{\theta}}^T$
by $\overline{P}^3_{\vec{\theta}}$.
Then, we define the transition matrix
\begin{align}
W_{\vec{\theta}}(x| x'):=
\lambda_{\vec{\theta}}^{-1} \overline{P}^3_{\vec{\theta}}(x)
\overline{W}_{\vec{\theta}}(x|x') \overline{P}^3_{\vec{\theta}}(x')^{-1}
\Label{NY3}
\end{align}
on ${\cal X}$.
We call $\{(W_{\vec{\theta}},V_{\vec{\theta}})\}_{\vec{\theta}}$ 
an exponential family of pairs of transition matrices generated by $\hat{g}_1, \ldots, \hat{g}_l $.
For the latter discussion, we define $l'$ be the dimension of the subspace
${\cal G}({\cal Y}) \cap <\hat{g}_1, \ldots, \hat{g}_l >$, where
$<\hat{g}_1, \ldots, \hat{g}_l >$ is the subspace spanned by $\hat{g}_1, \ldots, \hat{g}_l$
and ${\cal G}({\cal Y})$ is defined as the subspace 
of elements $g=(0,g_b(y,x'))$ in ${\cal G}_{(W,V)}$ such that 
$g_b(y,x')$ depends only on $y$.
Then, we reselect the generators as a basis $\hat{g}_1, \ldots, \hat{g}_l$ 
of the subspace $<\hat{g}_1, \ldots, \hat{g}_l >$
such that $\hat{g}_1, \ldots, \hat{g}_{l'} \in {\cal G}({\cal Y})$.
As explained in the paper \cite[Section 4.1]{LEH}, 
this exponential family can be regarded as a special case of 
exponential families of transition matrices on $\tilde{X}:={\cal X} \times {\cal Y}$.
The combination of \eqref{NY1}, \eqref{NY2}, and \eqref{NY3}
gives the form of $W_{\vec{\theta}}(x|x')V_{\vec{\theta}} (y|x')$ as
\begin{align*}
W_{\vec{\theta}}(x|x')V_{\vec{\theta}} (y|x') = 
\lambda_{\vec{\theta}}^{-1} \overline{P}^3_{\vec{\theta}}(x)
e^{\sum_j \theta^j (g_{j,a}(x,x')+g_{j,b}(y,x'))} 
V (y|x')W(x|x') \overline{P}^3_{\vec{\theta}}(x')^{-1}.
\end{align*}

\begin{example}[\protect{\cite[]{LEH}}]\Label{ERT}
As an example, we consider the full parameter model of pairs of transition matrices on ${\cal X}$ with ${\cal Y}$.
That is, we assume that the support
 $({\cal X}^{2}\cup ({\cal Y}\times {\cal X}))_{(W,V)}$ is 
 ${\cal X}^{2}\cup ({\cal Y}\times {\cal X})$
 and $W$ is irreducible.
The tangent space of the model is given by the space ${\cal L}_{1,W,V}^I $, whose dimension is 
$l:=d^2-d+d d_Y-d=d(d+d_Y-2)$.
In this case, we can easily find the generators as follows.
Here, we do not necessarily choose the generators from 
${\cal G}_{1,(W,V)}$.
That is, it is sufficient to choose them as elements of 
${\cal G}_{(W,V)}$.
Remember that ${\cal X}=\{1,\ldots, d\}$ and ${\cal Y}=\{1,\ldots, d_Y\}$.
To define $\hat{g}_j$ for $1 \le j \le l$,
we choose the functions 
$\hat{g}_{j,a}$ and $\hat{g}_{j,b}$ for $ 1\le j \le d_Y-1$, 
the functions 
$\hat{g}_{i(d_Y-1)+j,a}$ and 
$\hat{g}_{i(d_Y-1)+j,b}$ for $1\le i \le d-1, 1\le j \le d_Y-1$,
and the functions $\hat{g}_{d(d_Y-1)+(i-1)(d-1)+j,a}$ and $\hat{g}_{d(d_Y-1)+(i-1)(d-1)+j,b}$ 
for $1\le i \le d, 1\le j \le d-1$ as
\begin{align}
&\hat{g}_{j,a}(x,x') := 0\Label{LG1}, \quad
\hat{g}_{j,b}(y,x') :=\delta_{y,j} ,\quad
\hat{g}_{i(d_Y-1)+j,a}(x,x') := 0 \\
&\hat{g}_{i(d_Y-1)+j,b}(y,x') := \delta_{x',i}\delta_{y,j} \\
&\hat{g}_{d(d_Y-1)+(i-1)(d-1)+j,a}(x,x') := \delta_{x',i}\delta_{x,j} \\
&\hat{g}_{d(d_Y-1)+(i-1)(d-1)+j,b}(y,x') := 0 .\Label{LG2}
\end{align}
Then, the functions $\hat{g}_i=(\hat{g}_{i,a},\hat{g}_{i,b})$ are linearly independent.
We can parametrize the full model of pairs of transition matrices
by using these generators.
In particular, the first $d_Y-1$ functions belong to ${\cal G}({\cal Y})$.
That is, the maximum number $l'$ of observed generators is $d_Y-1$. 
Then, we have
the exponential family of pairs of transition matrices $(W_{\vec{\theta}},V_{\vec{\theta}})$,
which is generated by the above generators $\hat{g}_{1}, \ldots, \hat{g}_{l}$
at $(W,V)$.
\end{example}

The following proposition guarantees that
Example \ref{ERT} characterizes all pairs of transition matrices with full support.

\begin{proposition}[\protect{\cite[]{LEH}}]\Label{AGH}
\mh{If} the pair of transition matrices $(W,V)$ satisfies Condition
$({\cal X}^{2}\cup ({\cal Y}\times {\cal X}))_{(W,V)}
={\cal X}^{2}\cup ({\cal Y}\times {\cal X})$,
then the set $\{(W_{\vec{\theta}},V_{\vec{\theta}}) \}_{\vec{\theta}}$ defined in Example \ref{ERT}
equals the set of
pairs of transition matrices $(W',V')$ on ${\cal X}$ with ${\cal Y}$ satisfying the relation
 $({\cal X}^{2}\cup ({\cal Y}\times {\cal X}))_{(W',V')}
 ={\cal X}^{2}\cup ({\cal Y}\times {\cal X})$.
\end{proposition}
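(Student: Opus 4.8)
The plan is to prove the two inclusions separately. Write $S:=\{(W_{\vec{\theta}},V_{\vec{\theta}})\}_{\vec{\theta}}$ for the family of Example~\ref{ERT} and let $T$ be the set of pairs $(W',V')$ with $({\cal X}^{2}\cup {\cal Y}\times {\cal X})_{(W',V')}={\cal X}^{2}\cup {\cal Y}\times {\cal X}$. The inclusion $S\subset T$ is the easy half: for any $\vec{\theta}$ the tilting factor $e^{\sum_j\theta^j \hat{g}_{j,b}(y,x')}$ is strictly positive, so $V_{\vec{\theta}}$ is a full-support transition matrix; likewise $\overline{W}_{\vec{\theta}}(x|x')=\sum_y e^{\cdots}V(y|x')W(x|x')$ is a strictly positive matrix because $W,V$ have full support, whence its Perron--Frobenius eigenvector $\overline{P}^3_{\vec{\theta}}$ is strictly positive and the normalized matrix $W_{\vec{\theta}}(x|x')=\lambda_{\vec{\theta}}^{-1}\overline{P}^3_{\vec{\theta}}(x)\overline{W}_{\vec{\theta}}(x|x')\overline{P}^3_{\vec{\theta}}(x')^{-1}$ is strictly positive too. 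Hence $(W_{\vec{\theta}},V_{\vec{\theta}})\in T$.

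For the substantive inclusion $T\subset S$, I would exploit that the generators of Example~\ref{ERT} split into a purely observable $b$-type part (Groups~1--2, with $\hat{g}_{\cdot,a}=0$, which deform $V$) and a purely hidden $a$-type part (Group~3, with $\hat{g}_{\cdot,b}=0$, which deform $W$). Fix an arbitrary $(W',V')\in T$. First I match $V'$: as the $b$-parameters vary, the exponent $h(y,x'):=\sum_j\theta^j \hat{g}_{j,b}(y,x')$ sweeps out exactly all functions on ${\cal Y}\times {\cal X}$ that vanish at $y=d_Y$, and the choice $h(y,x')=\log\frac{V'(y|x')V(d_Y|x')}{V(y|x')V'(d_Y|x')}$ (which indeed vanishes at $y=d_Y$) gives $V_{\vec{\theta}}=V'$ after the per-column normalization. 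Note these $b$-parameters do not enter the formula for $V$ through any $a$-coordinate, so the $a$-coordinates remain free.

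With the $b$-parameters fixed, one computes $\overline{W}_{\vec{\theta}}(x|x')=e^{k(x,x')}c(x')W(x|x')$, where $c(x'):=\sum_y e^{h(y,x')}V(y|x')>0$ is the factor inherited from matching $V'$ and $k(x,x'):=\sum_j\theta^j \hat{g}_{j,a}(x,x')$ ranges, as the Group-3 parameters vary, over all functions on ${\cal X}^2$ vanishing at $x=d$. The normalized matrix $W_{\vec{\theta}}$ equals $W'$ precisely when $W'(x|x')/\big(e^{k(x,x')}c(x')W(x|x')\big)$ has the Perron--Frobenius gauge form $p(x)/(\lambda p(x'))$ for some positive $p$ and $\lambda>0$. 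The row $x=d$, on which $k$ is forced to vanish, forces $p(x')=\tfrac{p(d)}{\lambda}\cdot\frac{c(x')W(d|x')}{W'(d|x')}$, determining $\lambda$ uniquely and $p$ up to an irrelevant global scale; the remaining rows $x<d$ are then matched entry-by-entry by solving for $k(x,x')$, i.e.\ for the Group-3 coefficients. This produces $W_{\vec{\theta}}=W'$ and completes $T\subset S$.

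The delicate point — and the step I expect to be the real obstacle — is this last matching of $W'$: unlike the tilting of $V$, the deformation of $W$ passes through the Perron--Frobenius normalization, and the column factor $c(x')$ dragged in from matching $V'$ genuinely changes the normalized matrix, so the two matchings are not naively independent. The argument survives because the single row $x=d$ (where $k\equiv 0$) pins down the gauge exactly, leaving precisely the $d(d-1)$ free entries of Group~3 to realize the other rows. As a consistency check one invokes the linear independence of the $\hat{g}_i$ stated in Example~\ref{ERT} together with the identification of the tangent space with ${\cal L}_{1,W,V}^I$ of dimension $l=d(d+d_Y-2)$, which matches the number of free parameters of a full-support pair; thus $S$ and $T$ are manifolds of equal dimension and the constructed assignment $\vec{\theta}\mapsto(W_{\vec{\theta}},V_{\vec{\theta}})$ is a bijection.
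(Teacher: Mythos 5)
Your proposal is correct. Note that this manuscript does not actually prove Proposition \ref{AGH}; it is imported from the companion paper \cite{LEH} as a citation, so there is no in-paper argument to compare against. Your direct construction is a valid self-contained proof: the inclusion $S\subset T$ is immediate from strict positivity of the tilting factors and of the Perron--Frobenius eigenvector, and for $T\subset S$ the key observation is the correct one, namely that $\overline{W}_{\vec{\theta}}(x|x')=e^{k(x,x')}c(x')W(x|x')$ normalizes to $W'$ if and only if $e^{k(x,x')}c(x')W(x|x')=\lambda\,p(x)^{-1}W'(x|x')p(x')$ for some $\lambda>0$ and positive $p$ (since any positive vector satisfying $\overline{W}_{\vec{\theta}}^{T}p=\lambda p$ is automatically the Perron--Frobenius eigenvector when the matrix is strictly positive). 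Your gauge-fixing via the row $x=d$, where Group~3 forces $k(d,\cdot)=0$, correctly pins down $p$ up to scale and $\lambda$ by self-consistency at $x'=d$, after which the remaining $d(d-1)$ entries of $k$ are solved entry-by-entry; this is exactly the point where the interaction between the $V$-matching (through $c(x')$) and the $W$-matching must be controlled, and you handle it correctly. The closing dimension count is a sanity check rather than a needed step, and the claim of bijectivity of $\vec{\theta}\mapsto(W_{\vec{\theta}},V_{\vec{\theta}})$ is not required for (and not asserted by) the proposition, which only claims equality of the two sets.
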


\subsection{Local equivalence}
However, as discussed in Subsection \ref{s81}, we cannot necessarily distinguish all the elements of 
the above exponential family because due to the equivalence problem.
Here, we discuss the equivalence problem among generators. 
For this aim, we define several subspaces of ${\cal G}_{(W,V)}$.
First, we define the subspace ${\cal L}_{P,W,V}^I$ as
\begin{align}
{\cal L}_{P,W,V}^I:=&
\Big\{ 
(B,C)\in {\cal G}_{(W,V)}
 \Big|
\hbox{Conditions \eqref{C1} and \eqref{C2} hold.}
\Big \},
\end{align}
where
Conditions \eqref{C1} and \eqref{C2} are defined as
\begin{align}
& B^T | u_{{\cal X}}\rangle = 0,\Label{C1} \\
& (W D(C_{y})+B D(V_y)) ({\cal V}^{k_{(P,(W,V))}}(P) +\Ker P^{k_{(W,V)}}[(W,V)] )
\subset  \Ker  P^{k_{(W,V)}}[(W,V)] 
\Label{C2}.
\end{align}
Here, ${\cal V}^{k_{(P,(W,V))}}(P) +\Ker P^{k_{(W,V)}}[(W,V)]$ expresses the subspace of 
${\cal V}_{{\cal X}}$ generated by $\Ker P^{k_{(W,V)}}[(W,V)]$ and the representatives of 
${\cal V}^{k_{(P,(W,V))}}(P)$ while
${\cal V}^{k_{(P,(W,V))}}(P)$ is a subspace of 
the quotient space ${\cal V}_{{\cal X}}/\Ker P^{k_{(W,V)}}[(W,V)]$.
\if0
For a pair of transition matrices $(W,V)$ and an element $x \in {\cal X}$, 
we define the subset $S(V)_x \subset{\cal X}$
by $S(V)_x := \{ x'\in {\cal X} |V_{*,x}=V_{*,x'}\}$.
For a subset $S \subset {\cal X}$,
we define the subspace ${\cal V}_{S}\subset {\cal V}_{{\cal X}}$ as 
the set of functions whose support is included in $S$.
The projection to ${\cal V}_{S}$ is denoted by $I_{S}$.
\fi
Then, 
we define the subspaces as
\begin{align}
{\cal L}_{2,W,V}^I
:=&
\{ ([W,A] , C ) | 
A^T| u_{{\cal X}} \rangle=0,~
W[D(V_y),A]=W D(C_y)
\} \Label{YF1}.
\end{align}
When $W$ is invertible, we define ${\cal N}^I_{2,(W,V)} :=
(W,V)_*^{-1}({\cal L}_{2,\vec{W}}^I )$ and
${\cal N}^I_{P,(W,V)} :=
(W,V)_*^{-1}({\cal L}_{P,\vec{W}}^I )$.
By using ${\cal N}_{(W,V)}$ and these spaces,
the following proposition characterizes 
the equivalent conditions for generators in the asymptotic setting.
That is, it addresses what generators yield an observed infinitesimal change.

\begin{proposition}[\protect{\cite[]{LEH}}]\Label{27-3}
Assume that the transition matrix $W$ is irreducible.
Then, the following conditions are equivalent for 
functions $g_1, \ldots, g_l \in {\cal G}_{1,(W,V)}$ 
and a vector $\vec{a} \in \mathbb{R}^l$, where
$\{(W_{\vec{\theta}},V_{\vec{\theta}})\}_{\vec{\theta}}$ is the exponential family of 
pairs of transition matrices 
generated by the generators $\{g_i\}_{i=1}^l$.
\begin{description}
\item[(1)]
The function $\sum_{j=1}^l a^j g_j \in {\cal G}_{1,(W,V)}$
belongs to $
{\cal N}_{2,(W,V)}^I
+{\cal N}_{P_{W},(W,V)}^I$.

\item[(2)]
The relation $\sum_{j=1}^l a^j \frac{\partial }{\partial \theta^j}
P^{k}[(W_{\vec{\theta}},V_{\vec{\theta}})]
\cdot P_{W_{\vec{\theta}}} \Big|_{\vec{\theta}=0}
= 0$ holds for any positive integer $k$.

\item[(3)]
The relation $\sum_{j=1}^l a^j \frac{\partial }{\partial \theta^j}
P^{k_{(W,V)}+k_{(P_{(W,V)},(W,V))}+1}
[(W_{\vec{\theta}},V_{\vec{\theta}})]
\cdot P_{W_{\vec{\theta}}} \Big|_{\vec{\theta}=0} = 0$ holds.

\item[(4)]
The relation $\sum_{j=1}^l a^j \frac{\partial }{\partial \theta^j}
P^{k}[(W_{\vec{\theta}},V_{\vec{\theta}})]
\cdot P_{W_{\vec{\theta}}} \Big|_{\vec{\theta}=0} = 0$ holds
with a certain integer $k \ge k_{(W,V)}+k_{(P_{(W,V)},(W,V))}+1$.
\end{description}
\end{proposition}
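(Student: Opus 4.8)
The plan is to establish the cycle $(1)\Rightarrow(2)\Rightarrow(3)\Rightarrow(4)\Rightarrow(1)$. The implications $(2)\Rightarrow(3)$ and $(3)\Rightarrow(4)$ are immediate: $(2)$ asserts vanishing for every $k$, hence in particular for the threshold $k_{(W,V)}+k_{(P_W,(W,V))}+1$ of $(3)$, and that same value is an admissible choice of the ``certain integer'' in $(4)$. The content lies in $(1)\Rightarrow(2)$ and the converse $(4)\Rightarrow(1)$. Throughout I would work with the $m$-representation $(B,C):=(W,V)_*(\sum_{j}a^jg_j)$ of the tangent direction. Writing each joint probability as the sandwich $\langle u_{{\cal X}}|D(V_{y_k})WD(V_{y_{k-1}})W\cdots WD(V_{y_1})P_{W_{\vec{\theta}}}\rangle$, the derivative $\sum_j a^j\partial_{\theta^j}(\cdot)|_{\vec{\theta}=0}$ expands by the product rule into ``insertion'' terms: at one $W$-site a factor $W$ is replaced by $B$, at one $V$-site a factor $D(V_{y_i})$ is replaced by $D(C_{y_i})$, plus one boundary term differentiating the stationary vector $P_{W_{\vec{\theta}}}$. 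Since $W_{\vec{\theta}}$ is stochastic for all $\vec{\theta}$, one has $B^T|u_{{\cal X}}\rangle=0$, which is built into ${\cal G}_{1,(W,V)}$ and \eqref{C1}.

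For $(1)\Rightarrow(2)$, by linearity it suffices to treat separately a tangent in ${\cal N}_{2,(W,V)}$ and one in ${\cal N}_{P_W,(W,V)}$. If the tangent lies in ${\cal N}_{2,(W,V)}$, then by \eqref{YF1} its $m$-representation has the commutator form $([W,A],C)$ with $A^T|u_{{\cal X}}\rangle=0$ and $W[D(V_y),A]=WD(C_y)$; this is the infinitesimal form of a similarity transformation of the hidden coordinates generated by $A$, under which the sandwich is invariant, so the insertion terms telescope and the derivative vanishes for all $k$. If the tangent lies in ${\cal N}_{P_W,(W,V)}$, I would group the two insertion terms acting at each $V$-site into the single operator $WD(C_{y_i})+BD(V_{y_i})$; the vector standing to its right, $WD(V_{y_{i-1}})\cdots WD(V_{y_1})P_W$, represents an element of ${\cal V}^{k_{(P_W,(W,V))}}(P_W)+\Ker P^{k_{(W,V)}}[(W,V)]$, so \eqref{C2} sends it into $\Ker P^{k_{(W,V)}}[(W,V)]$, while the left factor $\langle u_{{\cal X}}|D(V_{y_k})W\cdots$ annihilates any kernel contribution after $k_{(W,V)}$ further steps. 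As $k$ exceeds the threshold, every insertion term, and the boundary term, vanish.

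The converse $(4)\Rightarrow(1)$ is the crux. Given vanishing for a single $k\ge k_{(W,V)}+k_{(P_W,(W,V))}+1$, I would first upgrade it to vanishing for all $k$, i.e. to $(2)$, using the two stabilization facts that define the threshold: $\Ker P^{k}[(W,V)]$ is constant for $k\ge k_{(W,V)}$ and the reachable subspaces ${\cal V}^{k}(P_W)$ stabilize for $k\ge k_{(P_W,(W,V))}$, together with the bounds of Proposition \ref{KTG}. These guarantee that the insertion sum at any $k$ is determined by its value at the threshold, so vanishing propagates in both directions. Reading the all-$k$ vanishing as the statement that the $m$-representation $(B,C)$ is null as a linear functional on all $y$-words, the final step is to identify this null space with ${\cal L}_{2,W,V}^I+{\cal L}_{P,W,V}^I$ and to pull it back through $(W,V)_*$, yielding membership in ${\cal N}_{2,(W,V)}+{\cal N}_{P_W,(W,V)}$.

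The main obstacle is precisely this identification. Because ${\cal L}_{2,W,V}^I$ and ${\cal L}_{P,W,V}^I$ are not in general in direct sum, I must show the decomposition is simultaneously possible and exhaustive: every observationally null tangent splits into a pure-gauge part living in ${\cal L}_{2,W,V}^I$ and a remainder forced to satisfy \eqref{C2}, with no further null directions escaping both descriptions. Controlling the interaction of the observability filtration $\{\Ker P^{k}[(W,V)]\}$ with the reachability filtration $\{{\cal V}^{k}(P_W)\}$ at the stabilized level is the delicate linear-algebraic heart of the argument, and is exactly where the definitions of $k_{(W,V)}$ and $k_{(P_W,(W,V))}$ do their real work.
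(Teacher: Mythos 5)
First, a point of reference: this paper does not prove Proposition \ref{27-3} at all. It is stated as a citation to \cite{LEH} inside a section that explicitly announces itself as a review of that paper, and no proof appears in the appendices either; so there is no in-house proof to compare your argument against, and any verification has to be made against \cite{LEH} itself.

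On the merits of your proposal: the trivial implications and the outline of $(1)\Rightarrow(2)$ are reasonable. The insertion-term expansion of $\sum_j a^j \partial_{\theta^j} P^{k}[(W_{\vec{\theta}},V_{\vec{\theta}})]\cdot P_{W_{\vec{\theta}}}$ is the right computation, the telescoping for the commutator form $([W,A],C)$ with $W[D(V_y),A]=WD(C_y)$ is the standard gauge argument, and condition \eqref{C2} is indeed designed so that each grouped insertion $WD(C_{y_i})+BD(V_{y_i})$ lands in $\Ker P^{k_{(W,V)}}[(W,V)]$ and is then annihilated by the remaining left factor. Even here you leave a loose end: the boundary term coming from $\partial_{\theta^j}P_{W_{\vec{\theta}}}$ is asserted to vanish without an argument, and it does require one (one has to use the differentiated stationarity equation to absorb it either into the insertion sum or into the reachable subspace ${\cal V}^{k_{(P_W,(W,V))}}(P_W)$, which is precisely why that subspace appears in \eqref{C2}). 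The genuine gap, however, is $(4)\Rightarrow(1)$: you correctly identify it as the crux and then do not prove it. ``Upgrade vanishing at one $k$ above the threshold to vanishing at all $k$'' and ``identify the null space of the insertion functional with ${\cal L}_{2,W,V}^I+{\cal L}_{P,W,V}^I$'' are statements of what must be shown, not arguments; the second is essentially the content of the proposition, and the stabilization of the kernel and reachability filtrations does not by itself produce the decomposition of an arbitrary observationally null tangent into a gauge part in ${\cal L}_{2,W,V}^I$ and a part satisfying \eqref{C1}--\eqref{C2}. As written, the proposal establishes (modulo the boundary term) only that $(1)$ implies $(2)$, $(3)$ and $(4)$; the equivalence is not obtained.
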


Due to this theorem, under the above identification, 
the local and asymptotic equivalence class at $\theta$ is given as 
the quotient space 
${\cal G}_{(W,V)}/
({\cal N}_{(W,V)}^I
+{\cal N}_{P_{W},(W,V)}^I
+{\cal N}_{2,(W,V)}^I)$.
When the generators of our exponential family
are not linearly independent in the sense of the quotient space 
${\cal G}_{(W,V)}/
({\cal N}_{(W,V)}^I+{\cal N}_{P_{W},(W,V)}^I
+{\cal N}_{2,(W,V)}^I)$, the parametrization around $(W,V)$
does not express distinguishable information.
That is, the parametrization is considered to be redundant.
 
\begin{proposition}[\protect{\cite[]{LEH}}]\Label{VDT}
We assume that all the vectors $V_{*,x}$ are different
and the relations $ \Ker P^{k_{(W,V)}}[(W,V)]=\{0\}$ and ${\cal V}^{k_{(P,(W,V))}}(P)
={\cal V}_{{\cal X}}$ hold.
Then, the relations
$ \dim {\cal L}_{2,W,V}^I=\dim {\cal L}_{P,W,V}^I=0$ hold.
\end{proposition}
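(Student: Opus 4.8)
The plan is to use the two standing hypotheses $\Ker P^{k_{(W,V)}}[(W,V)]=\{0\}$ and ${\cal V}^{k_{(P,(W,V))}}(P)={\cal V}_{{\cal X}}$ to collapse the subspace-containment conditions defining ${\cal L}_{P,W,V}^I$ and ${\cal L}_{2,W,V}^I$ into exact operator identities, and then to deduce $(B,C)=0$ by pure linear algebra. Since ${\cal V}^{k_{(P,(W,V))}}(P)+\Ker P^{k_{(W,V)}}[(W,V)]={\cal V}_{{\cal X}}$ while the target $\Ker P^{k_{(W,V)}}[(W,V)]=\{0\}$, Condition \eqref{C2} becomes the requirement that $W D(C_y)+B D(V_y)=0$ as an operator on all of ${\cal V}_{{\cal X}}$, for every $y$. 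Throughout I would use that $W$ is invertible (the standing assumption under which ${\cal L}_{2,W,V}^I$ is defined), that $W$ is a transition matrix so $W^T u_{{\cal X}}=u_{{\cal X}}$, and that $\sum_y D(V_y)=D(\sum_y V_y)=D(u_{{\cal X}})$ is the identity because $V$ is a transition matrix.

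For ${\cal L}_{P,W,V}^I$ I would first sum $W D(C_y)+B D(V_y)=0$ over $y$, obtaining $W D(\sum_y C_y)+B=0$, hence $B=-W D(\sum_y C_y)$. Feeding this into Condition \eqref{C1} gives $B^T u_{{\cal X}}=-D(\sum_y C_y)W^T u_{{\cal X}}=-D(\sum_y C_y)u_{{\cal X}}=-\sum_y C_y=0$, so $\sum_y C_y=0$ and therefore $B=0$. The per-$y$ identities then reduce to $W D(C_y)=0$, and invertibility of $W$ forces $C_y=0$ for every $y$. Thus $(B,C)=0$ and $\dim {\cal L}_{P,W,V}^I=0$.

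For ${\cal L}_{2,W,V}^I$, whose elements are $([W,A],C)$ with $A^T u_{{\cal X}}=0$ and $W[D(V_y),A]=W D(C_y)$, invertibility of $W$ reduces the constraint to $[D(V_y),A]=D(C_y)$. The key observation is that a commutator of $A$ with a diagonal matrix has vanishing diagonal, so taking diagonal parts immediately yields $D(C_y)=0$, i.e.\ $C_y=0$ for all $y$; the off-diagonal $(x,x')$-entry then reads $(V(y|x)-V(y|x'))\,A(x,x')=0$. Here I would invoke the hypothesis that all vectors $V_{*,x}$ are distinct: for $x\ne x'$ some $y$ satisfies $V(y|x)\ne V(y|x')$, forcing $A(x,x')=0$, so $A=D(a)$ is diagonal. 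For diagonal $A$ the entry $[W,A](x,x')=W(x|x')(a_{x'}-a_x)$ vanishes on the support of $W$, and irreducibility of $W$ propagates $a_x=a_{x'}$ across the whole state space, so $A$ is scalar and $[W,A]=0$. Hence the element $([W,A],C)$ is zero and $\dim {\cal L}_{2,W,V}^I=0$.

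The computation is short once the hypotheses are deployed correctly, so the main care-points rather than genuine obstacles are: (i) verifying that ${\cal V}^{k_{(P,(W,V))}}(P)={\cal V}_{{\cal X}}$ together with triviality of the kernel really turns the containment \eqref{C2} into an operator equation on the full space ${\cal V}_{{\cal X}}$, not merely on a proper subspace; and (ii) confirming invertibility of $W$, which is used both to pass from $W D(C_y)=0$ to $C_y=0$ in the ${\cal L}_{P,W,V}^I$ case and to strip $W$ off the defining relation of ${\cal L}_{2,W,V}^I$. The single step where distinctness of the $V_{*,x}$ is genuinely essential is the one forcing $A$ to be diagonal; without it $A$ could carry off-diagonal mass on pairs $(x,x')$ with $V_{*,x}=V_{*,x'}$, which is precisely the phenomenon tracked by the sets $S(V)_x$.
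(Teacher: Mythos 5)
The paper does not actually prove this proposition; it is imported verbatim from \cite{LEH}, so there is no in-paper argument to compare against, and your attempt has to be judged on its own terms. Your treatment of ${\cal L}_{P,W,V}^I$ is correct: the hypotheses do collapse \eqref{C2} to the operator identity $W D(C_y)+B D(V_y)=0$ on all of ${\cal V}_{{\cal X}}$, summing over $y$ gives $B=-W D(\sum_y C_y)$, condition \eqref{C1} then forces $\sum_y C_y=0$ and hence $B=0$, and finally $C_y=0$. One small remark: you do not need $W$ to be invertible at that last step (and the paper does not assume invertibility in the definition of ${\cal L}_{P,W,V}^I$, only for ${\cal L}_{2,W,V}^I$); the $(x,x')$ entry of $W D(C_y)$ is $W(x|x')C_y(x')$, and since every column of a transition matrix is a nonzero probability vector, $W D(C_y)=0$ already forces $C_y=0$.

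The ${\cal L}_{2,W,V}^I$ branch contains a genuine gap at its final step. After correctly stripping $W$ from $W[D(V_y),A]=W D(C_y)$, reading off $C_y=0$ from the vanishing diagonal of a commutator with a diagonal matrix, and using the distinctness of the vectors $V_{*,x}$ to force $A$ to be diagonal, you assert that $[W,A](x,x')=W(x|x')(a_{x'}-a_x)$ ``vanishes on the support of $W$'' and then invoke irreducibility to conclude $a$ is constant. Nothing in the definition of ${\cal L}_{2,W,V}^I$ imposes $[W,A]=0$; that is precisely the conclusion you are trying to establish, so as written the step is circular. The repair is immediate and uses the one hypothesis you list but never deploy in this branch: for a diagonal $A=D(a)$ the condition $A^T|u_{{\cal X}}\rangle=0$ reads $\sum_x A(x,x')=a_{x'}=0$ for every $x'$, so $A=0$, hence $[W,A]=0$ and the element is $(0,0)$. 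With that one-line substitution (which also makes the appeal to irreducibility unnecessary) the proof is complete.
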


Since all the vectors $V_{*,x}$ are different almost everywhere with respect to $V$,
Propositions \ref{2-15-B} and \ref{VDT} guarantee that
the relation $ \dim {\cal L}_{2,W,V}^I= \dim {\cal L}_{P,W,V}^I=0$
holds almost everywhere with respect to $V,W$ in the exponential family $(W_{\vec{\theta}},V_{\vec{\theta}})$ of Example \ref{ERT}.
However, in several points, the dimensions of these spaces are not zero.
We call such points {\it singular points}.

\section{Estimation in hidden Markovian model}\Label{s6B}
\subsection{Application of Theorem \ref{6-13-th}}\Label{s6B1}
In this section, we apply the framework of a partial observation model given in Section \ref{s4}
to an exponential family of transition matrices
as $k-1$-memory transition matrices given in Section \ref{s4-5}.
For this aim, we prepare several notions for the application of Theorem \ref{6-13-th}.

When the transition matrix $W$ is ergodic
and the image of the matrix $V$ is the vector space ${\cal V}_{{\cal Y}}$, 
we can show that the transition matrix $P^{2|1}_{XY}[(W,V)]$ on ${\cal X}\times {\cal Y}$ is ergodic
in the similar way as Lemma \ref{STR3-2}.
Hence, we call the pair $(W,V)$ of transition matrices ergodic in this case.
In this section, we assume this property.
We prepare generators $\hat{g}_1, \ldots, \hat{g}_l$ defined on 
the support of $(W,V)$, i.e., $({\cal X}^{2}\cup ({\cal Y}\times {\cal X}))_{(W,V)}=
{\cal X}^{2}_{W} \cup ({\cal Y}\times {\cal X})_V$, where
$\hat{g}_i$ has a form $(\hat{g}_{i,a},\hat{g}_{i,b})$ for $i=1, \ldots, l$.
We assume that the initial $l'$ generators $\hat{g}_1, \ldots, \hat{g}_{l'}$ are given 
as functions of ${\cal Y}$, i.e., $\hat{g}_{i,a}=0$ 
and $\hat{g}_{i,b}(y,x)$ depends only on $y$ for $i=1, \ldots, l'$.
Hence, $\hat{g}_i$ can be regarded as an element of ${\cal G}({\cal Y})$ for
$i=1, \ldots, l'$.
Then, we address 
the exponential family of pairs of transition matrices $(W_{\vec{\theta}},V_{\vec{\theta}})$, which is generated 
by the generators $\hat{g}_1, \ldots, \hat{g}_l$ with $(W,V)$.
That is, we treat 
the pair of transition matrices $(W_{\vec{\theta}},V_{\vec{\theta}})$
as $k-1$-memory transition matrices 
$P_{XY}^{k|k-1}[(W_{\vec{\theta}},V_{\vec{\theta}})]$
on $\tilde{{\cal X}}:={\cal X} \times {\cal Y}$ in the following way. 
\begin{align*}
P_{XY}^{k|k-1}[(W_{\vec{\theta}},V_{\vec{\theta}})](x_k,y_{k}|x_{k-1},y_{k-1}, \ldots ,x_1, y_{1})
:= W_{\vec{\theta}}(x_k|x_{k-1})V_{\vec{\theta}}(y_{k}|x_{k-1}).
\end{align*}
In particular, we have
\begin{align*}
P_{XY}^{k|k-1}[(W,V)](x_k,y_{k}|x_{k-1},y_{k-1}, \ldots ,x_1, y_{1})
:= W(x_k|x_{k-1})V(y_{k}|x_{k-1}).
\end{align*}
Since the pair $(W,V)$ of transition matrices is ergodic,
Lemma \ref{STR3-2} guarantees that the $k-1$-memory transition matrix $P_{XY}^{k|k-1}[(W,V)]$ is ergodic.
When the distribution $P$ on ${\cal X}$ has the full support,
the support of $P^{k}[(W_{\vec{\theta}},V_{\vec{\theta}})]\cdot P$
does not depend on the parameter $\vec{\theta}$,
and equals that of $P^{k}[(W,V)]\cdot P$.
Hence, we denote the support by $ {\cal Y}^k_{(W,V)}$.
We focus on the function space ${\cal G}( {\cal Y}^k_{(W,V)})$ on $ {\cal Y}^k_{(W,V)}$,
and the subspace ${\cal N}( {\cal Y}^k_{(W,V)})$, which is defined as
the subspaces of functions of the form
$f(y_k, y_{k-1}, \ldots, y_2 )-f(y_{k-1}, y_{k-2}, \ldots, y_1 )+c$ for 
$(y_k, y_{k-1}, \ldots, y_1 ) \in {\cal Y}^k_{(W,V)}$.
In the following, 
we identify an element $g \in {\cal G}( {\cal Y})$
with an element $\bar{g}$ of ${\cal G}( {\cal Y}^k_{(W,V)})$ 
as $\bar{g}(y_k,\ldots, y_1):= g(y_k)$.
In this way, ${\cal G}( {\cal Y})$ can be regarded as a subspace of 
${\cal G}( {\cal Y}^k_{(W,V)})$.
Hence, the generators $\hat{g}_1, \ldots, \hat{g}_{l'}$
can be regarded as linearly independent as elements of ${\cal G}({\cal Y}^{k}_{(W,V)})/{\cal N}({\cal Y}^{k}_{(W,V)})$.
We choose observed generators $g_1, \ldots, g_{l_1}$
as elements of ${\cal G}({\cal Y}^{k}_{(W,V)})/{\cal N}({\cal Y}^{k}_{(W,V)})$
such that $g_1, \ldots, g_{l_1}, {\hat{g}}_1, \ldots, {\hat{g}}_{l'}$ 
is a basis of the space ${\cal G}({\cal Y}^{k}_{(W,V)})/{\cal N}({\cal Y}^{k}_{(W,V)})$.
Then, we choose $l_2:=l'$ and $l_3:= l-l'$,
and define $g_{l_1+i}$ to be ${\hat{g}}_{i}$ for $i=1, \ldots, l$.

Using the generators $g_1, \ldots, g_{l_1+l_2+l_3}$ and 
the $k-1$-memory transition matrix $P_{XY}^{k|k-1}[(W,V)]$ on ${\cal Y}\times{\cal X}$, 
we define the exponential family of
$k-1$-memory transition matrices $P_{XY|\vec{\theta}_1,\vec{\theta}_{2,3}}^{k|k-1}[(W,V)]$ on $\tilde{X}$,
where $\vec{\theta}_1=(\theta^1, \ldots,\theta^{l_1})$
and $\vec{\theta}_{2,3}=(\theta^{l_1+1}, \ldots,\theta^{l_1+l_2+l_3})$.
Since $P_{XY|0,\vec{\theta}_{2,3}}^{k|k-1}[(W,V)]
= P_{XY}^{k|k-1}[(W_{\vec{\theta}_{2,3}},V_{\vec{\theta}_{2,3}})]$,
we can apply the method in Section \ref{s4} to 
the exponential family of
$k-1$-memory transition matrices $P_{XY|\vec{\theta}_1,\vec{\theta}_{2,3}}^{k|k-1}[(W,V)]$ on $\tilde{\cal X}$,
for the estimation of the parameter of 
the exponential family of pairs of transition matrices $(W_{\vec{\theta}},V_{\vec{\theta}})$.

Here, to give a concrete form of $P_{XY|\vec{\theta} }^{k|k-1}[(W,V)]$ for
$\vec{\theta}=(\vec{\theta}_1,\vec{\theta}_{2,3})$,
we define a matrix $\tilde{W}^{k|k-1}_{\vec{\theta}}
$ on $\tilde{\cal X}^{k-1}$ as
\begin{align*}
&\tilde{W}^{k|k-1}_{\vec{\theta}}
(\tilde{x}_{k-1},\ldots, \tilde{x}_1| \tilde{x}_{k-1}',\ldots, \tilde{x}_1')
\\
:=&
\delta_{\tilde{x}_{k-2},\tilde{x}_{k-1}'}\ldots,
\delta_{\tilde{x}_{1},\tilde{x}_{2}'}
P_{XY}^{k|k-1}[(W,V)](\tilde{x}_{k-1}|\tilde{x}_{k-1}',\ldots, \tilde{x}_1')
e^{\sum_{i=1}^{l_1+l_2+l_3}\theta^i g_i(\tilde{x}_{k-1},\tilde{x}_{k-1}',\ldots, \tilde{x}_1') }.
\end{align*}
We denote the Perron-Frobenius eigenvalue 
and the Perron-Frobenius eigenvector of 
$(\tilde{W}^{k|k-1}_{\vec{\theta}})^T$
by $\lambda_{\vec{\theta}}$ and $P_{\vec{\theta}}^{k|k-1}$,
respectively.
Then, $P_{XY|\vec{\theta} }^{k|k-1}[(W,V)]$ is written as
\begin{align*}
&P_{XY|\vec{\theta}}^{k|k-1}[(W,V)](\tilde{x}_k|\tilde{x}_{k-1},\ldots, \tilde{x}_1)\\
=&
\lambda_{\vec{\theta}}^{-1}
P_{\vec{\theta}}^{k|k-1}(\tilde{x}_{k},\ldots, \tilde{x}_2)
\tilde{W}^{k|k-1}_{\vec{\theta}}
(\tilde{x}_{k},\ldots, \tilde{x}_2| \tilde{x}_{k-1},\ldots, \tilde{x}_1)
P_{\vec{\theta}}^{k|k-1}(\tilde{x}_{k-1},\ldots, \tilde{x}_1)^{-1}
\end{align*}
for $(\tilde{x}_{k},\tilde{x}_{k-1},\ldots, \tilde{x}_1)\in \tilde{\cal X}^k$.

\begin{example}\Label{ERT2}
As a typical example,
we apply the above discussion to 
the exponential family $(W_{\vec{\theta}},V_{\vec{\theta}})$ given in Example \ref{ERT}.
Hence, we assume the same assumption as Example \ref{ERT}.
Since $({\cal X}^{2}\cup ({\cal Y}\times {\cal X}))_{(W,V)}
={\cal X}^{2}\cup ({\cal Y}\times {\cal X})$,
the dimension of ${\cal G}({\cal Y}^{k})/{\cal N}({\cal Y}^{k})$ is
$d_Y^{k}-d_Y^{k-1} $.
Since $l_2= l'= d_Y-1$, we have $l_1= d_Y^{k}-d_Y^{k-1} -d_Y+1$ and $l_3=l-l_2=(d-1)(d-1+d_Y)$.
In the following, we choose $l_1$ functions
$g_{1}, \ldots, g_{l_1}$ as elements of 
${\cal G}({\cal Y}^k)$.

Now, we identify the set ${\cal Y}$ with the set $\{1, \ldots, d_Y\} $,
and the set ${\cal Y}^{k-1}$ with the set $\{1, \ldots, d_Y^{k-1}\} $.
Then, we define the functions 
$g_{i+(j-1) (d_Y^{k-1}-1) }$ on ${\cal Y}^k$
for $i=1, \ldots, d_Y^{k-1}-1$ and $j=1,\ldots, d_Y-1 $ as
\begin{align}
g_{i+(j-1) (d_Y^{k-1}-1) }(y_k, \vec{y}):=
\delta_{j,y_k}\delta_{i,\vec{y}},
\Label{eq2-10}
\end{align}
where $y_k \in {\cal Y}$ and $\vec{y} \in {\cal Y}^{k-1}$.
Then, we define $g_{l_1+1}, \ldots, g_{l_1+l_2+l_3}$ by
$ g_{l_1+i}:= \hat{g}_{i}$ for $i=1, \ldots, l_2+l_3$, where
$\hat{g}_{1}, \ldots \hat{g}_{l_2+l_3} $ are given in \eqref{LG1}--\eqref{LG2}.
\end{example}

Now, we apply the framework of 
a partial observation model of $k-1$-memory transition matrices
given in Subsection \ref{s5-2} to the case
when ${\cal X}$ is ${\cal Y} \times {\cal X}$,
the generators $g_1, \ldots, g_{l_1+l_2+l_3}$ are given in the above way,
and the $k-1$-memory transition matrix $W$ is given by 
the $k-1$-memory transition matrix $P_{XY}^{k|k-1}[(W,V)]$ on ${\cal Y}\times {\cal X}$.
While the projective Fisher information matrix $ \tilde{\mathsf{H}}_{\vec{\theta}_{2,3;o}}$ is defined in \eqref{FTE} and plays a central role in Section \ref{s4},
$ \tilde{\mathsf{H}}_{\vec{\theta}_{2,3;o}}^{k|k-1}$ expresses this matrix for 
the exponential family of
$k-1$-memory transition matrices $P_{XY|\vec{\theta}_1,\vec{\theta}_{2,3}}^{k|k-1}[(W,V)]$ on $\tilde{\cal X}$.
The aim of this application is the estimation of 
the parameter $\vec{\theta}_{2,3}$ to identify 
the $k-1$-memory transition matrix $P_{XY}^{k|k-1}[(W_{\vec{\theta}},V_{\vec{\theta}})]$
from the average of the observation on ${\cal Y}^k$.
In this application, we employ 
the estimator $\hat{\theta}_{2,3}(\vec{Y}_{1,2}^n) $ given in \eqref{eq6-9-1B}. 
However, in order that Theorem \ref{6-13-th} gives its asymptotic behavior,
we need Conditions {\bf C1}, {\bf C2}, and {\bf C3} in Subsection \ref{s4}.
At least, the projective Fisher information matrix 
$ \tilde{\mathsf{H}}_{\vec{\theta}_{2,3;o}}^{k|k-1}$ needs to be invertible.
For this issue, we have the following lemma by considering the condition 
{\bf B1} of Lemma \ref{6-13-thL}.

\begin{lemma}\Label{L-1-25-Y}
The following conditions are equivalent for a fixed positive integer $k$, $\vec{\theta}_0$,
and the generators $g_{l_1+1}, \ldots, g_{l_1+l_2+l_3} $.
\begin{description}
\item[E1]
We focus on the distribution $P^k[(W_{\vec{\theta}},V_{\vec{\theta}})]
\cdot P_{W_{\vec{\theta}}}$ on ${\cal Y}^{k}$.
The partial derivatives $\frac{\partial }{\partial \theta^j}P^k[(W_{\vec{\theta}},V_{\vec{\theta}})]
\cdot P_{W_{\vec{\theta}}}
|_{\vec{\theta}=\vec{\theta}_0}$ 
 are linearly independent for $j=l_1+1, \ldots, l_1+l_2+l_3$. 

\item[E2]
The Jacobi matrix condition, i.e.,
the condition of Lemma \ref{6-13-thL} holds at $\vec{\theta}_0$.
\end{description}
\end{lemma}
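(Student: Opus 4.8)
The plan is to show that both D1 and D2 are equivalent to condition B1 of Lemma \ref{6-13-thL}, namely that the Jacobi matrix $A(\vec{\theta}_{2,3;o})$ has full column rank $l_2+l_3$; here $\vec{\theta}_{2,3;o}$ denotes the $(2,3)$-component of the model point $\vec{\theta}_0$, whose first component vanishes because $P^k[(W_{\vec{\theta}},V_{\vec{\theta}})]$ lives on the pair family (the $\vec{\theta}_1$-directions leave this family). Since Lemma \ref{6-13-thL} already gives B1 $\Leftrightarrow$ B2, and B2 is precisely the Jacobi matrix condition of D2, it suffices to prove B1 $\Leftrightarrow$ D1. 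First I would rewrite the entries of $A$ through the observed $k$-marginal distribution. The observed generators $g_1,\ldots,g_{l_1+l_2}$ are functions on ${\cal Y}^k$, so
\begin{align*}
\eta_i(\vec{\theta})
= \sum_{\vec{y} \in {\cal Y}^k}
g_i(\vec{y})\, \big(P^k[(W_{\vec{\theta}},V_{\vec{\theta}})]\cdot P_{W_{\vec{\theta}}}\big)(\vec{y}),
\qquad i=1,\ldots,l_1+l_2 .
\end{align*}
Writing $v_j := \frac{\partial}{\partial \theta^j}\big(P^k[(W_{\vec{\theta}},V_{\vec{\theta}})]\cdot P_{W_{\vec{\theta}}}\big)\big|_{\vec{\theta}=\vec{\theta}_0}$ for the tangent vectors appearing in D1 ($j=l_1+1,\ldots,l_1+l_2+l_3$), differentiation shows that the $(i,j)$-entry of $A(\vec{\theta}_{2,3;o})$ equals $\sum_{\vec{y}} g_i(\vec{y})\, v_j(\vec{y})$. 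Hence the $j$-th column of $A$ is exactly the result of pairing $v_j$ against $g_1,\ldots,g_{l_1+l_2}$.

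The key step is to identify the ambient manifold carrying the $v_j$. Because we use the stationary distribution $P_{W_{\vec{\theta}}}$, every observed distribution $P^k[(W_{\vec{\theta}},V_{\vec{\theta}})]\cdot P_{W_{\vec{\theta}}}$ is stationary and therefore satisfies the symmetric condition \eqref{sym} on ${\cal Y}^k$, and its support is the fixed set ${\cal Y}^k_{(W,V)}$. By Lemmas \ref{STR3} and \ref{STR}, the set of such symmetric distributions with support ${\cal Y}^k_{(W,V)}$ coincides with the joint-distribution image of the full exponential family of $k-1$-memory transition matrices on ${\cal Y}$ with that support. Consequently each $v_j$ is a first derivative of $P^k_{W'}$ in the sense of Lemma \ref{STR2}. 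Since by construction $g_1,\ldots,g_{l_1+l_2}$ form a basis of ${\cal G}({\cal Y}^k_{(W,V)})/{\cal N}({\cal Y}^k_{(W,V)})$, Lemma \ref{STR2} gives that the linear map $F^k$ sending a tangent vector $v$ to $\big(\sum_{\vec{y}} g_i(\vec{y})v(\vec{y})\big)_{i=1,\ldots,l_1+l_2}$ is a bijection, in particular injective, on the whole tangent space of this family.

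Finally I would assemble the equivalence. For $\vec{a}=(a^{l_1+1},\ldots,a^{l_1+l_2+l_3})$, the entrywise identity from the first step gives $A(\vec{\theta}_{2,3;o})\,\vec{a} = F^k\big(\sum_j a^j v_j\big)$. Injectivity of $F^k$ then yields $A\vec{a}=0 \Leftrightarrow \sum_j a^j v_j = 0$. Therefore $A$ has full column rank $l_2+l_3$ (condition B1) exactly when $v_{l_1+1},\ldots,v_{l_1+l_2+l_3}$ are linearly independent (condition D1); combining with B1 $\Leftrightarrow$ B2 from Lemma \ref{6-13-thL} gives D1 $\Leftrightarrow$ D2.

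The main obstacle is the middle step: one must justify that the observed tangent vectors $v_j$ genuinely lie in the tangent space of the full $k-1$-memory family on ${\cal Y}$, so that the invertibility of $F^k$ from Lemma \ref{STR2} applies to them. This is precisely where stationarity (the symmetric condition \eqref{sym}) and the parameter-independence of the support ${\cal Y}^k_{(W,V)}$ are needed; everything else is the routine bookkeeping that turns the full-rank statement for $A$ into a linear-independence statement for the $v_j$.
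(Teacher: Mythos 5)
Your proposal is correct and follows essentially the same route as the paper: it expresses the entries of $A(\vec{\theta}_{2,3;o})$ as pairings of the observed generators $g_1,\ldots,g_{l_1+l_2}$ against the tangent vectors $\frac{\partial}{\partial\theta^j}P^k[(W_{\vec{\theta}},V_{\vec{\theta}})]\cdot P_{W_{\vec{\theta}}}$, then invokes Lemmas \ref{STR} and \ref{STR2} (via the spanning property of the observed generators) to conclude that this pairing is injective on the relevant tangent space, so full column rank of $A$ is equivalent to D1, and Lemma \ref{6-13-thL} closes the loop to D2. Your extra paragraph justifying why the derivatives lie in the tangent space of the $k-1$-memory family (stationarity, the symmetric condition \eqref{sym}, and the fixed support ${\cal Y}^k_{(W,V)}$) simply makes explicit what the paper leaves implicit in its citation of those lemmas.
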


\begin{proof}
For $i=1, \ldots, l_1+l_2$ and $j=l_1+1, \ldots, l_1+l_2+l_3$,
\begin{align}
 A(\vec{\theta}_{0})_{i,j}=
\left.\frac{\partial \eta_i(0,\vec{\theta})}{\partial \theta^{j}}
\right|_{\vec{\theta}_{2,3}=\vec{\theta}_{0}}
=&
\left.\frac{\partial }{\partial \theta^{j}}
\sum_{\vec{y} \in {\cal Y}^k}
g_{i}(\vec{y}) P^k[(W_{\vec{\theta}},V_{\vec{\theta}})]
\cdot P_{W_{\vec{\theta}}} (\vec{y}) 
\right|_{\vec{\theta}=\vec{\theta}_{0}}\nonumber \\
=&
\sum_{\vec{y} \in {\cal Y}^k}
g_{i}(\vec{y})\left.\frac{\partial }{\partial \theta^{j}}
 P^k[(W_{\vec{\theta}},V_{\vec{\theta}})]
\cdot P_{W_{\vec{\theta}}} (\vec{y}) 
\right|_{\vec{\theta}=\vec{\theta}_{0}}.
\end{align}
Therefore, since $g_1, \ldots, g_{l_1+l_2}$ span 
the space ${\cal G}({\cal Y}^{k}_{(W,V)})/{\cal N}({\cal Y}^{k}_{(W,V)})$,
due to Lemmas \ref{STR2} and \ref{STR},
the linear independence of the vectors
$(A(\vec{\theta}_{0})_{i,l_1+1})_i, \ldots,(A(\vec{\theta}_{0})_{i,l_1+l_2+l_3})_i$
is equivalent to 
the linear independence of 
the partial derivatives 
$\frac{\partial }{\partial \theta^{l_1+1}}P^k[(W_{\vec{\theta}},V_{\vec{\theta}})]
\cdot P_{W_{\vec{\theta}}}
|_{\vec{\theta}=\vec{\theta}_0}, \ldots$, 
$\frac{\partial }{\partial \theta^{l_1+l_2+l_3}}P^k[(W_{\vec{\theta}},V_{\vec{\theta}})]
\cdot P_{W_{\vec{\theta}}}
|_{\vec{\theta}=\vec{\theta}_0}$. 
Hence, the condition {\bf E1} holds if and only if
the rank of $A(\vec{\theta}_{0})$ is $l_2+l_3$, which is equivalent to the condition {\bf E2}.
\end{proof}

Therefore, when $\Theta$ is an open set, and the map 
$\theta \mapsto P^k[(W_{\vec{\theta}},V_{\vec{\theta}})]\cdot P_{W_{\vec{\theta}}}$
is one-to-one, and the condition in Lemma \ref{L-1-25-Y} holds,
Conditions {\bf C1}, {\bf C2}, and {\bf C3} in Subsection \ref{s4} hold.
Hence, the performance of the estimator given in \eqref{eq6-9-1B} is characterized by Theorem \ref{6-13-th}.
That is, this estimator works well. 
Unfortunately, the condition in Lemma \ref{L-1-25-Y} does not always hold
in Example \ref{ERT2}.
Fortunately, due to Propositions \ref{27-3} and \ref{VDT}, we have the following theorem, which guarantees 
this condition under Example \ref{ERT2} with some natural assumptions.
Then, as the precise statement of Theorem \ref{PT2}, we obtain the following theorem.

\begin{theorem}\Label{T3-29}
Let $\{(W_{\vec{\theta}},V_{\vec{\theta}})\}_{\vec{\theta} \in \bR^{l_2+l_3}}$
be the exponential family of pairs of transition matrices given in Example \ref{ERT2}.
We choose an open subset $\Theta_{2,3}\subset \bR^{l_2+l_3}$ to satisfy the conditions {\bf C1} and {\bf C2} given in 
Section \ref{s4}.
\mh{If} the following conditions hold at any element $\vec{\theta}=\vec{\theta}_{2,3;o} \in  \Theta_{2,3}$,
then the projective Fisher information matrix 
$ \tilde{\mathsf{H}}_{\vec{\theta}_{2,3;o}}^{k|k-1}$ is invertible.
Hence, 
when the true parameter is $\vec{\theta}_{2,3;o} $,
the random variable 
$\hat{\theta}_{2,3}(\vec{Y}_{1,2}^n) - \vec{\theta}_{2,3;o}$
asymptotically obeys
the Gaussian distribution with the covariance matrix 
$\frac{1}{n}
( \tilde{\mathsf{H}}_{\vec{\theta}_{2,3;o}}^{k|k-1})^{-1}$.

\begin{description}
\item[F1]
$\Ker P^{k_{(W,V)}}[(W_{\vec{\theta}},V_{\vec{\theta}})]=\{0\}$ and 
${\cal V}^{k_{P,(W_{\vec{\theta}},V_{\vec{\theta}})}}(P)={\cal V}_{{\cal X}}$.

\item[F2]
All the vectors $V_{\vec{\theta},*,x}$ are different.

\item[F3]
The inequality $k \ge k_{(W_{\vec{\theta}},V_{\vec{\theta}})}
+k_{(P_{(W_{\vec{\theta}},V_{\vec{\theta}})},(W_{\vec{\theta}},V_{\vec{\theta}}))}+1$ holds.

\end{description}
\end{theorem}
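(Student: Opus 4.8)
The plan is to reduce the assertion to Theorem \ref{6-13-th}, whose conclusion is exactly the claimed asymptotic Gaussianity with covariance $\frac{1}{n}\tilde{\mathsf{H}}_{\vec{\theta}_{2,3;o}}^{-1}$, together with the invertibility of $\tilde{\mathsf{H}}_{\vec{\theta}_{2,3;o}}$ supplied by the equivalence of B1 and B2 in Lemma \ref{6-13-thL}. Theorem \ref{6-13-th} requires Conditions C1, C2, and C3. Since C1 and C2 are imposed as hypotheses on $\Theta_{2,3}$, the entire content of the proof is the verification of C3, i.e.\ that the Jacobi matrix $A(\vec{\theta}_{2,3;o})$ has full rank $l_2+l_3$ at every $\vec{\theta}_{2,3;o}\in\Theta_{2,3}$. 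By Lemma \ref{L-1-25-Y}, this rank condition (condition D2) is equivalent to condition D1, the linear independence of the partial derivatives $\frac{\partial}{\partial\theta^j}P^k[(W_{\vec{\theta}},V_{\vec{\theta}})]\cdot P_{W_{\vec{\theta}}}\big|_{\vec{\theta}=\vec{\theta}_{2,3;o}}$ for $j=l_1+1,\ldots,l_1+l_2+l_3$. So it suffices to establish D1 pointwise under E1, E2, and E3.

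To establish D1 at a fixed $\vec{\theta}_{2,3;o}$, I would re-base the exponential family at the pair $(W_o,V_o):=(W_{\vec{\theta}_{2,3;o}},V_{\vec{\theta}_{2,3;o}})$, which is legitimate because re-basing an exponential family at one of its own points only shifts the natural parameter and leaves the generators, viewed as tangent vectors, unchanged modulo ${\cal N}^I_{(W_o,V_o)}$. This lets me invoke Propositions \ref{27-3} and \ref{VDT}, which are stated relative to a generic base pair, at $(W_o,V_o)$. Suppose $\sum_{j}a^j\frac{\partial}{\partial\theta^j}P^k\cdot P_{W_{\vec{\theta}}}|_{\vec{\theta}=\vec{\theta}_{2,3;o}}=0$ for some $\vec{a}$. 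Condition E3 guarantees $k\ge k_{(W_o,V_o)}+k_{(P_{(W_o,V_o)},(W_o,V_o))}+1$, so condition (4) of Proposition \ref{27-3} is in force and hence its condition (1) holds: the combination $\sum_j a^j \hat{g}_j$ lies in ${\cal N}^I_{2,(W_o,V_o)}+{\cal N}^I_{P_{W_o},(W_o,V_o)}$.

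It then remains to kill this membership. Conditions E1 and E2 are precisely the hypotheses of Proposition \ref{VDT} at $(W_o,V_o)$, which yields $\dim{\cal L}^I_{2,W_o,V_o}=\dim{\cal L}^I_{P,W_o,V_o}=0$. Because the scaling map $(W_o,V_o)_*$ is invertible on the support, the preimages ${\cal N}^I_{2,(W_o,V_o)}=(W_o,V_o)_*^{-1}{\cal L}^I_{2,W_o,V_o}$ and ${\cal N}^I_{P_{W_o},(W_o,V_o)}$ are likewise zero-dimensional, so their sum is $\{0\}$ inside ${\cal G}_{(W_o,V_o)}$. Working with the ${\cal G}_{1,(W_o,V_o)}$-representatives $\tilde{g}_j$ of $\hat{g}_j$ (a replacement that affects neither the derivatives of $P^k$ nor the relevant classes), membership in this trivial sum forces $\sum_j a^j\tilde{g}_j=0$; since the $\tilde{g}_j$ represent the linearly independent classes of the $\hat{g}_j$ in ${\cal G}_{(W_o,V_o)}/{\cal N}^I_{(W_o,V_o)}$, they are themselves linearly independent, whence $\vec{a}=0$. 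This is exactly D1, so C3 holds at $\vec{\theta}_{2,3;o}$; as the point was arbitrary, C3 holds on all of $\Theta_{2,3}$. Feeding C1, C2, and C3 into Theorem \ref{6-13-th}, and reading invertibility of $\tilde{\mathsf{H}}_{\vec{\theta}_{2,3;o}}$ off Lemma \ref{6-13-thL}, completes the argument.

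The hard part will be the re-basing step and the bookkeeping attached to it: the integers $k_{(W_o,V_o)}$ and $k_{(P_{(W_o,V_o)},(W_o,V_o))}$ depend on the point $\vec{\theta}_{2,3;o}$, so one must check that E3 is the correct pointwise threshold at each base point and that Propositions \ref{27-3} and \ref{VDT} transfer verbatim after re-basing. A subsidiary care point is that the generators of Example \ref{ERT} are chosen in ${\cal G}_{(W,V)}$ rather than in ${\cal G}_{1,(W,V)}$, so I would first pass to their ${\cal G}_{1,(W_o,V_o)}$-representatives before invoking Proposition \ref{27-3}, verifying that this replacement preserves both the partial derivatives of $P^k$ and the quotient classes used to conclude $\vec{a}=0$.
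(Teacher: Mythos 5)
Your proposal is correct and follows essentially the same route as the paper's proof: Conditions E1 and E2 feed into Proposition \ref{VDT} to make ${\cal L}^I_{2,W,V}$ and ${\cal L}^I_{P,W,V}$ trivial, E3 activates Proposition \ref{27-3} to translate vanishing derivatives of $P^k\cdot P_{W_{\vec{\theta}}}$ into membership in those trivial spaces, which yields condition D1 of Lemma \ref{L-1-25-Y}, hence C3, and then Theorem \ref{6-13-th} and Lemma \ref{6-13-thL} finish the argument. The only difference is that you make explicit the re-basing at $\vec{\theta}_{2,3;o}$ and the passage to ${\cal G}_{1,(W_o,V_o)}$-representatives, bookkeeping the paper leaves implicit.
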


\begin{proof}
Proposition \ref{VDT} and Conditions {\bf F1} and {\bf F2} guarantee the relations
$ \dim {\cal L}_{2,W,V}^I=\dim {\cal L}_{P,W,V}^I=0$.
Hence, $g_{l_1+1}, \ldots, g_{l_1+l_2+l_3}$ are linearly independent at 
any element $\vec{\theta}_{2,3;o}\in  \Theta_{2,3}$
in the sense of the quotient space
${\cal G}_{(W,V)}/
({\cal N}_{(W,V)}
+{\cal N}_{P_{W},(W,V)}
+{\cal N}_{2,(W,V)})$.
The combination of this fact, the condition {\bf F3}, and Proposition \ref{27-3} implies the condition {\bf E1} of Lemma \ref{L-1-25-Y}
at any element $\vec{\theta}_{2,3;o}\in  \Theta_{2,3}$.
Hence Lemmas \ref{6-13-thL} and \ref{L-1-25-Y} 
show that the projective Fisher information $ 
 \tilde{\mathsf{H}}_{\vec{\theta}_{2,3;o}}^{k|k-1}$
is invertible
at any element $\vec{\theta}_{2,3;o}\in  \Theta_{2,3}$.
Due to Theorem \ref{6-13-th},
the random variable 
$\hat{\theta}_{2,3}(\vec{Y}_{1,2}^n) - \vec{\theta}_{2,3;o}$
asymptotically obeys
the zero-mean Gaussian distribution of covariance matrix 
$\frac{1}{n}
( \tilde{\mathsf{H}}_{\vec{\theta}_{2,3;o}}^{k|k-1})^{-1}$
when the true parameter is $\vec{\theta}_{2,3;o} $.
\end{proof}

Now, we freely choose the transition matrix $W$ on ${\cal X}$ and the transition matrix $V$ from ${\cal X}$ to ${\cal Y}$.
Since Conditions {\bf F2} and {\bf F3} hold almost everywhere, 
due to Proposition \ref{2-15-B},
all the Conditions {\bf F1}, {\bf F2}, and {\bf F3} hold almost everywhere.
Hence, when our model is given by the exponential family given in Example \ref{ERT2},
the estimator 
$\hat{\theta}_{2,3}(\vec{Y}_{1,2}^n) $ given in \eqref{eq6-9-1B}
estimates the true parameter except for measure-zero sets.

\subsection{Special cases}
Although we have discussed how to apply our result in the special case in Example \ref{ERT2},
we analyze the more details in the more special case, which might be helpful for the readers to understand the mathematical structure discussed in this paper.  
In this subsection, we employ the parametrization given in Examples \ref{ERT} and \ref{ERT2}.

\subsubsection{$d=2$ and $d_Y=2$}
When $d_Y=2$, we choose $W$ and $V$ as
$W=V=
\left(
\begin{array}{cc}
\frac{1}{2} & \frac{1}{2} \\
\frac{1}{2} & \frac{1}{2} 
\end{array}
\right)$.
As discussed in \cite[]{LEH}, the subset of singular elements equals the set of non-memory cases, 
which can be characterized as $\theta^2=0$ or $\theta^3=\theta^4=0$.
This case is simply described by a binomial distribution and denoted by the model $\theta^{1\times 2}$. 
Hence, the set of non-singular elements are given as
$\bR \times (\bR\setminus \{0\})\times (\bR^2\setminus \{(0,0)\})$,
which can be divided into two connected components
$\theta^{2\times 2}:=\bR \times (0,\infty)\times (\bR^2\setminus \{(0,0)\})$ 
and
$\bR \times (-\infty,0)\times (\bR^2\setminus \{(0,0)\})$.
Each connected component has a one-to-one correspondence to 
non-singular elements divided by the equivalence class.

When we have a possibility for two models $\theta^{2\times 2}$ and $\theta^{1\times 2}$,
we need to choose one of the two models.
For this aim, employing Theorem \ref{6-13-thB},
we apply the $\chi^2$-test for this choice with a certain significance level $\alpha>0$.
That is, when the smaller model $\theta^{1\times 2}$ passes the $\chi^2$-test we adopt it.
Otherwise, we adopt the larger one.

\subsubsection{$d=2$ and $d_Y \ge 3$}
When $d_Y\ge 3$, we choose $W$ and $V$ as
$W=
\left(
\begin{array}{cc}
\frac{1}{2} & \frac{1}{2} \\
\frac{1}{2} & \frac{1}{2} 
\end{array}
\right),\quad
V=
\left(
\begin{array}{cc}
\frac{1}{d_Y} & \frac{1}{d_Y} \\
\frac{1}{d_Y} & \frac{1}{d_Y} \\
\vdots & \vdots \\
\frac{1}{d_Y} & \frac{1}{d_Y} 
\end{array}
\right)$.
As discussed in \cite[]{LEH}, the subset of singular elements equals the set of non-memory cases, 
which can be characterized as $\theta^{d_Y}=\cdots =\theta^{2d_y-2}=0$ or 
$\theta^{2d_Y-1}=\theta^{2d_Y}=0$.
We denote this model by $\theta^{1 \times d_Y}$.
Let $\theta^{NM}$ be the set 
$(\bR^{d_Y-1} \times \{(0,\ldots,0)\}\times \bR^{2})
\cup (\bR^{2d_Y-2} \times \{(0,0)\})$.
Then, the set $\bR^{2 d_Y} \setminus \theta^{NM}
=\bR^{d_Y-1}\times (\bR^{d_Y-1}\setminus \{(0,\ldots,0)\})
\times (\bR^{2}\setminus \{(0,0)\})$ 
equals to the set of non-singular elements.
However, it is impossible to divide the set $\bR^{2 d_Y} \setminus \theta^{NM}$
into components satisfying the following conditions.
(1) Each component is an open set.
(2) Each component gives a one-to-one parametrization for non-singular elements.
This is because the set $\bR^{2 d_Y} \setminus \theta^{NM}$ is connected.
We denote this model by $\theta^{2 \times d_Y}$.

Hence, we need to adopt a duplicated parametrization because the parametric space is needed to be open due to the assumption of Theorem \ref{6-13-th}.
Adopting this parametrization,
we apply the em-algorithm to find the estimate $\hat{\theta}_{2,3}(\vec{Y}_{1,2}^n)$ given in \eqref{eq6-9-1B}.
In this case, we find one parameter $\hat{\theta}_{2,3}(\vec{Y}_{1,2}^n)$ of parameters to attain the minimum \eqref{eq6-9-1B}.
We measure the error by the difference between the estimate 
$\hat{\theta}_{2,3}(\vec{Y}_{1,2}^n)$ and the parameter that is close to the estimate among parameters to generate the true process.
In this case, the asymptotic behavior is characterized by Theorem \ref{6-13-th}.
Further, when we have a possibility for two models $\theta^{2\times d_Y}$ and $\theta^{1\times d_Y}$,
we can employ the $\chi^2$-test for the model selection.

\subsubsection{$d \ge 3$}
When $d\ge 3$, it is not so easy to characterize all the singular points.
Indeed, given $d_Y$, it is not trivial to identify the number $d$ of hidden states.
When $d$ is fixed, due to Proposition \ref{AGH}, 
the model given in Examples \ref{ERT} 
covers all the transition matrices of $d$ hidden states with full support.
We denote this model by $\theta^{d\times d_Y}$. 
Similar to the above case, we employ the parametric space $\bR^{d(d+d_Y-2)}$,
and use the estimate $\hat{\theta}_{2,3}(\vec{Y}_{1,2}^n)$ by using the em-algorithm.
It is not clear how large the error of the estimation is
when the true parameter is a singular point.
But, if the pair of transition matrices parametrized by the estimate is close to the true one,
this method works well.

When the number $d$ of hidden states is unknown, we need to employ the model selection.
In this case, 
we apply the $\chi^2$-test for each model $\theta^{d\times d_Y}$ with a certain significance level $\alpha>0$  in the sense of Theorem \ref{6-13-thB}.
Then, we adopt the model with the minimum $d$ among passing models. 
Indeed, when the true parameter belongs to the set of singular points except for the submodels,
Theorem \ref{6-13-thB} cannot be applied.
Hence, we cannot guarantee the quality of this model selection in this exceptional case.

\section{Conclusion}\Label{s9}
We have formulated estimation of hidden Markov process 
by using the information geometrical structure 
(e.g., the exponential family, the natural parameter, the expectation parameter, 
divergence, projective Fisher information matrix, 
the Pythagorean theorem, and the em-algorithm) of transition matrices.
Since this geometrical structure does not change according to the number $n$ of observations,
the calculation complexity of our em-algorithm
does not depend on the number $n$ of observations.
We have also derived the asymptotic evaluation of the error of our estimator.

For this discussion, we have first formulated the partial observation model of the
Markovian process.
Under this model, we have formulated an estimator by using the em-algorithm based on 
the geometry of transition matrices.
Then,  we have asymptotically evaluated the error of the estimator
by using the projective Fisher information.
To apply these results to the estimation of the transition matrix of the hidden Markovian process, 
we have employed the result for the equivalence problem in terms of the tangent space by another paper \cite{LEH}.
Then, we have discussed the application of the results of a partial observation model
to an exponential family of pairs of transition matrices.
In particular, we have given a concrete parametrization of this application in a typical case as Example \ref{ERT2}.

In summary, once we obtain the sample mean of the generators $g_1, \ldots, g_{l_1+l_2}$ given in Example \ref{ERT2},
we can calculate the estimator $\hat{\theta}_{2,3}(\vec{Y}_{1,2}^n) $ given in \eqref{eq6-9-1B},
whose calculation complexity does not depend on $n$ and depends on $k$.
Since the estimation error is asymptotically characterized by 
the projective Fisher information matrix $ 
 \tilde{\mathsf{H}}_{\vec{\theta}_{2,3;o}}^{k|k-1}$,
it is sufficient to choose $k$ such that the projective Fisher information matrix 
$ \tilde{\mathsf{H}}_{\vec{\theta}_{2,3;o}}^{k|k-1}$ is not so small.
That is, we do not need to increase $k$ as $n$ increases.
At least, Theorem \ref{T3-29} guarantees that 
the matrix $ \tilde{\mathsf{H}}_{\vec{\theta}_{2,3;o}}^{k|k-1}$
is invertible when $k$ is sufficiently large.
Hence, when we apply our method $\hat{\theta}_{2,3}(\vec{Y}_{1,2}^n) $ given in \eqref{eq6-9-1B}
with the parametrization given in Example \ref{ERT2},
the calculation complexity is $O(1)$
once we obtain the averages $Y_i^n:=\sum_{j=0}^{n-1} \frac{1}{n}g_i(Y_{i+1}, \ldots,Y_{i+k}) $ with $i=1, \ldots, l_1+l_2$.
This method employs an optimization 
over the probability distribution on the state space whose size depends 
only on $k$ and the sizes $d$ and $d_Y$ of the state spaces ${\cal X}$ and ${\cal Y}$.

\mh{Therefore, we can compare our method
with the existing methods, e.g., Baum-Welch algorithm \cite{Baum,KGMS} and the 
conventional em-algorithm \cite{Amari-em,FN,KGMS} as follows.
Both methods contain the optimization based on several steps of iterations.
In the existing methods, each iteration needs
space complexity $O(n )$ and time complexity $O(n )$.
Also, the required number of iterations for the convergence depends on $n$,
and the evaluation of its dependence is a difficult problem and has been studied actively \cite{KGMS}.
In contrast, 
in our method, each iterations needs only space complexity $O(1 )$ and time complexity $O(1 )$
because our optimization problem does not change dependently of the number $n$ of samples.
Also, the required number of iterations for the convergence does not depend on $n$.
In fact, while these numbers depend on $k$ and $k$ depends on $d$ and $d_Y$, 
$k$ is a constant number and is independent of $n$.
Although these existing methods do not need the preparation stage,
as the preparation for the optimization,
our method needs to calculate the sample mean of the generators $g_1, \ldots, g_{l_1+l_2}$.
This preparation stage
requires the calculation complexity $O(n)$, but its space complexity is $O(\log n)$
because we only need to keep the histogram of past observations.
We do not need to repeat this preparation step.
That is, our method has an advantage for space complexity and time complexity
as summarized in Table \ref{hikaku1}.}

\begin{table}[t]
\caption{Comparison between our method and existing method}
\label{hikaku1}
\begin{center}
\begin{tabular}{|c|c|c|c|c|c|}
\hline
&Time & Space & Number &Time & Space \\
&complexity & complexity &of  &complexity  &  complexity   \\
\cline{2-3}\cline{5-6}
& \multicolumn{2}{|c|}{of each iteration}&iteration & \multicolumn{2}{|c|}{of non-iterative part}  \\
\hline
Existing method &\multirow{2}{*}{$O(n)$}&\multirow{2}{*}{$O(n)$}&\multirow{2}{*}{ \# }  & \multirow{2}{*}{$O(1)$}& \multirow{2}{*}{$O(1)$}     \\
\cite{Baum,Amari-em,FN,KGMS} & & & &&\\
\hline 
Our method & $O(1)$&$O(1)$&$O(1)$&$O(n)$&$O(\log n)$ \\
\hline
\end{tabular}
\end{center}
\#:  
The required number of iterations depends on $n$.
Its evaluation is very complicated and is actively studied.
It is one of the main issues in the study of 
conventional em-algorithm \cite[Section 2]{KGMS}.
\end{table}

\if0
That is, our optimization problem is not changed even when $n$ increases.
Hence, in our method, the calculation of the averages $Y_i^n$ 
requires the calculation complexity $O(n)$, but its space complexity is $O(\log n)$
because we only need to keep the histogram of past observations.
In contrast, the existing methods, e.g., Baum-Welch algorithm \cite{Baum,KGMS} and the 
conventional em-algorithm \cite{Amari-em,FN,KGMS} employ
many rounds of iterations until the convergence, 
and each iteration needs
space complexity $O(n )$ and time complexity $O(n )$.
\fi

\mh{Although the above discussion shows the advantage of our method over the existing methods,
to clarify the advantage more explicitly,
it is needed to numerically compare our method with the existing methods.
For this comparison, we need to implement the proposed algorithm,
which needs various knowledges of numerical optimizations.
Since these optimizations require
special skills for numerical calculation,
this implementation and the above numerical comparison
are beyond the scope of this paper, and should be done as an independent research.
These numerical analyses are very important future studies.}

Here, we need to mention that there are several singular points in this model.
Our asymptotic evaluation of the estimation error does not work when the true parameter is close to the singular point.
In this situation, the set of singular points is considered as another model.
Hence, it is needed to apply model selection.
Although there are several other methods to select our model, e.g., AIC and BIC, (MDL), 
these usually assume that there is no singularity.
In this paper, to discuss the model selection even with singularity, 
we propose to use the $\chi^2$-test as Theorem \ref{6-13-thB}.
However, to apply our method, we need to classify the singular points in the model of hidden Markov process.
Since this problem is too difficult, we could not discuss this problem. 
This is an interesting future problem.

\section*{Acknowledgment}
The author is very grateful to 
Professor Takafumi Kanamori, Professor Vincent Y. F. Tan, and Dr. Wataru Kumagai
for helpful discussions and comments.
The works reported here were supported in part by 
Guangdong Provincial Key Laboratory (Grant No. 2019B121203002),
the JSPS Grant-in-Aid for Scientific Research 
(B) No. 16KT0017, (A) No.17H01280, 
the Okawa Research Grant
and Kayamori Foundation of Informational Science Advancement.

\appendix

\section{Implementation of em-algorithm}\Label{ASImp}
It is not trivial to implement E-step and M-step because it is not easy to calculate the derivatives of the Perron-Frobenius eigenvalue.
Although the implementation of the em-algorithm based on Bregman divergence and its related algorithm
were discussed in \cite{FN,MTKE},
we discuss this problem in a different way.
Fortunately, we can avoid the calculations of the derivatives as follows.
In the M-step, we find $\vec{\theta}_{2,3;j}$ from $\vec{\eta}_{3;j}$ as follows.
Using the formula \eqref{eq1-28-2}, we calculate $\vec{\theta}(\vec{Y}_{1,2}^n,\vec{\eta}_{3;j})$.
Then, by using \eqref{1-1}, 
$\vec{\theta}_{2,3;j}$ is given as
\begin{align}
\argmin_{\vec{\theta}}
\phi(\vec{\theta})
-\sum_{i=l_1+1}^{l_1+l_2} 
\theta^{i}
(\vec{Y}_{1,2}^n)_i
-
\sum_{i=l_1+l_2+1}^{l_1+l_2+l_3} 
\theta^{i}
(\vec{\eta}_{3;j})_i.
\end{align}
These calculations can be done by 
derivative-free optimization algorithms \cite{CSV,MKK} represented by Nelder-Mead method \cite{NM}.
A derivative-free optimization algorithm maximizes a concave function without calculating the derivative only with calculating the outcomes with several inputs.

In the E-step, we find $\vec{\eta}_{3;j}$ from $\vec{\theta}_{3;j-1}$ as follows.
By using the formula \eqref{5-28-1} and \eqref{eq1-28-1}, 
$\vec{\eta}_{3;j}$ is given as
\begin{align}
\argmin_{\vec{\eta}_3'}
\max_{\vec{\theta}}
\sum_{i=l_1+1}^{l_1+l_2} 
(\vec{\theta} - \vec{\theta}_{2,3;j-1})^i (\vec{Y}_{1,2}^n)_i
+
\sum_{i=l_1+l_2+1}^{l_1+l_2+l_3} 
(\vec{\theta} - \vec{\theta}_{2,3;j-1})^i (\vec{\eta}_3')_i
-\phi(\vec{\theta}).
\end{align}
Since $(\vec{\theta} - \vec{\theta}_{2,3;j-1})^i (\vec{Y}_{1,2}^n)_i
+\sum_{i=l_1+l_2+1}^{l_1+l_2+l_3} 
(\vec{\theta} - \vec{\theta}_{2,3;j-1})^i (\vec{\eta}_3')_i
-\phi(\vec{\theta})$
is concave for $\vec{\theta} $ and is linear for $\vec{\eta}_3'$,
using the minimax theorem \cite[Chap. VI Prop. 2.3]{ET}, we have
\begin{align}
&\min_{\vec{\eta}_3'}
\max_{\vec{\theta}}
\sum_{i=l_1+1}^{l_1+l_2} 
(\vec{\theta} - \vec{\theta}_{2,3;j-1})^i (\vec{Y}_{1,2}^n)_i
+
\sum_{i=l_1+l_2+1}^{l_1+l_2+l_3} 
(\vec{\theta} - \vec{\theta}_{2,3;j-1})^i (\vec{\eta}_3')_i
-\phi(\vec{\theta}) \nonumber \\
=&\max_{\vec{\theta}}
\min_{\vec{\eta}_3'}
\sum_{i=l_1+1}^{l_1+l_2} 
(\vec{\theta} - \vec{\theta}_{2,3;j-1})^i (\vec{Y}_{1,2}^n)_i
+
\sum_{i=l_1+l_2+1}^{l_1+l_2+l_3} 
(\vec{\theta} - \vec{\theta}_{2,3;j-1})^i (\vec{\eta}_3')_i
-\phi(\vec{\theta}) \nonumber \\
=&
\max_{\vec{\theta}: \vec{\theta}^i = \vec{\theta}_{2,3;j-1}^i (i \ge l_1+ l_2+1)}
\sum_{i=l_1+1}^{l_1+l_2} 
(\vec{\theta} - \vec{\theta}_{2,3;j-1})^i (\vec{Y}_{1,2}^n)_i
-\phi(\vec{\theta}) .\Label{eq1-28-5}
\end{align}
The RHS of \eqref{eq1-28-5} can be easily calculated.
However, the maximization of the RHS of \eqref{eq1-28-5} cannot directly give the value of $\vec{\eta}_{3,j} $.
To calculate this value, we calculate 
\begin{align}
\vec{\theta}_{j}:=
\argmax_{\vec{\theta}: \vec{\theta}^i = \vec{\theta}_{2,3;j-1}^i(i \ge l_1+ l_2+1)}
\sum_{i=l_1+1}^{l_1+l_2} 
(\vec{\theta} - \vec{\theta}_{2,3;j-1})^i (\vec{Y}_{1,2}^n)_i
-\phi(\vec{\theta}) .
\end{align}
So, we have
$(\vec{Y}_{1,2}^n ,\vec{\eta}_{3;j})_i
=\frac{\partial \phi}{\partial \theta^i}(\vec{\theta}_{j})$.
Fortunately, we can avoid to calculate the derivatives as follows.
Since the stationary distribution is given as the Perron-Frobenius eigenvector,
this value can be calculated as the expectation of $g_j$ under the stationary distribution.

\section{Proof of Lemma \protect{\ref{L9-29}}}\Label{AS2}
\noindent{\it Step 1:\quad}
We will show that
\begin{align}
{\cal V}_{{\cal X}}= {\cal N}_W:=
\{ (W_* g)^T| u_{{\cal X}}\rangle | g \in {\cal N}({\cal X}^{2}_W) \}.\Label{9-29-1}
\end{align}
For this purpose, we will show that 
the function $|f\rangle- \langle f| P_W\rangle |u_{{\cal X}}\rangle$ 
belongs to the RHS of \eqref{9-29-1} for any function $f$.
Since 
\begin{align*}
&( ( {W}_* (- |f\rangle \langle u_{{\cal X}}| +|u_{{\cal X}}\rangle \langle f|  )^T 
|u_{{\cal X}}\rangle)_{x'}\nonumber \\
=&\sum_{x}W(x|x')(-f(x)+f(x')) 
=f(x')-\sum_{x}f(x)W(x|x')
=f(x')-(W^T |f\rangle)_{x'},
\end{align*}
the vector $ |f\rangle- W^T |f\rangle$ belongs to the set ${\cal N}_{W}$.
So, 
$ |f\rangle- W^T| f\rangle +W^T| f\rangle- W^T W^T |f\rangle 
=| f\rangle-W^T W^T |f\rangle$ belongs to the set ${\cal N}_{W}$.
Repeating this procedure, we see that
$|f\rangle - (W^T)^n |f\rangle$ belongs to the set ${\cal N}_{W}$.
Since $\lim_{n \to \infty}\frac{1}{n}\sum_{i=1}^n \sum_{x}f(x)W^i(x|x')= \sum_{x}f(x)P_{ W}(x) 
=\langle f| P_{ W} \rangle $ for any $x' \in {\cal X}$,
we have $\lim_{n \to \infty }\frac{1}{n}\sum_{i=1}^n
(|f\rangle - (W^T)^i |f\rangle)
=|f\rangle- \langle f| P_{ W}\rangle |u_{{\cal X}}\rangle$, i.e., 
$|f\rangle- \langle f| P_{ W}\rangle |u_{{\cal X}}\rangle$ belongs to the set ${\cal N}_{W}$.
Since 
\begin{align}
(( {W}_*  |u_{{\cal X}}\rangle \langle  u_{{\cal X}}| )^T| u_{{\cal X}}\rangle)_{x'}
 = \sum_{x}W(x|x')=1= |u_{{\cal X}}\rangle_{x'},
\end{align}
$ |u_{{\cal X}}\rangle $ belongs to the set ${\cal N}_{W}$.
Thus, any function $f$ belongs to the set ${\cal N}_{W}$, which implies 
\eqref{9-29-1}.

\noindent{\it Step 2:\quad}
It is sufficient to show that 
for $g' \in {\cal G}( {\cal X}^{2}_{W})$, there exists 
$g \in{\cal G}_{1,W}( {\cal X}^{2}_{W})$ such that 
$[g']=[g]$.
Due to \eqref{9-29-1}, we can choose an element $g'' \in {\cal N}( {\cal X}^{2}_{W})$
such that $(W_* g' )^T| u_{{\cal X}}\rangle  =( W_* g'')^T| u_{{\cal X}}\rangle $.
Hence, $g:= g'-g''$ belongs to ${\cal G}_{1,W}( {\cal X}^{2}_{W})$
because 
\begin{align*}
 W_* g^T | u_{{\cal X}}\rangle
= (W_* g')^T | u_{{\cal X}}\rangle
- (W_* g'')^T | u_{{\cal X}}\rangle 
=0.
\end{align*}
Since $g'' \in {\cal N}( {\cal X}^{2}_{W})$, 
we find that $[g']=[g]$.
\endproof

\section{Proofs of statements in Section \ref{s4}}\Label{s4-3}
To show Lemma \ref{6-13-thL}, we prepare the following lemma.
\begin{lemma}\Label{6-13-thC}
Let $K$ be a strictly positive definite matrix on ${\cal V}_{1,2,3}$, i.e., $K$ does not have zero eigenvalue.
Then, $P_{1,2} K P_{1,2}^T 
- (P_{1,2} K P_3^T)(P_3 KP_3^T)^{-1}(P_3 KP_{1,2}^T)$ 
is a strictly positive definite matrix on ${\cal V}_{1,2}$.
\end{lemma}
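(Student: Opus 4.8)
The plan is to recognize the displayed matrix as the Schur complement of the block $P_3 K P_3$ in $K$ with respect to the decomposition $\mathcal{V}_0 = \mathcal{V}_{1,2}\oplus \mathcal{V}_3$, and to prove positivity by a completing-the-square argument. I would first record that $(P_3 K P_3)_3$ is indeed invertible: for any nonzero $y\in\mathcal{V}_3$ one has $\langle y|P_3KP_3|y\rangle=\langle y|K|y\rangle>0$ because $K$ is strictly positive definite, so the restriction of $P_3KP_3$ to $\mathcal{V}_3$ is strictly positive definite and hence its inverse $(P_3KP_3)_3^{-1}$ exists, making the expression well defined. I also use throughout that $K$ is symmetric, so that $(P_3KP_{1,2})^T=P_{1,2}KP_3$.

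For the core step, fix a nonzero $v\in\mathcal{V}_{1,2}$ and consider the map $y\mapsto \langle v+y|K|v+y\rangle$ on $\mathcal{V}_3$. Expanding and using $P_{1,2}v=v$, $P_3 y=y$ gives
\[
\langle v|K|v\rangle+2\langle y|P_3KP_{1,2}|v\rangle+\langle y|P_3KP_3|y\rangle,
\]
a strictly convex quadratic in $y$ whose unique minimizer is $y_*=-(P_3KP_3)_3^{-1}P_3KP_{1,2}v$. Substituting $y_*$ back and simplifying (using symmetry of $K$) collapses the value to exactly
\[
\langle v|\bigl(P_{1,2}KP_{1,2}-(P_{1,2}KP_3)(P_3KP_3)_3^{-1}(P_3KP_{1,2})\bigr)|v\rangle,
\]
i.e. the quadratic form of the matrix in the statement evaluated at $v$.

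Finally, since $v\in\mathcal{V}_{1,2}$ and $y_*\in\mathcal{V}_3$ lie in complementary subspaces, $v+y_*\neq 0$ whenever $v\neq 0$, so strict positive-definiteness of $K$ forces the minimum value $\langle v+y_*|K|v+y_*\rangle$ to be strictly positive. As this is precisely the quadratic form of the Schur complement at $v$, the latter is strictly positive definite on $\mathcal{V}_{1,2}$, which is the claim. I do not expect a genuine obstacle here, since this is the standard Schur-complement lemma; the only points needing care are the invertibility of the $\mathcal{V}_3$-block (so the formula even makes sense) and the bookkeeping of the projections — in particular that $P_3KP_{1,2}$ sends $\mathcal{V}_{1,2}$ into $\mathcal{V}_3$ and that $(P_3KP_{1,2})^T=P_{1,2}KP_3$ by symmetry of $K$.
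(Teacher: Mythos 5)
Your proof is correct, and it takes a slightly different route from the paper's. The paper argues via the Cauchy--Schwarz inequality for the bilinear form induced by $K$: it writes down
$\langle \vec{u}_{1,2}|P_{1,2}KP_{1,2}|\vec{u}_{1,2}\rangle\,\langle \vec{u}_3|P_3KP_3|\vec{u}_3\rangle > \langle \vec{u}_3|P_3KP_{1,2}|\vec{u}_{1,2}\rangle^2$
and then substitutes the particular choice $|\vec{u}_3\rangle=(P_3KP_3)_3^{-1}P_3KP_{1,2}|\vec{u}_{1,2}\rangle$ to isolate the Schur complement. You instead use the variational (completing-the-square) characterization: the Schur complement's quadratic form at $v$ is the minimum of $\langle v+y|K|v+y\rangle$ over $y\in\mathcal{V}_3$, attained at $y_*=-(P_3KP_3)_3^{-1}P_3KP_{1,2}v$, and positivity follows because $v+y_*\neq 0$. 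The two arguments pivot on the same auxiliary vector, so they are close cousins, but yours has two small advantages: you explicitly justify the invertibility of $(P_3KP_3)_3$ (which the paper uses without comment), and your argument covers the degenerate case $P_3KP_{1,2}v=0$ uniformly, whereas the paper's strict inequality as written requires the chosen $\vec{u}_3$ to be nonzero and so that case needs a separate (trivial) remark. The paper's version is marginally shorter; yours is marginally more airtight. Either is acceptable.
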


\begin{proofof}{Lemma \ref{6-13-thC}}
Let $\vec{u}_{1,2}$ and $\vec{u}_3$  be arbitrary vectors in 
${\cal V}_{1,2}$ and ${\cal V}_3$, respectively.
Schwartz inequality guarantees that
\begin{align}
\langle \vec{u}_{1,2}|  P_{1,2} K P_{1,2}^T | \vec{u}_{1,2}\rangle 
\langle \vec{u}_3 | P_3 K P_3^T | \vec{u}_3)\rangle
> 
\langle\vec{u}_3 |  P_3 K P_{1,2}^T | \vec{u}_{1,2}\rangle^2.
\end{align}
Choosing $|\vec{u}_3\rangle:= (P_3 K P_3^T)^{-1} P_3 K P_{1,2}^T | \vec{u}_{1,2}\rangle$, 
we have
\begin{align}
\langle \vec{u}_{1,2} | P_{1,2} K P_{1,2}^T | \vec{u}_{1,2}\rangle 
> 
\langle \vec{u}_{1,2}| (P_{1,2} K P_3^T)(P_3 KP_3^T)^{-1}
(P_3KP_{1,2}^T) 
|\vec{u}_{1,2}\rangle .
\end{align}
\end{proofof}

\begin{proofsof}{Lemma \ref{6-13-thL} and Theorem \ref{6-13-th}}

\noindent{\it Step 1 (Preparation):}\quad
In the following, we assume that
the matrix $\mathsf{H}_{0,\vec{\theta}_{2,3;o}}[\phi]$ is invertible.
Otherwise, we remove linear dependent generators among $g_1, \ldots, g_{l_1}$.
Notice that the linearly independence of $g_{l_1+1}, \ldots, g_{l_1+l_2+l_3}$ is guaranteed by the assumption C3.
Even if we make this change, 
the projective Fisher information matrix $\tilde{\mathsf{H}}_{\vec{\theta}_{2,3;o}} $ 
nor the estimator does not change.
So, we define the matrix $\mathsf{J}:=\mathsf{H}_{0,\vec{\theta}_{2,3;o}}[\phi]^{-1}$ 
on the whole space ${\cal V}_{1,2,3}$, and
the matrix $C$ as the square root of  
$P_{1,2} \mathsf{J} P_{1,2}^T -(P_{1,2} \mathsf{J} P_3^T) 
(P_3 \mathsf{J} P_3^T)^{-1} (P_3 \mathsf{J} P_{1,2}^T)$ 
on the subspace ${\cal V}_{1,2}$.
Lemma \ref{6-13-thC} guarantees that $C$ is a full rank matrix on ${\cal V}_{1,2}$.
Also, we define the $(l_1+l_2+l_3)\times (l_2+l_3)$ matrix $\mathsf{A}:=P_{1,2}^T A(\vec{\theta}_{2,3;o})$.
Now, we define the matrix $B$ on the direct sum space 
${\cal V}_{1,2,3}={\cal V}_{1,2}\oplus {\cal V}_3$ as
\begin{align}
B:= \left(
\begin{array}{cc}
C & 0 \\
(P_3 \mathsf{J} P_3^T)^{-1/2} (P_3 \mathsf{J} P_{1,2}^T) 
& (P_3 \mathsf{J} P_3^T)^{1/2} \\
\end{array}
\right).
\end{align}
So, the map $B$ maps the subspace ${\cal V}_{3}$ to itself.
Also, the matrix $B$ satisfies that
\begin{align}
P_3 BP_3^T &=(P_3 B P_3^T)^T \Label{eq-6-9-6} \\
B \mathsf{J}^{-1} B^T &=I \Label{eq-6-9-5} \\
P_{1,2} B &=
C P_{1,2} \Label{eq-6-9-3}\\
P_3^T P_3 B P_3 &=B P_3 \Label{eq-6-9-2}\\
B^T B &=\mathsf{J} \Label{eq-6-9-4} \\
B^T P_3^T P_3 B &= 
\left(
\begin{array}{cc}
(P_3\mathsf{J}P_{12}^T)^T  (P_3 \mathsf{J} P_3^T)^{-1} P_3 \mathsf{J}
(P_3\mathsf{J}P_{12}^T)
 & (P_3 \mathsf{J} P_{12}^T)^T \\
(P_3 \mathsf{J} P_{12}^T) & (P_3 \mathsf{J} P_3^T)
\end{array}
\right) \nonumber \\
&=\mathsf{J} P_3^T (P_3 \mathsf{J} P_3^T)^{-1} P_3 \mathsf{J}.\Label{eq-6-9-9} 
\end{align}
Also, we have
\begin{align}
&
P_{12}^T P_{12} B^T B P_{12}^T P_{12}
-
P_{12}^T P_{12} B^T P_3^T P_3  B P_{12}^T P_{12}\nonumber \\
= &
\left(
\begin{array}{cc}
C^2+ ((P_3 \mathsf{J} P_3^T)^{-1/2} (P_3 \mathsf{J} P_{1,2}^T) )^T
((P_3 \mathsf{J} P_3^T)^{-1/2} (P_3 \mathsf{J} P_{1,2}^T) )
 & 0 \\
0 & 0
\end{array}
\right) \nonumber \\
&-
\left(
\begin{array}{cc}
((P_3 \mathsf{J} P_3^T)^{-1/2} (P_3 \mathsf{J} P_{1,2}^T) )^T
((P_3 \mathsf{J} P_3^T)^{-1/2} (P_3 \mathsf{J} P_{1,2}^T) )
 & 0 \\
0 & 0
\end{array}
\right) \nonumber \\
=&
\left(
\begin{array}{cc}
C^2 & 0 \\
0 & 0
\end{array}
\right)
=
B^T P_{1,2}^T  P_{1,2} B \Label{2-12-A}.
\end{align}
Then,
\begin{align}
& (C A(\vec{\theta}_{2,3;o}))^{T} (C A(\vec{\theta}_{2,3;o}))
= (C P_{1,2} \mathsf{H}_{0,\vec{\theta}_{2,3}}[\phi] P_{2,3}^T)^{T} 
(C P_{1,2} \mathsf{H}_{0,\vec{\theta}_{2,3}}[\phi] P_{2,3}^T)
\nonumber \\
\stackrel{(a)}{=}
& (P_{1,2} B \mathsf{H}_{0,\vec{\theta}_{2,3}}[\phi] P_{2,3}^T)^{T} 
( P_{1,2} B \mathsf{H}_{0,\vec{\theta}_{2,3}}[\phi] P_{2,3}^T)
\nonumber\\
=&(\mathsf{H}_{0,\vec{\theta}_{2,3}}[\phi] P_{2,3}^T)^T 
(B^T  P_{1,2}^T  P_{1,2} B) (\mathsf{H}_{0,\vec{\theta}_{2,3}}[\phi] P_{2,3}^T )
\Label{01-20-e2} \\
\stackrel{(b)}{=} &
(\mathsf{H}_{0,\vec{\theta}_{2,3}}[\phi] P_{2,3}^T)^T 
(P_{12}^T P_{12} B^T B P_{12}^T P_{12}
-
P_{12}^T P_{12} B^T P_3^T P_3  B P_{12}^T P_{12} )
 \mathsf{H}_{0,\vec{\theta}_{2,3}}[\phi] P_{2,3}^T  \nonumber\\
= &
(P_{12}^T P_{12} \mathsf{H}_{0,\vec{\theta}_{2,3}}[\phi] P_{2,3}^T)^T 
( B^T B - B^T P_3^T P_3  B  )
P_{12}^T P_{12} \mathsf{H}_{0,\vec{\theta}_{2,3}}[\phi] P_{2,3}^T  \nonumber\\
\stackrel{(c)}{=}  &
\mathsf{A}^T
(\mathsf{J} -\mathsf{J} P_3^T (P_3\mathsf{J} P_3^T)^{-1}P_3 \mathsf{J})  
\mathsf{A}
 \nonumber\\
=& \tilde{\mathsf{H}}_{0,\vec{\theta}_{2,3}} 
  ,\Label{01-20-e}
\end{align}
where 
the equation $(a)$ follows from \eqref{eq-6-9-3},
the equation $(b)$ does  from \eqref{2-12-A},
and
the equation $(c)$ does  from \eqref{eq-6-9-4} and \eqref{eq-6-9-9}.

\noindent{\it Step 2 (Proof of Lemma \ref{6-13-thL}):}\quad
Since $C$ is is a full rank matrix on ${\cal V}_{1,2}$,
\eqref{01-20-e} guarantees that
the rank of $\mathsf{A}$ is the same as that of 
$\tilde{\mathsf{H}}_{0,\vec{\theta}_{2,3}} $.
 So, we obtain the desired statement.

\noindent{\it Step 3 (Proof of Theorem \ref{6-13-th}):}\quad
We show the statement with three steps.

\noindent{\it Step 3-1:}\quad
For this purpose, we introduce another parametrization of a transition matrix.
For $\eta \in {\cal V}_{1,2,3}$, we define the transition matrix $W^m_{\vec{\eta}}$
as $W^m_{\vec{\eta}} = W_{\vec{\theta}}$ with the condition $\vec{\eta}=\vec{\eta}(\vec{\theta})$.
Also, we introduce another parametrization
\begin{align}
\vec{\xi}(\vec{\eta}):=
\sqrt{n} B (\vec{\eta}-\vec{\eta}(0,\vec{\theta}_{2,3;o} ) ).
\end{align}
Thus, when $\vec{\eta}=\vec{\eta}(\vec{\theta}) $ and $\vec{\eta}'$ is close to $\vec{\eta}$,
\eqref{eq-6-9-5} implies that
\begin{align}
 2n  D (W^m_{\vec{\eta}}\| W^m_{\vec{\eta}'})
= & n (\vec{\eta}-\vec{\eta}')^T\cdot 
\mathsf{J} \cdot (\vec{\eta}-\vec{\eta}')
+ o(n \|\vec{\eta}-\vec{\eta}'\|^2) \nonumber \\
= &
\|\vec{\xi}(\vec{\eta})-\vec{\xi}(\vec{\eta}')\|^2
+o(\|\vec{\xi}(\vec{\eta})-\vec{\xi}(\vec{\eta}')\|^2).
\Label{2-8-1}
\end{align}

Also, we introduce two vectors
\begin{align}
\vec{\xi}_{1,2} :=& P_{1,2} \vec{\xi}(\vec{Y}_{1,2}^n,0) 
=\sqrt{n} P_{1,2}B (\vec{Y}_{1,2}^n-\vec{\eta}_{1,2}(0,\vec{\theta}_{2,3;o}),0) \nonumber\\
=& \sqrt{n} C (\vec{Y}_{1,2}^n-\vec{\eta}_{1,2}(0,\vec{\theta}_{2,3;o})),
\\
\vec{\xi}_3' := & P_{3} \vec{\xi}(\vec{Y}_{1,2}^n,\vec{\eta}_3') \nonumber \\
= &
\sqrt{n}P_{3}B (\vec{Y}_{1,2}^n-\vec{\eta}_{1,2}(0,\vec{\theta}_{2,3;o}),0)
+ 
\sqrt{n}P_{3} B(0,\vec{\eta}_3'-\vec{\eta}_{3}(0,\vec{\theta}_{2,3;o})) .
\end{align}
Since the map $B$ maps the subspace ${\cal V}_{3}$ to itself,
the vector $\vec{\xi}_3'$ also belongs to ${\cal V}_3$.
Thus,
\begin{align}
\vec{\xi}(\vec{Y}_{1,2}^n,\vec{\eta}_3')
=\vec{\xi}_{1,2}+\vec{\xi}_{3}'.
\end{align}

Further, we divide the subspace ${\cal V}_{1,2}$ into two orthogonal spaces ${\cal V}_4$ and ${\cal V}_5$ such that
${\cal V}_4$ is the image of 
\begin{align}
P_{1,2} B \mathsf{H}_{0,\vec{\theta}_{2,3;o}}[\phi] P_{2,3}^T
=P_{1,2} B P_{1,2}^T P_{1,2}\mathsf{H}_{0,\vec{\theta}_{2,3;o}}[\phi] P_{2,3}^T
=P_{1,2} B \mathsf{A}.\Label{MMG1}
\end{align}
We denote the projection to ${\cal V}_j$ by 
the matrix $P_j$ from ${\cal V}_{1,2}$ to ${\cal V}_j$ for $j=4,5$.
Hence, 
\begin{align}
&P_{2,3} \mathsf{H}_{0,\vec{\theta}_{2,3;o}}[\phi]  B^T
 P_{1,2}^T P_4^T
P_4 P_{1,2} B  \mathsf{H}_{0,\vec{\theta}_{2,3;o}}[\phi] P_{2,3}^T
 \nonumber \\
=&
P_{2,3} \mathsf{H}_{0,\vec{\theta}_{2,3;o}}[\phi]  B^T
 P_{1,2}^T P_{1,2}  B  \mathsf{H}_{0,\vec{\theta}_{2,3;o}}[\phi] P_{2,3}^T.
\Label{MMG2}
\end{align}
Due to the condition {\bf C2}, the dimension of ${\cal V}_4$ is $l_2+l_3$.
So, the dimension of ${\cal V}_5$ is $l_1-l_3$.
Since the input and output spaces of $P_4 P_{1,2} B  \mathsf{H}_{0,\vec{\theta}_{2,3;o}}[\phi] P_{2,3}^T$ have the same dimension 
$l_2+l_3$ and it is surjective, we can consider its inverse matrix.

\noindent{\it Step 3-2:}\quad
We show 
\begin{align}
\sqrt{n}(\hat{\theta}_{2,3}(\vec{Y}_{1,2}^n)-\vec{\theta}_{2,3;o}) \cong
(P_4 P_{1,2} B  \mathsf{H}_{0,\vec{\theta}_{2,3;o}}[\phi] P_{2,3}^T)^{-1} 
P_4 \vec{\xi}_{1,2}.
\Label{2-8-6}
\end{align}

Using 
\begin{align}
\zeta_{2,3}':=\sqrt{n} (\vec{\theta}_{2,3}'-\vec{\theta}_{2,3;o}),
\Label{2-8-8}
\end{align}
we have
\begin{align}
& 2n D(W_{\vec{\theta}(\vec{Y}_{1,2}^n,\vec{\eta}_3')}\|W_{0,\vec{\theta}_{2,3}'})
\cong 
\|\vec{\xi}(\vec{Y}_{1,2}^n,\vec{\eta}_3')-\vec{\xi}(\vec{\eta}(0,\vec{\theta}_{2,3}'))\|^2 \nonumber \\
=&
\|
(\vec{\xi}_{1,2},\vec{\xi}_{3}') - B \mathsf{H}_{0,\vec{\theta}_{2,3;o}}[\phi] \zeta_{2,3}'
\|^2 \nonumber \\
=&
\|\vec{\xi}_{1,2}-P_{12} B \mathsf{H}_{0,\vec{\theta}_{2,3;o}}[\phi] \zeta_{2,3}'\|^2
+\|\vec{\xi}_3'
-P_3 B \mathsf{H}_{0,\vec{\theta}_{2,3;o}}[\phi] \zeta_{2,3}'
\|^2 . \Label{2-8-4}
\end{align}
So, the minimization with respect to $\vec{\eta}_3$ is converted to 
that with respect to $\vec{\xi}_3'$.
That is, the minimum is realized when 
$\vec{\xi}_3'=P_3 B \mathsf{H}_{0,\vec{\theta}_{2,3;o}}[\phi] \zeta_{2,3}'$.

Next, we consider the minimization of the first term
$\|\vec{\xi}_{1,2}-P_{12} B \mathsf{H}_{0,\vec{\theta}_{2,3;o}}[\phi] \zeta_{2,3}'\|^2$. 
The definitions of $P_4$ and $P_5$ yield that
\begin{align}
&\| \vec{\xi}_{1,2}-P_{1,2} B \mathsf{H}_{0,\vec{\theta}_{2,3;o}}[\phi] 
P_{2,3}^T 
\vec{\zeta}_{2,3}'\|^2 
\nonumber \\
=&
\| P_5 \vec{\xi}_{1,2}\|_2
+\| P_4 \vec{\xi}_{1,2}- P_4 P_{1,2} B \mathsf{H}_{0,\vec{\theta}_{2,3;o}}[\phi] P_{2,3}^T \vec{\zeta}_{2,3}'\|^2  \nonumber \\
=&
\| P_5 \vec{\xi}_{1,2}\|_2\nonumber \\
&+\big\| (P_4 P_{1,2} B  \mathsf{H}_{0,\vec{\theta}_{2,3;o}}[\phi] P_{2,3}^T ) 
((P_4 P_{1,2} B  \mathsf{H}_{0,\vec{\theta}_{2,3;o}}[\phi] P_{2,3}^T)^{-1} 
P_4 \vec{\xi}_{1,2}
- \vec{\zeta}_{2,3}')\big\|^2.
\Label{6-13-1}
\end{align}
So, 
\begin{align}
\hat{\zeta}_{2,3}
:= &
\argmin_{\vec{\zeta}_{2,3}'} 
\big\| \vec{\xi}_{1,2}-P_{1,2} B \mathsf{H}_{0,\vec{\theta}_{2,3;o}}[\phi] 
P_{2,3}^T \vec{\zeta}_{2,3}'\big\|^2 \nonumber \\
=&
(P_4 P_{1,2} B  \mathsf{H}_{0,\vec{\theta}_{2,3;o}}[\phi] P_{2,3}^T)^{-1} P_4 \vec{\xi}_{1,2}.
\Label{2-8-3}
\end{align}
Since $
\hat{\theta}_{2,3}(\vec{Y}_{1,2}^n)=
\argmin_{ \vec{\theta}_{2,3}' \in {\cal V}_{2,3}}
\min_{\vec{\eta}_3' \in {\cal V}_3} D(W_{\vec{\theta}(\vec{Y}_{1,2}^n,\vec{\eta}_3)}\|W_{0,\vec{\theta}_{2,3}'})$, 
\eqref{2-8-4} and \eqref{2-8-8}, and
\eqref{2-8-3} guarantees that
\begin{align}
\sqrt{n}(\hat{\theta}_{2,3}(\vec{Y}_{1,2}^n)-\vec{\theta}_{2,3;o})
\cong 
\hat{\zeta}_{2,3},
\end{align}
which implies \eqref{2-8-6} because of \eqref{2-8-3}.

\noindent{\it Step 3-3:}\quad
We show the statement by using \eqref{2-8-6}.
Proposition \ref{P2-9-1} guarantees that the random variable $\sqrt{n}(\vec{Y}_{1,2}^n- \vec{\eta}_{1,2}(0,\vec{\theta}_{2,3;o}))$
asymptotically obeys the Gaussian distribution with the covariance matrix 
$ P_{1,2} \mathsf{H}_{0,\vec{\theta}_{2,3;o}}[\phi] P_{1,2}^T$.
Hence, due to \eqref{eq-6-9-5} and \eqref{eq-6-9-3}, 
the random variable 
$\vec{\xi}_{1,2}$ 
asymptotically obeys the Gaussian distribution with the covariance matrix 
\begin{align}
P_{1,2} B P_{1,2} \mathsf{H}_{0,\vec{\theta}_{2,o}}[\phi] P_{1,2}^T B^T P_{1,2}^T
= P_{1,2} B  \mathsf{J}^{-1} B^T P_{1,2}^T
=P_{1,2} I P_{1,2}^T= P_{1,2} P_{1,2}^T.
\Label{2-8-8B}
\end{align}

Thus, the equation \eqref{2-8-6} guarantees that
the random variable 
$\sqrt{n}(\hat{\theta}_2(\vec{Y}_1^n)-\vec{\theta}_{2,o}) $
asymptotically obeys the Gaussian distribution whose covariance matrix is
\begin{align}
& (P_4 P_{1,2} B  \mathsf{H}_{0,\vec{\theta}_{2,3;o}}[\phi] P_{2,3}^T)^{-1}
((P_4 P_{1,2} B  \mathsf{H}_{0,\vec{\theta}_{2,3;o}}[\phi] P_{2,3}^T)^T)^{-1} \nonumber \\
=&
( (P_4 P_{1,2} B  \mathsf{H}_{0,\vec{\theta}_{2,3;o}}[\phi] P_{2,3}^T)^T
P_4 P_{1,2} B  \mathsf{H}_{0,\vec{\theta}_{2,3;o}}[\phi] P_{2,3}^T
)^{-1} \nonumber \\
=&
( 
P_{2,3} \mathsf{H}_{0,\vec{\theta}_{2,3;o}}[\phi]  B^T
 P_{1,2}^T P_4^T
P_4 P_{1,2} B  \mathsf{H}_{0,\vec{\theta}_{2,3;o}}[\phi] P_{2,3}^T
)^{-1} \nonumber \\
\stackrel{(a)}{=}&
( 
P_{2,3} \mathsf{H}_{0,\vec{\theta}_{2,3;o}}[\phi]  B^T
 P_{1,2}^T P_{1,2}  B  \mathsf{H}_{0,\vec{\theta}_{2,3;o}}[\phi] P_{2,3}^T
)^{-1} \nonumber \\
=&
( ( \mathsf{H}_{0,\vec{\theta}_{2,3;o}}[\phi]  P_{2,3}^T)^T
(B^T P_{1,2}^T P_{1,2}  B ) (\mathsf{H}_{0,\vec{\theta}_{2,3;o}}[\phi] P_{2,3}^T)
)^{-1} 
\stackrel{(b)}{=}
(\tilde{\mathsf{H}}_{0,\vec{\theta}_{2,3}} )^{-1}  ,
\end{align}
where 
the equation $(a)$ follows from \eqref{MMG2}, 
and
the equation $(b)$ does from the equation between \eqref{01-20-e2} and \eqref{01-20-e}.
Thus, we obtain the statement.

\if0

???????
\noindent{\it Step 4:}\quad
The statement (2) can be obtained by taking the average of the matrix
$n (\hat{\theta}(\vec{Y}_{1,2}^{n}) -\vec{\theta}_{1,2;o}) 
(\hat{\theta}(\vec{Y}_{1,2}^{n}) -\vec{\theta}_{1,2;o})^T$ with use of the statement (1).
However, 
Since $n (\hat{\theta}(\vec{Y}_{1,2}^{n}) -\vec{\theta}_{1,2;o}) 
(\hat{\theta}(\vec{Y}_{1,2}^{n}) -\vec{\theta}_{1,2;o})^T$ is not bounded in general,
we need more careful treatment.

For this purpose, we recall the proof method of \cite[Theorem 8.4]{HW-est}.
In the paper \cite[Theorem 8.4]{HW-est},
we separately discuss the case  

we focus on the fact that the probability that
$\| \hat{\theta}(\vec{Y}_{1,2}^{n}) -\vec{\theta}_{1,2;o}\| >\delta$
goes to zero exponentially for any finite $\delta>0$. 
Using this fact, we can show it in the same way as the second statement of 

\fi

\end{proofsof}

\begin{proofof}{Theorem \ref{6-13-thB}}
Now, we prove Theorem \ref{6-13-thB} by using the notations in the proof of Theorem \ref{6-13-th}.
In this setting,
the dimension of ${\cal V}_4$ is $l_2+l[\Theta_{2,3}]$.
So, the dimension of ${\cal V}_5$ is $l_1-l[\Theta_{2,3}]$.
The relations \eqref{2-8-4}, \eqref{6-13-1} and \eqref{2-8-3} guarantee that
\begin{align}
2n \min_{\vec{\theta}_{2,3}' \in \Theta_{2,3}}
\min_{\vec{\eta}_3' \in {\cal V}_3}
D\big(W_{\vec{\theta}(\vec{Y}_{1,2}^n,\vec{\eta}_3')}\big\|W_{0,\vec{\theta}_{2,3}'}\big)
\cong \| P_5 \vec{\xi}_{1,2}\|_2.
\end{align}
Due to \eqref{2-8-8B},
$\vec{\xi}_{1,2}$ is subject to the standard Gaussian distribution.
Since $P_5$ is the projection to the $l_1-l[\Theta_{2,3}]$-dimensional space 
${\cal V}_5$,
the random variable $ \| P_5 \vec{\xi}_{1,2}\|_2$
is subject to the $\chi^2$-distribution with degree $l_1-l[\Theta_{2,3}]$.
Hence, we obtain the desired statement.
\end{proofof}

\section{Perron-Frobenius theorem}\Label{PFTH}
Since we employ Perron-Frobenius theorem in this paper, we review it.
When any component of a matrix $W$ is a non-negative real number,
$W$ is called a positive matrix.
\begin{proposition}[\protect{\cite[Theorem 3.1.]{DZ}\cite{Sen}}]
Let $W = (W_{i,j})$ be an irreducible positive matrix. 
(1) There uniquely exists a positive real number $\lambda>0$ such that 
$\lambda$ is an eigenvalue of $W$ and any other eigenvalue $a$ (possibly, complex) is strictly smaller than r in absolute value, $|a| < \lambda$. 
(2) There exist eigenvectors $v$ and $v'$ of $W$ and $W^T$ corresponding to the eigenvalue $\lambda$
that have strictly positive components.
(3) In addition, these eigenvectors are unique up to a constant multiple.
\end{proposition}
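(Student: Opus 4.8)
The plan is to prove the three assertions in the order (i) existence of $\lambda$ together with a strictly positive eigenvector, (ii) maximality $|a|\le\lambda$ and geometric simplicity, (iii) the strict inequality, leaning throughout on two elementary consequences of irreducibility of the nonnegative matrix $W$ of size $n$: no row or column of $W$ is identically zero, and the matrix $(I+W)^{n-1}$ has all of its entries strictly positive. First I would produce a nonnegative eigenvector by a fixed-point argument. With $\mathbf{1}:=(1,\ldots,1)^T$, consider the map $f(x):=Wx/\langle \mathbf{1}|Wx\rangle$ on the standard simplex $\Delta:=\{x\ge 0:\langle \mathbf{1}|x\rangle=1\}$. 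Since no column of $W$ vanishes, $Wx\neq 0$ for every $x\in\Delta$, so $f$ is a continuous self-map of $\Delta$ and Brouwer's fixed-point theorem yields $v\in\Delta$ with $Wv=\lambda v$, where $\lambda:=\langle \mathbf{1}|Wv\rangle>0$. Strict positivity of $v$ is then automatic: from $(I+W)^{n-1}v=(1+\lambda)^{n-1}v$, the left side is strictly positive because $(I+W)^{n-1}$ has all entries positive and $v\ge 0$, $v\neq 0$, so every coordinate $v_i>0$. Running the identical argument on $W^T$ furnishes a strictly positive left eigenvector $v'$ with $W^Tv'=\mu v'$, which establishes assertion (2).

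Next I would show $\lambda$ is maximal in modulus and coincides with the left eigenvalue. Pairing the two eigenrelations gives $\mu\,{v'}^{T}v={v'}^{T}Wv=\lambda\,{v'}^{T}v$, and since ${v'}^{T}v>0$ we get $\mu=\lambda$. For an arbitrary complex eigenpair $Ww=aw$, the triangle inequality produces the entrywise bound $|a|\,|w|=|Ww|\le W|w|$; pairing against $v'>0$ yields $|a|\,{v'}^{T}|w|\le {v'}^{T}W|w|=\lambda\,{v'}^{T}|w|$, hence $|a|\le\lambda$. Geometric simplicity (assertion (3)) follows by the zeroing trick: given any real eigenvector $u$ of $\lambda$, set $t:=\min_i u_i/v_i$, so that $u-tv\ge 0$ has a vanishing coordinate; but every nonnegative eigenvector of $\lambda$ was just shown to be strictly positive via $(I+W)^{n-1}$, forcing $u-tv=0$, i.e.\ $u$ is a scalar multiple of $v$. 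Since a complex eigenvector for the real eigenvalue $\lambda$ splits into real and imaginary parts, this gives uniqueness up to a constant; applying the same to $W^T$ handles $v'$.

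The step I expect to be the genuine obstacle is the \emph{strict} inequality $|a|<\lambda$ for $a\neq\lambda$ in assertion (1): mere irreducibility is not enough, as the cyclic permutation matrix is irreducible yet has all eigenvalues on the circle $|a|=\lambda$, so the strict bound must invoke aperiodicity (primitivity), i.e.\ the ergodicity hypothesis used elsewhere in the paper. Concretely I would analyze the equality case of $|a|\,|w|\le W|w|$: attaining modulus $\lambda$ forces the phases of the entries of $w$ to be rigidly aligned along the directions prescribed by the positive entries of $W$, and when $W$ is primitive this alignment propagates between every pair of indices, so $w$ must be a phase multiple of $v$ and therefore $a=\lambda$. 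For a merely irreducible $W$ of period $h$ the same analysis instead identifies the peripheral spectrum with $\lambda$ times the $h$-th roots of unity, which is precisely why the strict statement requires the aperiodic refinement; I would flag this as the hypothesis under which the displayed inequality holds as written.
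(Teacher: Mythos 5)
The paper gives no proof of this proposition: it is quoted verbatim from \cite{DZ,Sen} in the appendix purely as background, so there is nothing internal to compare your argument against. Your proof itself is a sound and essentially standard self-contained derivation: Brouwer's fixed point on the simplex for existence, positivity of $(I+W)^{n-1}$ for strict positivity of $v$ and $v'$, pairing against the positive left eigenvector for $\mu=\lambda$ and for $|a|\le\lambda$, and the subtraction trick $u-tv$ for geometric simplicity. All of these steps are correct and complete assertions (2) and (3) and the non-strict half of (1).

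Your flag on the strict inequality is the important point, and you are right to raise it: as stated, with only irreducibility assumed, the claim $|a|<\lambda$ for every other eigenvalue is false, and the paper itself inadvertently supplies the counterexample. In the remark following Lemma 5.3 the author considers $W(x|x')=\delta_{x,x'+1}$ on $\mathbb{Z}_d$, which is irreducible but not ergodic; its eigenvalues are all the $d$-th roots of unity, so the whole spectrum sits on the circle $|a|=\lambda=1$. The correct general statement for irreducible $W$ of period $h$ is that the peripheral spectrum is $\lambda$ times the $h$-th roots of unity, each a simple eigenvalue, and the strict gap $|a|<\lambda$ holds exactly when $W$ is primitive (aperiodic/ergodic). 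Your sketch of the equality-case phase-alignment argument under primitivity is the right way to finish, though to make it airtight you should pass to a power $W^m$ with all entries positive and note that $\lambda^m$ is its Perron root, so that the alignment propagates between \emph{all} pairs of indices rather than only those linked by a positive entry of $W$. In short: no gap in your reasoning, but a genuine (if standard) overstatement in the proposition as transcribed, which matters only at the points where the paper actually needs ergodicity rather than irreducibility.
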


The above eigenvalue $\lambda$ is called the Perron-Frobenius eigenvalue of $W$,
and the above eigenvector of $W$ is called the Perron-Frobenius eigenvector of $W$.

\end{document}